\newcommand{\R}{\mathbb{R}}
\newtheorem{theorem}{Theorem}[section]
\newtheorem{lemma}[theorem]{Lemma}
\newtheorem{proposition}[theorem]{Proposition}
\newtheorem{remark}[theorem]{Remark}
\newtheorem{conjecture}[theorem]{Conjecture}
\newtheorem*{main-theorem}{Main Theorem}
\newtheorem*{remark*}{Remark}
\newtheorem*{lemma*}{Lemma A.1}
\numberwithin{equation}{section}
\begin{document}

\title[Modified fractional Korteweg-de Vries equation]{On the   modified fractional Korteweg-de Vries and related equations}

\author{Christian Klein}

\author{Jean-Claude Saut}

\author{Yuexun Wang}

\address{Institut de Math\'ematiques de Bourgogne,
                Universit\'e de Bourgogne, 9 avenue Alain Savary, 21078 Dijon
                Cedex, France}
    \email{Christian.Klein@u-bourgogne.fr}

\address{ Universit\' e Paris-Saclay, CNRS, Laboratoire de Math\'  ematiques d'Orsay, 91405 Orsay, France.}
\email{jean-claude.saut@universite-paris-saclay.fr}

\address{	
	School of Mathematics and Statistics, 
	Lanzhou University, 370000 Lanzhou, China.}
\address{Universit\' e Paris-Saclay, CNRS, Laboratoire de Math\'  ematiques d'Orsay, 91405 Orsay, France.}

\email{yuexun.wang@universite-paris-saclay.fr}

\thanks{}

\subjclass[2010]{76B15, 76B03, 	35S30, 35A20}
\keywords{modified fKdV, focusing, defocusing, shock formation, global existence, blow-up}

\begin{abstract}  We consider in this paper modified fractional Korteweg-de Vries and related equations (modified Burgers-Hilbert and Whitham). They have the advantage with respect to the usual fractional KdV equation to have a defocusing case with a different dynamics. 
We will  distinguish the weakly dispersive case where the phase 
velocity is unbounded for low frequencies and tends to zero at 
infinity  and the strongly dispersive case where the phase velocity 
vanishes at the origin and goes to infinity at infinity. In the 
former case, the nonlinear hyperbolic effects dominate for large 
data, leading to the possibility of shock formation though the 
dispersive effects manifest for small initial data where scattering 
is possible. In the latter case, finite time blow-up is possible in the focusing case but not the shock formation. In the defocusing case global existence and scattering is expected in the energy subcritical case, while finite time blow-up is expected in the energy supercritical case.

We establish rigorously the existence of shocks with blow-up time and location being explicitly computed in the weakly dispersive case, while most of the results on the strongly dispersive case are derived via numerical simulations, for large solutions. 
Moreover, the shock formation result can be extended to the weakly dispersive equation with some generalized nonlinearity.

We will also comment briefly on the BBM versions of those equations.

\end{abstract}
\maketitle

\section{Introduction}
We consider the modified fractional Korteweg-de Vries (modified fKdV) equation 
\begin{equation}\label{eq:main}
u_t\pm u^2u_x-|D|^\alpha\partial_xu=0,
\end{equation}
where $D=-\mathrm{i}\partial_x$ and hence \(|D|^\alpha\) has Fourier multiplier \(|\xi|^\alpha\). We will distinguish the "weakly dispersive" case $-1\leq \alpha<0$ and the "strongly dispersive case $0<\alpha<2$. (We exclude the case $\alpha=2$ that corresponds to the well-known modified KdV equation.) Note that $\alpha=-1$ corresponds to the modified Burgers-Hilbert equation and $\alpha=1$ to the modified Benjamin-Ono equation.

The $+$ sign will be referred to as the focusing case and the $-$ sign as the defocusing case (actually this distinction is irrelevant when $\alpha <0$).  It is well-known in the weakly dispersive case that 
the symbol \(|D|^\alpha\partial_x\) has the expression 
\begin{equation*}
\begin{aligned}
|D|^\alpha\partial_xf(x)
=c_\alpha\int_\R\frac{\mathrm{sgn}(y)}{|y|^{2+\alpha}}[f(x)-f(x-y)]\, d y,
\end{aligned}
\end{equation*}	
where \(c_\alpha\) is a positive constant only depending on \(\alpha\), which will be regarded as \(1\) for simplicity.

The motivation of the present paper is, as in previous works 
concerning perturbations of the Burgers equation  \cite{EW1, EW2, KLPS, KS, 
LPS2, MPV, SW1} to study the influence of a relatively weak 
dispersive perturbation on the dynamics of  a quasilinear hyperbolic 
equation. Do the hyperbolic properties (appearance of shocks, global entropy
weak  solutions) persist or on the contrary do dispersive effects dominate? Of course both those properties might persist in the same equation and conversely depending on the size of the initial data.

The situation is quite different for $-1<\alpha<0$ and $0<\alpha\leq 1$ as can be seen by looking at the phase velocity $c(\xi)=|\xi|^\alpha$ which is unbounded for low frequencies and goes to zero for large frequencies in the former case. One thus expects that the nonlinear hyperbolic aspects dominate (for large solutions) in the case $\alpha<0$. 

On the other hand dispersive effects appear for small solutions and actually in the case $-1<\alpha<0$ the global existence and modified scattering for \eqref{eq:main} with small initial data was studied  in \cite{SW1} leading to the following result in the focusing case: \footnote{A similar Proposition holds in the defocusing case with a slightly different formulation.}

\begin{proposition}[\cite{SW1}]\label{th:previous} Let \(\alpha\in(-1,0)\).  Define the
	  profile 
	\[f(t,x)=e^{-t|D|^{\alpha} \partial_x}u(t,x),\]
and	the \(Z\)-norm 
	\[\|g\|_Z=\|(1+|\xi|)^{10}\widehat{g}(\xi)\|_{L^\infty_\xi}.\]	 
	Assume that \(N_0=100,\ p_0\in (0,\frac{1}{1000}]\cap (0,-\frac{\alpha}{100}] \) are fixed, and \(\phi\in H^{N_0}(\mathbb{R})\) satisfies
	\begin{align*}
	\|\phi\|_{H^{N_0}}+\|\phi\|_{H^{1,1}}+\|\phi\|_Z=\varepsilon_0\leq \bar{\varepsilon},
	\end{align*}
	for some constant \(\bar{\varepsilon}\) sufficiently small (depending only on \(\alpha\) and \(p_0\)). Then the Cauchy problem of the equation \eqref{eq:main} with the initial data \(u(0,x)=\phi(x)\) admits a unique
	global solution \(u\in C(\mathbb{R}: H^{N_0}(\mathbb{R}))\) satisfying the following uniform bounds for \(t\geq 1\)
	\begin{align*}
	t^{-p_0}\|u\|_{H^{N_0}}+t^{-p_0}\|f\|_{H^{1,1}}+\|f\|_Z\lesssim \varepsilon_0.
	\end{align*}
	Moreover,  there exists \(w_\infty\in L^\infty(\mathbb{R})\) such that  for \(t\geq 1\)
	\begin{align*}
	t^{p_0}\left\|\exp\left(\frac{3\mathrm{i}\xi|\xi|^{1-\alpha}}{\alpha(\alpha+1)}\int_1^t|\widehat{f}(s,\xi)|^2\,\frac{d s}{s}\right)(1+|\xi|)^{10}\widehat{f}(\xi)-w_\infty(\xi)\right\|_{L^\infty_\xi}\lesssim \varepsilon_0.
	\end{align*}
\end{proposition}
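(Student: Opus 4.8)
The plan is to prove this by a bootstrap argument built on three quantities — the high Sobolev energy $\|u\|_{H^{N_0}}$, the weighted norm $\|f\|_{H^{1,1}}$ of the profile, and the $Z$-norm $\|f\|_Z$ — together with an analysis of the trilinear oscillatory integral that governs the profile near its space-time resonances. First I would record the local theory: since the nonlinearity is in conservation form, $\pm u^2u_x=\pm\tfrac13\partial_x(u^3)$, standard energy estimates give local well-posedness in $H^{N_0}$ and a continuation criterion, so global existence follows once the bootstrap quantities are shown to remain finite. Writing $L=|D|^\alpha\partial_x$ and $u=e^{tL}f$, the profile solves the exact identity
\[\partial_t\widehat f(t,\xi)=\mp\frac{\I\xi}{3}\int_{\R^2}e^{\I t\Phi}\,\widehat f(\eta_1)\widehat f(\eta_2)\widehat f(\xi-\eta_1-\eta_2)\,d\eta_1\,d\eta_2,\]
\[\Phi(\xi,\eta_1,\eta_2)=-\xi|\xi|^\alpha+\eta_1|\eta_1|^\alpha+\eta_2|\eta_2|^\alpha+\eta_3|\eta_3|^\alpha,\qquad \eta_3:=\xi-\eta_1-\eta_2,\]
on which everything rests.

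I would then set up the bootstrap: assuming on $[1,T]$ that $t^{-p_0}\|u\|_{H^{N_0}}+t^{-p_0}\|f\|_{H^{1,1}}+\|f\|_Z\le\varepsilon_1$ for a suitable $\varepsilon_1$, I would improve each bound. For the energy, the conservation form lets me integrate by parts so that no differentiated factor of $u$ is left unbounded; combined with the linear dispersive decay $\|u\|_{L^\infty}\lesssim\varepsilon_1 t^{-1/2}$ (with the $|\xi|$-weights that the $Z$-norm supplies) this gives $\frac{d}{dt}\|u\|_{H^{N_0}}^2\lesssim\varepsilon_1^2t^{-1}\|u\|_{H^{N_0}}^2$, hence $\|u\|_{H^{N_0}}\lesssim\varepsilon_0 t^{C\varepsilon_1^2}\le\varepsilon_0 t^{p_0}$ once $\varepsilon_0$ is small relative to $p_0$. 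For the weighted bound I would differentiate the Duhamel formula in $\xi$; the dangerous terms are those in which $\partial_\xi$ falls on the phase, producing a factor of $t$, and these I would absorb by integrating by parts in $(\eta_1,\eta_2)$, exploiting that $\nabla_\eta\Phi=(1+\alpha)\bigl(|\eta_1|^\alpha-|\eta_3|^\alpha,\,|\eta_2|^\alpha-|\eta_3|^\alpha\bigr)$ is nondegenerate off the resonant set, where a direct estimate using the $L^\infty$-based $Z$-norm suffices.

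The core is the $Z$-norm estimate, which simultaneously closes the bootstrap and produces the modified scattering. Here I would localize the trilinear integral to a neighbourhood of the space-time resonant set, which for this phase is $\{|\eta_1|=|\eta_2|=|\eta_3|=|\xi|\}$ with the sign pattern $\eta_2=-\eta_3$ (and its permutations) forcing $\Phi=0$. A two-dimensional stationary-phase expansion in $(\eta_1,\eta_2)$ extracts from these points the leading contribution
\[\partial_t\widehat f(t,\xi)=-\frac{3\I\xi|\xi|^{1-\alpha}}{\alpha(\alpha+1)}\frac1t\,|\widehat f(t,\xi)|^2\,\widehat f(t,\xi)+\mathcal E(t,\xi),\]
where the coefficient is exactly the inverse square root of the Hessian determinant of $\Phi$ (of size $|\xi|^{\alpha-1}$) times the prefactor $\mp\I\xi/3$, so that the power $\xi|\xi|^{1-\alpha}$ appears precisely as in the statement. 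Because the leading term is a purely imaginary multiple of $\widehat f$, one has $\partial_t|\widehat f|^2=2\,\mathrm{Re}(\overline{\widehat f}\,\mathcal E)$, so $\|f\|_Z$ is almost conserved and the bootstrap closes. Setting $\theta(t,\xi)=\frac{3\xi|\xi|^{1-\alpha}}{\alpha(\alpha+1)}\int_1^t|\widehat f(s,\xi)|^2\frac{ds}{s}$, the phase-corrected profile $e^{\I\theta}(1+|\xi|)^{10}\widehat f$ has time derivative $e^{\I\theta}(1+|\xi|)^{10}\mathcal E$; showing $\|\mathcal E\|_Z\lesssim\varepsilon_0^3 t^{-1-p_0}$ then yields both the Cauchy criterion producing $w_\infty\in L^\infty(\R)$ and the stated convergence rate $t^{-p_0}$.

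The main obstacle is the control of the error $\mathcal E$ in this $Z$-norm estimate. One must bound, uniformly in $\xi$ and integrably in time, the non-resonant and partially resonant parts of the trilinear integral together with the stationary-phase remainder, and this is delicate for two reasons special to $\alpha<0$: the multiplier $|\xi|^\alpha$ is singular at the origin, so the low-frequency interactions require separate care, and the weak dispersion gives only a borderline $t^{-1/2}$ decay per factor, so the $1/t$ threshold is barely beaten. The gain is bought precisely by the weighted $H^{1,1}$ control of $\partial_\xi\widehat f$ — used to integrate by parts off the resonant set — together with a careful Littlewood--Paley decomposition in all frequencies, and it is the interplay of these two controls, rather than any single estimate, that forms the heart of the argument.
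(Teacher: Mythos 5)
This proposition is not proved anywhere in the present paper: it is quoted, as a known result, from the reference \cite{SW1}, so there is no internal proof to compare your sketch against. Your proposal correctly reconstructs the strategy of that cited work --- a bootstrap on $\|u\|_{H^{N_0}}$, $\|f\|_{H^{1,1}}$ and $\|f\|_{Z}$, local theory plus energy estimates in conservation form, integration by parts in frequency for the weighted norm, and a stationary-phase analysis of the cubic oscillatory integral at the space-time resonances $|\eta_1|=|\eta_2|=|\eta_3|=|\xi|$ producing the asymptotic ODE whose purely imaginary coefficient yields the logarithmic phase correction and modified scattering --- so it is essentially the same approach as the source, and the structural details you give (the phase $\Phi$, its $\eta$-gradient, the $\xi|\xi|^{1-\alpha}$ coefficient from the degenerate Hessian) are consistent with the statement.
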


One aim of the present paper is to prove that    the solutions of the equation \eqref{eq:main} can form shocks for large initial data. 
We recall that the fKdV equation 
\begin{equation}\label{eq:fKdV}
u_t+uu_x-|D|^\alpha u_x=0
\end{equation}
can form shocks for large solutions in the range \(\alpha\in(-1,-1/3)\) \cite{HT, Hur, SW2}. \footnote{It is very likely that this result holds in the case $-1/3\leq\alpha<0$ but 
this is still unproven.} We 
will show a similar shock formation result holds true for the equation \eqref{eq:main}, but for the whole range  \(\alpha\in(-1,0)\) \footnote{For \(\alpha=-1\), the equation \eqref{eq:main} can be regarded as a modified Burgers-Hilbert equation which is not dispersive although one can extend the shock formation result (Theorem \ref{th:main}) to this case (Theorem \ref{th:mBH}).  The Burgers-Hilbert equation was introduced in \cite{BH} as a model for waves with constant nonzero linearized frequency.} .

On the other hand, when $\alpha>0$ the dispersive effects play a more important role and although finite time blow-up is expected, it should not be shock formation (see for the quadratic case  the numerics in \cite{KS} and the results on the local Cauchy problem in \cite{LPS2, MPV, Men} where the dispersive properties are used to enlarge the space of resolution). 

We go back to the cubic case and first  comment  on the case $\alpha>0.$ In addition to the conservation of the $L^2$ norm (mass), one has the (Hamiltonian) formally conserved quantity
$$H_\alpha(u)=\frac{1}{2}\int_\R\left(|D^{\alpha/2} u|^2\mp \frac{1}{6}u^4\right)\, dx,$$
which by the Sobolev embedding $H^{1/4}(\R)\subset L^4(\R)$ implies that $\alpha=1/2$ is the energy critical exponent. On the other hand,  \eqref{eq:main} is invariant under the scaling transformation $u_\lambda(x)=\lambda^{\alpha/2} u(\lambda x, \lambda^{\alpha +1}t)$ which implies that $\alpha=1$ is the $L^2$ critical exponent.  

In the defocusing  case and when $\alpha>0$ one has  a formal conservation of the energy space  $H^{\alpha/2}(\R)$.

Thus one expects in the focusing case global well-posedness in the energy space when $\alpha>1$ and finite time blow-up when $0<\alpha\leq 1.$ Note that the case $\alpha=1$ corresponds to the modified Benjamin-Ono equation and the finite time blow-up in the focusing case has been proved by Martel and Pilod\cite{MP}. The blow-up should not be a shock (the sup-norm of the solution and of the derivative should blow up at the same time) but its structure should be different in the energy super critical case $0<\alpha<1/2$ and in the $L^2-$ critical case $1/2\leq \alpha <1$. Again we refer to \cite {KS} for numerical simulations in the quadratic case. 

Concerning the local Cauchy problem in "large" Sobolev spaces (that is larger than the "hyperbolic space" $H^{3/2 +}(\R)$) we are not aware of results similar to those  in \cite{LPS2, MPV} corresponding to the quadratic space except when $\alpha =1$ (the modified Benjamin-Ono equation considered in \cite{KK, KT}). In particular it is proven in \cite{KT} that the Cauchy problem for the focusing and defocusing modified Benjamin-Ono equation is locally well-posed in $H^s(\R), s\geq 1/2$ and thus globally well-posed in the same range in the defocusing case.
(The local well-posedness in $H^1(\R)$ was proven in \cite{KK}).

In the defocusing case, while one expects global well-posedness (and scattering) in the energy subcritical case $\alpha\geq 1/2$, things are unclear in the energy supercritical case and one aim of this paper is to present relevant conjectures.

This is in contrast  to the case $\alpha<0$ where the hyperbolic effects dominate for large solutions and the distinction between focusing and defocusing becomes irrelevant.

The paper is organized as follows: The first  Section is devoted to 
the statement and the proof of the main result Theorem \ref{th:main} in the case 
$-1<\alpha<0$, that is the possibility of shocks. In the next two Sections we extend the shock formation result to the modified Burgers-Hilbert and Whitham equations, and the fractional Korteweg-de Vries equation  with some generalized nonlinearity. Then we focus on the case $\alpha >0$, 
and consider successively the solitary wave solutions and the Cauchy problem in the focusing and defocusing case. Most issues will lead to conjectures illustrated by numerical simulations.
 
 We conclude the paper by some remarks on the "BBM" version of the modified fKdV equation when $\alpha>0.$
  
\section{The case $-1<\alpha<0.$}
\subsection{Main result}
 We say that the solution of \eqref{eq:main}  exhibits shock formation
 if there exists some \(T>0\) such that
 \begin{equation*}
 \begin{aligned}
 |u(x,t)|<\infty,\quad x\in\R,\ t\in[0,T),
 \end{aligned}
 \end{equation*}	
 but	
 \begin{equation*}
 \begin{aligned}
 \sup_{x\in\R}|\partial_xu(x,t)|\longrightarrow +\infty,\quad \text{as}\ t\longrightarrow T-.
 \end{aligned}
 \end{equation*}	
 
 The main result of this Section is stated as follows in the focusing case \footnote{A similar shock formation result holds with slight modifications in the defocusing case.}: 
 
 \begin{theorem}\label{th:main} \textup{(Rough version)} Let  \(\alpha\in(-1,0)\). 
 	There exists a wide class of functions \(\phi\in H^3(\R)\) with appropriate large positive amplitude \(\phi\) and negative slope \(\inf_{x\in\R}\phi^\prime(x)\) such that the Cauchy problem for the equation \eqref{eq:main} with initial data \(u(0,x)=\phi(x)\) exhibits shock formation. 
 	
 	\textup{(Precise version)} Let \(\alpha\in(-1,0)\) and \(\delta\) be a sufficiently small positive number. Assume \(\bar{x}_1\) and \(\bar{x}_2\) are the largest and smallest numbers such that
 	\(\overline{\{x: \phi^\prime(x)<0\}}\subset [\bar{x}_1,\bar{x}_2] \).
 	If \(\phi\in H^3(\R)\) satisfies the \textbf{slope condition}
 	\begin{align}
 	&-\inf_{x\in\R}\phi^\prime(x)>\delta^{-1}f_1,\label{c1}\\
 	&-\inf_{x\in\R}\phi^\prime(x)>(1-\delta)^{-2}f_2,\label{c2}\\
 	&-\inf_{x\in\R}\phi^\prime(x)>(1-\delta)^{-3}f_3,\label{c3}
 	\end{align}
 	and  the \textbf{local amplitude condition}
 	\begin{align}
 	&\phi(x)< B-(1-\delta)^{-2}f_4,\label{c4}\\
 	&\phi(x)> A+(1-\delta)^{-2}f_4, \label{c5}
 	\end{align}
 	for all \(x\in [\bar{x}_1,\bar{x}_2]\).
 	Here the functions \((f_1,f_2,f_3,f_4)\) are homogeneous in each argument of order \((1/2,0,0,0)\), and have the following explicit formulae	
 	\begin{equation*}
 	\begin{aligned}
 	&\quad f_1=:f_1(\|\phi\|_{H^2},C_1,\|\phi^{\prime\prime\prime}\|_{L^2})\\
 	&=\big[C_s\|\phi\|_{H^2}+4C_1(1+\alpha)^{-1}
 	+2C_s(-\alpha)^{-1}(2^{-1}AB^{-1})^{-7BA^{-2}/4}
 	\|\phi^{\prime\prime\prime}\|_{L^2}\big]^{1/2},
 	\end{aligned}
 	\end{equation*}
 	\begin{equation*}
 	\begin{aligned}
 	f_2=:f_2(C_0^{-1}C_1)=8(-\alpha(1+\alpha))^{-1}+4\alpha^{-2}C_0^{-1}C_1,
 	\end{aligned}
 	\end{equation*}
 	\begin{equation*}
 	\begin{aligned}
 	&\quad f_3=:f_3(C_1^{-1}\|\phi^{\prime\prime\prime}\|_{L^2})\\
 	&=8(1+\alpha)^{-1}+4C_s(-\alpha)^{-1}(2^{-1}AB^{-1})^{-7BA^{-2}/4}
 	C_1^{-1}\|\phi^{\prime\prime\prime}\|_{L^2},
 	\end{aligned}
 	\end{equation*}
 	\begin{equation*}
 	\begin{aligned}
 	&\quad f_4=:f_4\big(C_0\big(-\inf_{x\in\R}\phi^\prime(x)\big)^{-1},C_1\big(-\inf_{x\in\R}\phi^\prime(x)\big)^{-1}\big)\\
 	&=\big(-\inf_{x\in\R}\phi^\prime(x)\big)^{-1}\big[4C_0\big(-\alpha(1+\alpha)\big)^{-1}+2C_1\alpha^{-2}\big],
 	\end{aligned}
 	\end{equation*}
 	where \(C_0>0\) and \(C_1>0\) satisfying	
 	\begin{equation}\label{c6}
 	\begin{aligned}
 	\|\phi\|_{L^\infty}\leq \frac{C_0}{2},\quad 
 	\|\phi^\prime\|_{L^\infty}\leq \frac{C_1}{2},
 	\end{aligned}
 	\end{equation}	
 	and \(A>0\) and \(B>0\) satisfying
 	\begin{equation}\label{c7}
 	\begin{aligned}
 	A^2>8\delta B^2,\quad 8A^{2}>7B,\quad B> A+2(1-\delta)^{-2}f_4,
 	\end{aligned}
 	\end{equation}
 	and \(C_s\) is the best embedding constant of Sobolev inequality
 	\begin{equation*}
 	\begin{aligned}
 	\|f\|_{L^\infty(\R)}\leq C_s\|f\|_{H^1(\R)}.
 	\end{aligned}
 	\end{equation*}   
 	Then the solution for the equation \eqref{eq:main} with initial data \(u(0,x)=\phi(x)\) exhibits shock formation at some time \(T>0\) satisfying
 	\begin{equation}\label{shock time}
 	\begin{aligned}
 	&(2B+\delta)^{-1}\big(-\inf_{x\in\R}\phi^\prime(x)\big)^{-1}
 	<T\\
 	&<(AB^{-1}-\delta)^{-1}(2A-\delta)^{-1}\big(-\inf_{x\in\R}\phi^\prime(x)\big)^{-1},
 	\end{aligned}
 	\end{equation}
 	and at some location \(x_*\) satisfying
 	\begin{align}\label{shock location}
 	\bar{x}_1-C_0^2T
 	\leq x_*\leq \bar{x}_2+C_0^2T.
 	\end{align}
 	Moreover, we have the blow-up rate estimate
 	\begin{align}\label{shock blow-up rate}
 	(2B+\delta)^{-1}(T-t)^{-1}\leq\|\partial_xu(\cdot,t)\|_{L^\infty}\leq (AB^{-1}-\delta)^{-1}(2A-\delta)^{-1}(T-t)^{-1}, 
 	\end{align}
 	as \(t\rightarrow T^{-}\).	
 \end{theorem}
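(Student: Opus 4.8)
The plan is to reduce the assertion to a scalar Riccati-type differential inequality for the most negative slope and to show that the weakly dispersive operator only perturbs the underlying Burgers mechanism in a subordinate way. Write $\mathcal{L}=|D|^\alpha\partial_x$ and recall its singular-integral representation from the Introduction. Assuming local well-posedness of \eqref{eq:main} in $H^3(\R)$ (which follows from standard quasilinear theory, since $\mathcal{L}$ has order $1+\alpha<1$ and is thus lower order than the transport term $u^2\partial_x$), the solution is $C^2$ in $x$, and it suffices to propagate a priori bounds up to the first time the slope becomes infinite. Let $\beta(t)=\inf_x u_x(\cdot,t)<0$, attained at some $\xi(t)$, and set $m=-\beta>0$, so $m(0)=-\inf\phi'$. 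Differentiating \eqref{eq:main} once and evaluating at the minimum, where $u_{xx}(\xi)=0$ and $\partial_x u_x(\xi)=0$, yields the exact identity $\dot m = 2u(\xi,t)\,m^2-(\mathcal{L}u_x)(\xi,t)$ (modulo the standard envelope/Dini-derivative technicality for $\inf u_x$, which I would address but not dwell on). Since $u>0$, the first term is a pure Riccati blow-up mechanism, and the entire proof reduces to showing that $u(\xi,t)$ stays trapped in $[A,B]$ while $(\mathcal{L}u_x)(\xi,t)$ stays negligible against $m^2$ throughout the (short) life span.

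Next I would establish the two a priori inputs. For the amplitude, along characteristics $\dot x=u^2$ one has $\tfrac{d}{dt}u=\mathcal{L}u$, so the amplitude drifts by at most $\int_0^t\abs{\mathcal{L}u}\,\diff s$. Splitting the defining integral of $\mathcal{L}u$ at $\abs{y}=1$ and using $\norm{u}_{L^\infty}\le C_0/2$, $\norm{u_x}_{L^\infty}\le C_1/2$ bounds $\abs{\mathcal{L}u}$ by a constant multiple of $C_1(-\alpha)^{-1}+C_0(1+\alpha)^{-1}$; multiplying by the life span $\approx m(0)^{-1}$ produces exactly the correction $f_4$, so the local amplitude conditions \eqref{c4}--\eqref{c5} are precisely what keeps $u(\xi(t),t)\in(A,B)$. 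In parallel one needs an $H^3$ bound. Standard energy estimates give $\tfrac{d}{dt}\norm{u}_{H^3}^2\lesssim \norm{u_x}_{L^\infty}\norm{u}_{H^3}^2$, but since $\norm{u_x}_{L^\infty}\sim m\sim(T-t)^{-1}$ is not time-integrable, a naive Grönwall fails; a weighted estimate that feeds in the upper Riccati rate $\norm{u_x}_{L^\infty}\le(AB^{-1}-\delta)^{-1}(2A-\delta)^{-1}(T-t)^{-1}$ converts the log-divergent integral into the explicit algebraic growth factor $(2^{-1}AB^{-1})^{-7BA^{-2}/4}$ visible in $f_1$ and $f_3$.

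The decisive step is bounding $(\mathcal{L}u_x)(\xi,t)$ at the minimum. Using oddness of the kernel, $(\mathcal{L}u_x)(\xi)=\int_0^\infty y^{-2-\alpha}\,[u_x(\xi+y)-u_x(\xi-y)]\,\diff y$. Splitting at $y=1$, the far field is controlled by $2\norm{u_x}_{L^\infty}\int_1^\infty y^{-2-\alpha}\,\diff y=\tfrac{2}{1+\alpha}\norm{u_x}_{L^\infty}$, which accounts for the $(1+\alpha)^{-1}$ factors. In the near field the vanishing of $u_{xx}(\xi)$ is essential: writing $u_x(\xi+y)-u_x(\xi-y)=\int_{-y}^{y}[u_{xx}(\xi+s)-u_{xx}(\xi)]\,\diff s$, one bounds this by $2y\norm{u_{xx}}_{L^\infty}$, contributing $\tfrac{2}{-\alpha}\norm{u_{xx}}_{L^\infty}$, and then the Sobolev inequality $\norm{u_{xx}}_{L^\infty}\le C_s(\norm{u}_{H^2}+\norm{u_{xxx}}_{L^2})$ produces exactly the combination appearing in $f_1$. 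The outcome is a bound of the schematic form $\abs{(\mathcal{L}u_x)(\xi,t)}\le c_1\norm{u_x}_{L^\infty}+c_2$ — at worst linear in $m$ plus a data-controlled constant — which is genuinely subordinate to the quadratic $2Am^2$.

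Finally, inserting $A\le u(\xi,t)\le B$ and the nonlocal bound into $\dot m=2u(\xi)m^2-(\mathcal{L}u_x)(\xi)$ sandwiches $m$ between two Riccati equations $\dot m\ge(2A-\cdots)m^2$ and $\dot m\le(2B+\cdots)m^2$; the three slope conditions \eqref{c1}--\eqref{c3} are calibrated so that the linear-in-$m$ and constant errors are absorbed into the $\delta$-losses, leaving effective coefficients $(AB^{-1}-\delta)(2A-\delta)$ and $(2B+\delta)$. Comparison then forces $m(t)\to\infty$ at a finite $T$ trapped as in \eqref{shock time}, with the matching rate \eqref{shock blow-up rate}; as $u$ remains bounded this is a genuine shock, and integrating $\dot\xi=u^2\le C_0^2$ over $[0,T]$ places the blow-up point within $C_0^2T$ of $[\bar x_1,\bar x_2]$, giving \eqref{shock location}. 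I would run the whole argument as a continuity/bootstrap on $[0,T)$, and the hard part is precisely closing it quantitatively: the nonlocal error and the $H^3$ norm both feed on $\norm{u_x}_{L^\infty}$, which is itself blowing up, so one must verify that the weighted energy estimate together with \eqref{c1}--\eqref{c7} leaves a strictly positive margin (the $\delta$- and $(1-\delta)$-powers) all the way up to $T$. This simultaneous, self-consistent control — rather than any single estimate — is the main obstacle.
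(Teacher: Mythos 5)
Your skeleton is the paper's own: a Riccati inequality for the most negative slope transported by $\dot X=u^{2}$, trapping of the amplitude in $[A,B]$, control of the nonlocal terms, and a simultaneous bootstrap. But the decisive quantitative steps fail as written, and both failures trace to the one idea your sketch is missing: in the nonlocal estimates the splitting radius must be taken $q$-dependent (the paper uses $\eta_{1}=q(t)$ in \eqref{10} and $\eta_{2}=q(t)^{-1+7BA^{-2}/4}$ in \eqref{12}, where $q(t)=m(0)/m(t)$), and the resulting powers of $q$ must then be integrated in time via the comparison estimates \eqref{a7}--\eqref{a8} of Lemma \ref{le:a2}. Concretely, for the amplitude drift you bound the nonlocal term by a constant multiple of $C_{1}(-\alpha)^{-1}+C_{0}(1+\alpha)^{-1}$ using $\|u_{x}\|_{L^{\infty}}\le C_{1}/2$; that slope bound holds only at $t=0$, since $\|u_{x}\|_{L^{\infty}}$ blows up like $(T-t)^{-1}$ by the very conclusion being proved. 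Replacing it by the bootstrap bound $\|u_{x}\|_{L^{\infty}}\le C_{1}q^{-1}(t)$ while keeping your fixed splitting at $|y|=1$ gives a drift of order $\int_{0}^{t}q^{-1}(\tau)\,d\tau$, which diverges logarithmically as $q\to 0$; the trapping $A\le u\le B$ then cannot be closed, and with it the coefficients $2A,2B$ in \eqref{shock time} and \eqref{shock blow-up rate} are lost. The paper's estimate \eqref{10} exists precisely to avoid this: the shrinking radius $\eta_{1}=q(t)$ yields $|K_{0}|\lesssim q^{-(1+\alpha)}$, and because $1+\alpha<1$ the integral \eqref{a7} with $s=1+\alpha$ stays \emph{bounded}, of size $|m(0)|^{-1}$ --- which is exactly where $f_{4}$ and its factors $(-\alpha(1+\alpha))^{-1},\ \alpha^{-2}$ come from. (Contrast with the $H^{3}$ energy estimate, where the same logarithmically divergent integral is harmless because it sits inside an exponential and only produces algebraic growth.)

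Your ``decisive step'' is also misstated: the claimed bound $|\mathcal{L}u_{x}(\xi,t)|\le c_{1}\|u_{x}\|_{L^{\infty}}+c_{2}$ with $c_{2}$ a data-controlled constant is false, because the near-field piece is of size $\|u_{xx}\|_{L^{\infty}}$, which is \emph{not} controlled by the data: by the very weighted energy estimate you invoke, it grows like $q(t)^{-7BA^{-2}/4}$, an exponent that may well exceed $1$ (it is only guaranteed to be $<2$, by $8A^{2}>7B$ in \eqref{c7}). So the nonlocal error is not ``linear in $m$ plus a constant''; the correct closure, as in \eqref{12}, is a genuinely quadratic bound $|K_{1}|\lesssim (\mathrm{data})\,q^{-2}$, and its subordination to $2Am^{2}$ comes not from subquadratic growth but from smallness of the coefficient, $(\mathrm{data})/m(0)^{2}<\delta^{2}$, which is exactly what the slope condition \eqref{c1} purchases. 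Finally, a smaller but real omission: your Riccati identity at the moving minimum $\xi(t)$ needs $u(\xi(t),t)\in[A,B]$, yet the amplitude conditions \eqref{c4}--\eqref{c5} are imposed only on $[\bar x_{1},\bar x_{2}]$; you never argue that the minimizer stays on characteristics emanating from that interval. The paper secures this through the monotonicity of the set $\Sigma_{\delta}(t)$ (Lemma \ref{le:a1}), which has no counterpart in your sketch and also underlies the integral estimates of Lemma \ref{le:a2}; without it, neither the amplitude trapping nor the two-sided Riccati comparison giving \eqref{shock time} and \eqref{shock blow-up rate} is justified.
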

 
 Compared to the fKdV equation, the additional price for 
 \eqref{eq:main} that one shall pay is not only to assume the negative 
 slope \(\phi^\prime\) is appropriately large but also \(\phi\) itself. The reason lies in that the nonlinear term \(2v_0v_1^2\) (which is expected to be the leading term) of the equation \eqref{eq:main} in particle path form (see \eqref{3.2}) has no fixed sign generally. To make sure that \(v_0(t,x)=u(t,X(t,x))\) (see \eqref{0}) has a positive lower bound, we need to impose some kind of conditions such as  \eqref{c4}-\eqref{c5} on \(\phi\), and then verify that \(v_0(t,x)\) is bounded below by a positive constant before the shock comes.

 \begin{remark}
 	There exists a wide class of functions \(\phi\in H^3(\R)\) 
	satisfying  \eqref{c1}-\eqref{c5} in Theorem \ref{th:main}. This 
	can be seen easily from the orders of \(\phi\) in  both sides of 
	each inequality by homogeneities of the functions 
	\((f_1,f_2,f_3,f_4)\) on each of its arguments. Indeed, the left hand side of \eqref{c1}-\eqref{c5} has  one more order on \(\phi\) than that of the right hand side correspondingly, if the original \(\phi\) does not work, then one can replace it by \(\lambda \phi\) with a sufficiently large \(\lambda>0\). 
 \end{remark}
 
 \begin{remark}
 	There exists an open neighborhood in the \(H^3\)-topology of the set of initial data satisfying the hypotheses in Theorem \ref{th:main} such that the conclusions of Theorem \ref{th:main} hold. This is an obvious fact since the inequalities \eqref{c1}-\eqref{c5} are stable for small perturbations. So Theorem \ref{th:main} contains the following precise information on the shock:\\
 	(a1) shock time; \\
 	(a2) shock location;\\
 	(a3) shock blow-up rate;\\
 	(a4) openness for initial data of producing shock.
 \end{remark}

 \begin{remark}
 	We emphasize that in the local amplitude condition 
	\eqref{c4}-\eqref{c5}, the interval \( [\bar{x}_1,\bar{x}_2]\) 
	can be replaced by a larger but finite interval. Otherwise, the local amplitude condition will become a global one, which means \(\phi\notin L^2(\R)\) and thus contradicts \(\phi\in H^3(\R)\). More importantly, if the latter case happened, then \(\phi\) is positive on the entire line which is physically irrelevant since \(\phi\) stands for the initial elevation. 
 \end{remark}

 \begin{remark}
 	We mention here previous works on the finite time blow-up 
	phenomena for related weakly dispersive nonlinear equations. 
	Naumkin and Shishmarev \cite{NS}, and Constantin and Escher 
	\cite{CE} have proven shock formation for a Whitham type equation  which, however, does not include the 
 	Whitham equation considered in this paper. 
 	We now consider the fKdV equation \eqref{eq:fKdV} for \(-1<\alpha<0\).
 	The finite time blow-up of a $C^{1+\delta}(\R)$ norm was established 
 	in \cite{CCG} but the shock formation was not proven there. The 
 	possibility of appearance of shocks was proven in \cite{HT, Hur} when $-1<\alpha<-1/3$ and for the Whitham equation. A simpler proof, applying to the case $-1<\alpha <-2/5$ and the Whitham equation was given in \cite{SW2}.  
 \end{remark}

\subsection{Proof of Theorem \ref{th:main}}

	It is standard (see e.g., \cite{JCS}) to show that the Cauchy problem for the equation \eqref{eq:main} with initial data \(u(0,x)=\phi(x)\) is well-posed in the class \(C([0,T): H^3(\R))\) for some \(T>0\) which in what follows will denote the maximal time of existence.  
	
	We define the particle path
	\begin{equation}\label{eq:particle}
	\begin{aligned}
	&\frac{d }{d t}X(t,x)=u^2(t,X(t,x)),\\ 
	&X(0,x)=x.
	\end{aligned}
	\end{equation}
	Since \(u(x,t)\in C([0,T):H^3(\R))\), the ODE theory shows that \(X(\cdot;x)\) exists throughout the interval \(t\in[0,T)\) for all \(x\in\R\).
	We define
	\begin{equation}\label{0}
	\begin{aligned}
	&v_0(t,x)=u(t,X(t,x)), \\
	&v_1(t,x)=\partial_xu(t,X(t,x)),
	\end{aligned}
	\end{equation}
	and 
	\begin{equation}\label{1}
	\begin{aligned}
	m(t)=\inf_{x\in\R}v_1(t;x)=\inf_{x\in\R}\partial_xu(x,t)=:m(0)q^{-1}(t).
	\end{aligned}
	\end{equation}
	It is easy to see that
	\begin{align}
	&m(t)<0,\quad t\in [0,T),\label{2}\\
	&q(0)=1,\quad 
	q(t)>0,\quad t\in [0,T).\label{3}
	\end{align}
	It then follows from \eqref{eq:main} that 
	\begin{align}
	&\frac{d v_0}{d t}+ K_0(t,x)=0,\label{3.1}\\
	&\frac{d v_1}{d t}+2v_0v_1^2+K_1(t,x)=0,\label{3.2}
	\end{align}
	where
	\begin{align}
	&K_0(t,x)
	=\int_\R\frac{\mathrm{sgn}(y)}{|y|^{2+\alpha}}[u(t,X(t,x))-u(t,X(t,x)-y)]\, d y,\label{3.3}\\
	&K_1(t,x)
	=\int_\R \frac{\mathrm{sgn}(y)}{|y|^{2+\alpha}}[\partial_xu(t,X(t,x))-\partial_xu(t,X(t,x)-y)]\, d y \label{3.4}.
	\end{align}

	The main ingredient is to show
	\begin{equation}\label{4}
	\begin{aligned}
	|K_1(t,x)|<\delta^2 m^2(t),\quad \mathrm{for\ all}\ t\in[0,T)\ \mathrm{and} \ x\in\R.
	\end{aligned}
	\end{equation}
	In view of \eqref{c1}, one easily checks that \eqref{4} holds at \(t=0\). 
	We will prove \eqref{4} by contradiction. Suppose that \(|K_1(T_1,x_0)|=\delta^2 m^2(T_1)\) for some \(T_1\in(0,T)\) and some \(x_0\in\R\). 
	By continuity, without loss of generality, we may assume that
	\begin{equation}\label{5}
	\begin{aligned}
	|K_1(t,x)|\leq \delta^2 m^2(t),\quad \text{for\ all}\ t\in[0,T_1]\ \mathrm{and} \ x\in\R.
	\end{aligned}
	\end{equation}

	The following technical lemmas are a variant of those in \cite{NS,HT,SW2} which dealt with the quadratic nonlinearity. Differently, here we need to work near the shock to deal with the cubic nonlinearity.

	\subsubsection{Bounds on \(q(t)\)}
	
	Let \(t\in[0,T_1]\) and \(x\in [\bar{x}_1,\bar{x}_2]\).  We a priori assume 
	\begin{equation}\label{bound on v_0}
	\begin{aligned}
	A\leq v_0(t,x)\leq B,  
	\end{aligned}
	\end{equation}
	in which \(A\) and \(B\) are given in Theorem \ref{th:main} satisfying \eqref{c7}.

	We define 
	\begin{equation*}
	\begin{aligned}
	\Sigma_{\delta}(t)=\{x\in[\bar{x}_1,\bar{x}_2]:\ v_1(t,x)\leq (AB^{-1}-\delta)m(t)\},
	\end{aligned}
	\end{equation*}
	and 
	\begin{equation*}
	\begin{aligned}
	v_1(t,x)=:m(0)r^{-1}(t,x).
	\end{aligned}
	\end{equation*}

	\begin{lemma}\label{le:a1} We have \(\Sigma_{\delta}(t_2)\subset \Sigma_{\delta}(t_1) \) whenever \(0\leq t_1\leq t_2\leq T_1\).
		
	\end{lemma}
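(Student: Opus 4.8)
The plan is to encode membership in $\Sigma_\delta(t)$ through the sign of a single scalar and to show that this scalar can only change sign in one direction as $t$ grows. Set $\kappa:=AB^{-1}-\delta$, so that $0<\kappa<1$ for $\delta$ small by \eqref{c7}, and define $H(t,x):=v_1(t,x)-\kappa\,m(t)$. Since $m(t)<0$, one has $x\in\Sigma_\delta(t)$ exactly when $H(t,x)\le 0$ (and $x\in[\bar x_1,\bar x_2]$). Consequently, the inclusion $\Sigma_\delta(t_2)\subset\Sigma_\delta(t_1)$ for $t_1\le t_2$ is equivalent to the statement that, for each fixed label $x$, the time-set $\{t:\ H(t,x)\le 0\}$ is downward closed; by a standard first-crossing argument this follows once one shows $\frac{dH}{dt}(t,x)>0$ at every zero of $H(\cdot,x)$.

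To compute this derivative I would use the two evolution laws along the particle path. Equation \eqref{3.2} gives $\frac{dv_1}{dt}=-2v_0 v_1^2-K_1$. For $m$, note that $m(t)=\inf_x v_1(t,x)=\inf_y \partial_x u(t,y)$ because $X(t,\cdot)$ is a diffeomorphism; writing $x^\ast(t)$ for a label realizing this infimum (so $v_1(t,x^\ast)=m(t)$ and $\partial_x v_1(t,x^\ast)=0$), the envelope (Danskin) theorem yields, for a.e. $t$,
\[\frac{dm}{dt}=\frac{dv_1}{dt}\Big|_{x=x^\ast}=-2v_0(t,x^\ast)m^2(t)-K_1(t,x^\ast).\]

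The heart of the argument is the sign of $\frac{dH}{dt}=\frac{dv_1}{dt}-\kappa\frac{dm}{dt}$ at a zero of $H$, i.e. where $v_1=\kappa m$ and hence $v_1^2=\kappa^2 m^2$. Substituting the two laws gives
\[\frac{dH}{dt}=2\kappa\,m^2\big[v_0(t,x^\ast)-\kappa v_0(t,x)\big]+\big[\kappa K_1(t,x^\ast)-K_1(t,x)\big].\]
In the transport term I would invoke the a priori bounds \eqref{bound on v_0}: since $x\in\Sigma_\delta(t)\subset[\bar x_1,\bar x_2]$ one has $v_0(t,x)\le B$, and (see below) $v_0(t,x^\ast)\ge A$, so that $v_0(t,x^\ast)-\kappa v_0(t,x)\ge A-\kappa B=B\delta>0$, the positive gap being exactly the $-\delta$ built into $\kappa$. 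Thus the transport term is bounded below by $2\kappa B\delta\,m^2$. For the error term I would use the central smallness estimate \eqref{5}, $|K_1|\le \delta^2 m^2$, to bound $|\kappa K_1(t,x^\ast)-K_1(t,x)|\le(\kappa+1)\delta^2 m^2$. Combining,
\[\frac{dH}{dt}\ge m^2(t)\,\delta\big[2\kappa B-(\kappa+1)\delta\big]>0\]
for $\delta$ sufficiently small, since $\kappa$ is bounded away from $0$ by \eqref{c7}. This strict positivity at the zeros of $H$ closes the first-crossing argument and gives the claimed nesting.

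The step I expect to be most delicate is the justification that $v_0(t,x^\ast)\ge A$, i.e. that the minimizing label $x^\ast(t)$ stays inside $[\bar x_1,\bar x_2]$, where alone the amplitude bound \eqref{bound on v_0} is posited. At $t=0$ the most negative slope sits in $[\bar x_1,\bar x_2]$, so $x^\ast(0)\in\Sigma_\delta(0)\subset[\bar x_1,\bar x_2]$; the nesting being proved then keeps $x^\ast(t)\in\Sigma_\delta(t)\subset[\bar x_1,\bar x_2]$. To avoid circularity this must be run as a continuity/bootstrap argument on $T_1$ simultaneously with the lemma rather than invoked beforehand; a secondary, more technical point is making the envelope formula for $\frac{dm}{dt}$ rigorous (Lipschitz regularity of $m$ and possible non-uniqueness of $x^\ast$), which is standard but needs the almost-everywhere differentiability of $m$.
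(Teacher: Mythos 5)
Your proposal is correct in its core mechanism and it isolates exactly the same three ingredients the paper uses: the Riccati structure of \eqref{3.2}, the a priori bounds \eqref{bound on v_0}, the smallness \eqref{5} of $K_1$, and the strict gap $\delta$ built into $\kappa=AB^{-1}-\delta$ (your computation $A-\kappa B=\delta B$ is precisely the margin the paper exploits, in the guise of the constants $2B+\delta/2$ versus $2A-\delta/2$). But your packaging is genuinely different. You differentiate the scalar $H(t,x)=v_1(t,x)-\kappa m(t)$, which forces you to differentiate the infimum $m(t)$ via Danskin's envelope theorem, and then to handle the fact that $m$ is only Lipschitz (a.e.\ differentiable, or one-sided Danskin derivatives at the crossing time $\tau$). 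The paper avoids this machinery entirely: in its contradiction argument it fixes a label $x_2$ realizing the infimum at the \emph{left} endpoint $t_1$ only, integrates the two Riccati differential inequalities in closed form over a short interval (inequalities \eqref{a3}--\eqref{a4}), and then replaces any information about $\frac{dm}{dt}$ by the trivial one-sided bound $m(t_2)\leq v_1(t_2,x_2)$, which yields \eqref{a5}. What your route buys is a cleaner conceptual statement (strict inward pointing of the vector field on the boundary of the sublevel set $\{H\leq 0\}$, hence forward invariance of its complement); what the paper's route buys is that no regularity of $t\mapsto m(t)$ beyond continuity is ever needed, at the price of the uniform-continuity localization of $t_1,t_2$ and the slightly messier explicit Riccati solutions \eqref{a10}--\eqref{a11}.

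Two caveats, one of which you already identified. First, the envelope-theorem step is not merely cosmetic: the first-crossing argument needs a derivative bound \emph{at} the specific time $\tau$, not almost everywhere, so you must either use one-sided Danskin derivatives (which is fine here, since your positivity bound holds uniformly over all minimizers $x^\ast$) or extend the strict positivity to a neighborhood of the zero set by continuity and integrate; as written this is a genuine debt, though a payable one. Second, the requirement $v_0(t,x^\ast)\geq A$, i.e.\ that the global minimizer of $v_1(t,\cdot)$ lies in $[\bar{x}_1,\bar{x}_2]$, which you flag as the delicate point: note that the paper's own proof needs exactly the same fact (its $x_2$ must lie in $[\bar{x}_1,\bar{x}_2]$ for the bound $v_0(t,x_2)\geq A$ to follow from \eqref{bound on v_0}), and the paper leaves it implicit. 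So this is a shared assumption of both arguments, not a defect of yours relative to the paper's, and your suggestion to run it as a bootstrap simultaneous with the lemma is the honest way to close it.
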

	
	\begin{proof} 
		Suppose that there exists some \(x_1\in[\bar{x}_1,\bar{x}_2]\) such that
		\(x_1\notin\Sigma_{\delta}(t_1)\) but \(x_1\in\Sigma_{\delta}(t_2)\) for some \(0\leq t_1\leq t_2\leq T_1\), that is
		\begin{equation}\label{a1}
		\begin{aligned}
		&v_1(t_1,x_1)> (AB^{-1}-\delta)m(t_1),\\ 
		&v_1(t_2,x_1)\leq (AB^{-1}-\delta)m(t_2)<\frac{1}{2}AB^{-1}m(t_2).
		\end{aligned}
		\end{equation}
		Since \(v_1(\cdot,x_1)\) and \(m\) are uniformly continuous on \([0,T_1]\), due to the second inequality of \eqref{a1},	one can choose \(t_1\) and \(t_2\) sufficiently  close so that
		\begin{equation}\label{ad1}
		\begin{aligned}
		v_1(t,x_1)\leq \frac{1}{2}AB^{-1}m(t),\quad \text{for\ all}\ t\in [t_1,t_2].
		\end{aligned}
		\end{equation}
		Let 
		\begin{equation}\label{a2}
		\begin{aligned}
		v_1(t_1,x_2)=m(t_1)\big(\leq \frac{1}{2}AB^{-1}m(t_1)\big),
		\end{aligned}
		\end{equation}
		again one may choose \(t_2\) close to \(t_1\) so that
		\begin{equation}\label{ad2}
		\begin{aligned}
		v_1(t,x_2)\leq \frac{1}{2}AB^{-1}m(t),\quad \text{for\ all}\ t\in [t_1,t_2].
		\end{aligned}
		\end{equation}
		In the following we fix \(t_1\) and \(t_2\) such that all the inequalities \eqref{a1}, \eqref{ad1} and \eqref{ad2} hold true.

		According to \eqref{5} and \(A^2>8\delta B^2\) in \eqref{c7}, one has
		\begin{equation*}
		\begin{aligned}
		|K_1(t,x_j)|\leq \delta^2 m^2(t)
		\leq 4A^{-2}B^{2}\delta^2v_1^2(t,x_j)<\frac{\delta}{2} v_1^2(t,x_j),
		\end{aligned}
		\end{equation*}
		for all \(t\in[t_1,t_2],\ j=1,2\).
		This together with \eqref{3.2} yields
		\begin{equation*}
		\begin{aligned}
		\frac{d v_1}{d t}(t,x_1)&=-2v_0v_1^2(t,x_1)- K_1(t,x_1)\\
		&\geq -\big(2v_0(t,x_1)+\frac{\delta}{2}\big)v_1^2(t,x_1),\\
		&\geq -\big(2B+\frac{\delta}{2}\big)v_1^2(t,x_1),
		\end{aligned}
		\end{equation*}
		and
		\begin{equation*}
		\begin{aligned}
		\frac{d v_1}{d t}(t,x_2)&=-2v_0v_1^2(t,x_2)- K_1(t,x_2)\\
		&\leq -\big(2v_0(t,x_2)-\frac{\delta}{2}\big)v_1^2(t,x_2),\\
		&\leq -\big(2A-\frac{\delta}{2}\big)v_1^2(t,x_2),
		\end{aligned}
		\end{equation*}
		for \(t\in [t_1,t_2]\). Solving the resulting two inequalities above gives
		\begin{equation}\label{a3}
		\begin{aligned}
		v_1(t_2,x_1)\geq\frac{v_1(t_1,x_1)}{1+\big(2B+\frac{\delta}{2}\big)v_1(t_1,x_1)(t_2-t_1)},
		\end{aligned}
		\end{equation}
		and
		\begin{equation}\label{a4}
		\begin{aligned}
		v_1(t_2,x_2)\leq\frac{v_1(t_1,x_2)}{1+\big(2A-\frac{\delta}{2}\big)v_1(t_1,x_2)(t_2-t_1)}.
		\end{aligned}
		\end{equation}
		
		Applying \eqref{a2} to \eqref{a4}, one obtains
		\begin{equation}\label{a5}
		\begin{aligned}
		m(t_2)\leq\frac{m(t_1)}{1+\big(2A-\frac{\delta}{2}\big)m(t_1)(t_2-t_1)}.
		\end{aligned}
		\end{equation}
		In view of \eqref{a1} and \eqref{a3}, one estimates
		\begin{equation*}
		\begin{aligned}
		v_1(t_2,x_1)&>\frac{(AB^{-1}-\delta)m(t_1)}{1+\big(2B+\frac{\delta}{2}\big)(AB^{-1}-\delta)m(t_1)(t_2-t_1)}\\
		&>\frac{(AB^{-1}-\delta)m(t_1)}{1+\big(2A-\frac{\delta}{2}\big)m(t_1)(t_2-t_1)}\\
		&>(AB^{-1}-\delta)m(t_2),
		\end{aligned}
		\end{equation*}
		where we have used \eqref{a5} in the last inequality. We get a contradiction!	
		
	\end{proof}

	\begin{lemma}\label{le:a2} \(q(t)\) is decreasing and satisfies
		\begin{equation}\label{a6}
		\begin{aligned}
		0<q(t)\leq 1,\quad \text{for}\ t\in [0,T_1].
		\end{aligned}
		\end{equation}
		We also have the integral estimates 
		\begin{equation}\label{a7}
		\begin{aligned}
		\int_0^tq^{-s}(\tau)\, d \tau
		&\leq (1-s)^{-1}m^{-1}(0)(2A-\delta)^{-1}
		(AB^{-1}-\delta)^{-s}\\
		&\quad\times\big[q^{1-s}(t)-(AB^{-1}-\delta)^{s-1}\big],\quad \text{for}\ t\in [0,T_1].
		\end{aligned}
		\end{equation}	
		where \(s>0, s\neq 1\), and	
		\begin{equation}\label{a8}
		\begin{aligned}
		\int_0^tq^{-1}(\tau)\, d \tau
		&\leq m^{-1}(0)(2A-\delta)^{-1}(AB^{-1}-\delta)^{-1}\\
		&\quad\times\big[\log (AB^{-1}-\delta)+\log q(t)\big],\quad \text{for}\ t\in [0,T_1].
		\end{aligned}
		\end{equation}

	\end{lemma}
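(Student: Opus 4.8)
The plan is to turn the pointwise evolution \eqref{3.2} into a scalar Riccati-type inequality for the reciprocal of the extremal slope and then integrate. Throughout write \(\kappa=AB^{-1}-\delta\), which is positive by \eqref{c7} (since \(A^2>8\delta B^2\) forces \(AB^{-1}>\sqrt{8\delta}>\delta\)) and satisfies \(\kappa\le1\) because \(A<B\). Two facts drive everything. First, on \(\Sigma_\delta(t)\) one has \(v_1\le\kappa m\le\tfrac12 AB^{-1}m<0\), so by \eqref{5} and \(A^2>8\delta B^2\) the kernel obeys \(\abs{K_1}\le\delta^2m^2\le 4A^{-2}B^2\delta^2v_1^2<\tfrac{\delta}{2}v_1^2\). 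Second, by the a priori bound \eqref{bound on v_0} one has \(v_0\ge A\) on \([\bar x_1,\bar x_2]\supset\Sigma_\delta(t)\). Feeding these into \eqref{3.2} gives, at any point of \(\Sigma_\delta\), the differential inequality \(\frac{d}{dt}v_1\le-(2A-\tfrac{\delta}{2})v_1^2\le-(2A-\delta)v_1^2\).

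\emph{Monotonicity of \(q\).} Evaluating this at a label realizing the infimum \(m(t)\) (which lies in \([\bar x_1,\bar x_2]\), hence in \(\Sigma_\delta(t)\) since \(m\le\kappa m\)) and using the standard Dini-derivative inequality for an infimum, I pass to reciprocals to get \(\frac{d}{dt}(1/m)\ge 2A-\delta>0\). As \(1/m\) is continuous and negative this yields \(1/m(t)-1/m(0)\ge(2A-\delta)t\), so \(1/m\) increases, \(m\) decreases, and hence \(q=m(0)/m\) decreases; with \(q(0)=1\) and \(q>0\) from \eqref{3} this is exactly \(0<q(t)\le1\).

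\emph{Integral estimates.} Fix \(t\in(0,T_1]\) and let \(x_*\) realize \(m(t)=v_1(t,x_*)\). Since \(x_*\in\Sigma_\delta(t)\), the nesting Lemma \ref{le:a1} gives \(x_*\in\Sigma_\delta(\tau)\) for every \(\tau\in[0,t]\). Writing \(r(\tau)=r(\tau,x_*)=m(0)/v_1(\tau,x_*)\), the inequality above holds along this single label for all \(\tau\in[0,t]\) and, via \(v_1=m(0)r^{-1}\), becomes \(\frac{dr}{d\tau}\le(2A-\delta)m(0)=:c<0\). For \(s\neq1\) I would combine the exact identity \(r^{-s}\frac{dr}{d\tau}=\frac{1}{1-s}\frac{d}{d\tau}r^{1-s}\) with \(\frac{dr}{d\tau}\le c<0\) to obtain \(r^{-s}\le\frac{1}{c(1-s)}\frac{d}{d\tau}r^{1-s}\) and integrate over \([0,t]\). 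Membership \(x_*\in\Sigma_\delta(\tau)\) also gives the two-sided comparison \(\kappa\,r(\tau)\le q(\tau)\le r(\tau)\), whose left half yields \(q^{-s}\le\kappa^{-s}r^{-s}\) and hence \(\int_0^t q^{-s}\,d\tau\le\kappa^{-s}\int_0^t r^{-s}\,d\tau\). Finally I insert the endpoint data \(r(t)=q(t)\) and \(r(0)\le\kappa^{-1}\) (from \(x_*\in\Sigma_\delta(0)\) and \(q(0)=1\)); a short sign check—treating \(s<1\) and \(s>1\) separately, since \(m(0)<0\) flips the sign of the coefficient—shows that replacing \(r^{1-s}(0)\) by \(\kappa^{s-1}\) only enlarges the right-hand side, producing \eqref{a7}. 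The borderline case \(s=1\) is identical with \(\log r\) replacing \(r^{1-s}/(1-s)\) and yields \eqref{a8}.

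\emph{Main obstacle.} The delicate point is rigor at the infimum: neither \(m(t)\) nor the minimizing location is a priori differentiable, so the Riccati step and the selection of the path \(x_*\) must be phrased through Dini derivatives (or through a near-minimizer plus the short-time continuity argument already used in the proof of Lemma \ref{le:a1}), and one must know the infimum is genuinely attained inside \([\bar x_1,\bar x_2]\) so that \(v_0\ge A\) applies. Here Lemma \ref{le:a1} is exactly the tool that makes this tractable: it lets me freeze one particle label \(x_*\) and certify it has remained in \(\Sigma_\delta\) for all earlier times, which simultaneously upgrades the pointwise bound to an estimate for \(r(\tau)\) on all of \([0,t]\) and supplies the comparison \(\kappa r\le q\le r\) that inserts the factors \((AB^{-1}-\delta)\). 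The remaining nuisance is pure sign bookkeeping around \(s=1\), which I would dispatch by splitting into \(s<1\) and \(s>1\) at the very last step.
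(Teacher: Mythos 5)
Your proposal is correct and takes essentially the same route as the paper's proof: both freeze a particle label that Lemma \ref{le:a1} certifies to remain in \(\Sigma_{\delta}\) on \([0,t]\), derive the bound \(\tfrac{d}{d\tau}r(\tau,x)\le (2A-\delta)m(0)<0\) for the Riccati variable \(r\), combine it with the two-sided comparison \((AB^{-1}-\delta)\,r\le q\le r\) (the paper's \eqref{a12}), and then integrate \(r^{-s}\,\tfrac{dr}{d\tau}\) with exactly the endpoint substitutions and \(s\lessgtr 1\) sign checks you describe. The only cosmetic differences are that the paper fixes an arbitrary \(x\in\Sigma_{\delta}(T_1)\) and reads the derivative bound off the explicit solution formula \eqref{a10} of \eqref{3.2}, rather than choosing your time-\(t\) infimum-realizing label and arguing by differential inequality (which is also what forces the Dini-derivative care you flag).
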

	
	\begin{proof} 
		Let \(x\in\Sigma_{\delta}(T_1)\), it first follows from Lemma \ref{le:a1} that
		\begin{equation}\label{a9}
		\begin{aligned}
		m(t)\leq v_1(t,x)\leq (AB^{-1}-\delta)m(t), \quad \text{for}\ t\in [0,T_1].
		\end{aligned}
		\end{equation}	
		The solution of \eqref{3.2} can be expressed 	
		\begin{equation}\label{a10}
		\begin{aligned}
		&m(0)r^{-1}(t,x)=v_1(t,x)\\
		&=\frac{v_1(0,x)}{1+ v_1(0,x)\int_0^t\big[2v_0(\tau,x)+v_1^{-2}K_1(\tau,x)\big]\, d \tau}.
		\end{aligned}
		\end{equation}	
		It follows from \eqref{5} and \eqref{a9} that 
		\begin{equation*}
		\begin{aligned}
		|v_1^{-2}K_1(\tau,x)|\leq (AB^{-1}-\delta)^{-2}\delta^2
		<\delta,\quad \text{for}\ t\in [0,T_1]\ \text{and}\ x\in\Sigma_{\delta}(T_1).
		\end{aligned}
		\end{equation*}	
		This together with \eqref{a10} implies 
		\begin{align}\label{a11}
		(2B+\delta)m(0)\leq \frac{d }{d t}r(t,x)\leq (2A-\delta)m(0),\quad \text{for}\ t\in [0,T_1]\ \text{and}\ x\in\Sigma_{\delta}(T_1).
		\end{align}	
		It is easy to see that \(r(t,x)\) is decreasing for all 
		\(t\in [0,T_1]\) from \eqref{a11}, and hence \(v_1(t,x)\), too. 
		Furthermore, by \eqref{1}, \(q(t)\) is also decreasing for all 
		\(t\in [0,T_1]\), which implies \eqref{a6} by \eqref{3}.
		
		On the other hand, we conclude from \eqref{1}, \eqref{a9} and \eqref{a10} that
		\begin{equation}\label{a12}
		\begin{aligned}
		q(t)\leq r(t,x)\leq (AB^{-1}-\delta)^{-1}q(t),\quad \text{for}\ t\in [0,T_1]\ \text{and}\ x\in\Sigma_{\delta}(T_1).
		\end{aligned}
		\end{equation}
		Let \(s>0, s\neq 1\), we use \eqref{a11} and \eqref{a12} to deduce that
		\begin{equation*}
		\begin{aligned}
		&\int_0^tq^{-s}(\tau)\, d \tau
		\leq (AB^{-1}-\delta)^{-s}\int_0^tr^{-s}(\tau,x)\, d \tau\\
		&\leq m^{-1}(0)(2A-\delta)^{-1}(AB^{-1}-\delta)^{-s}
		\int_0^tr^{-s}(\tau,x)\frac{d }{d \tau}r(\tau,x)\, d \tau\\
		&=(1-s)^{-1}m^{-1}(0)(2A-\delta)^{-1}(AB^{-1}-\delta)^{-s}
		[r^{1-s}(t,x)-r^{1-s}(0,x)],
		\end{aligned}
		\end{equation*}	
		which combines \eqref{a12} again implies \eqref{a7}. One can verify \eqref{a8} similarly.
	\end{proof}

	\bigskip
	To show \eqref{5}, we also use a contradiction argument. 
	We claim that
	\begin{align}\label{6}
	\|v_0(t)\|_{L^\infty}=\|u(t)\|_{L^\infty}<C_0,\quad   \text{for\ all}\  t\in [0,T_1], 
	\end{align}
	and
	\begin{align}\label{7}
	\|v_1(t)\|_{L^\infty}=\|\partial_xu(t)\|_{L^\infty}<C_1q^{-1}(t),\quad   \text{for\ all}\  t\in [0,T_1], 
	\end{align}
	where \(C_0,C_1\) satisfy \eqref{c6}. First observe that 
	\begin{equation*}
	\begin{aligned}
	\|v_0(0)\|_{L^\infty}=\|\phi\|_{L^\infty}<C_0,
	\end{aligned}
	\end{equation*}
	and
	\begin{equation*}
	\begin{aligned}
	\|v_1(0)\|_{L^\infty}=\|\phi^\prime\|_{L^\infty}<C_1q^{-1}(0).
	\end{aligned}
	\end{equation*}
	We then proceed by  contradiction in order  to show \eqref{6} and \eqref{7}. 
	Suppose that \eqref{6} and \eqref{7} hold for all  \(t\in [0, T_2)\), but fails for either \eqref{6} or \eqref{7} at \(t = T_2\) for some \(T_2\in (0, T_1]\). Hence, by continuity, it holds
	\begin{align}\label{8}
	\|v_0(t)\|_{L^\infty}=\|u(t)\|_{L^\infty}\leq C_0,\quad   \text{for\ all}\  t\in [0,T_2], 
	\end{align}
	and
	\begin{align}\label{9}
	\|v_1(t)\|_{L^\infty}=\|\partial_xu(t)\|_{L^\infty}\leq C_1q^{-1}(t),\quad   \text{for\ all}\  t\in [0,T_2].
	\end{align}
	
	\bigskip
	
	\subsubsection{Estimates on Nonlocal Terms}	
	\begin{lemma}\label{le:fKdV-nonlocal} For all \(t\in[0,T_2]\) and \(x\in\R\),  we have 
		\begin{equation}\label{10}
		\begin{aligned}
		|K_0(t,x)|\leq \big[4C_0(1+\alpha)^{-1}+2C_1(-\alpha)^{-1}\big]q(t)^{-(1+\alpha)},
		\end{aligned}
		\end{equation}
		and
		\begin{equation}\label{12}
		\begin{aligned}
		|K_1(t,x)|
		\leq \big[4C_1(1+\alpha)^{-1}+2C_s(-\alpha)^{-1}(2^{-1}AB^{-1})^{-7BA^{-2}/4}\|\phi^{\prime\prime\prime}\|_{L^2}\big]q^{-2}(t),
		\end{aligned}
		\end{equation}

	\end{lemma}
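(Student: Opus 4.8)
The plan is to estimate \(K_0\) and \(K_1\) by cutting the singular integrals \eqref{3.3}--\eqref{3.4} at the scale \(\ell=q(t)\); this choice is what makes the two resulting pieces of \eqref{10} balance, and near the origin the integrals are absolutely convergent because \(2+\alpha<2\). For \(K_0\), on \(|y|>q(t)\) I would use only \(|u(t,X)-u(t,X-y)|\le 2\|u(t)\|_{L^\infty}\le 2C_0\) together with \(\int_{|y|>q}|y|^{-2-\alpha}\,dy=\frac{2}{1+\alpha}q^{-(1+\alpha)}\), while on \(|y|\le q(t)\) the mean value bound \(|u(t,X)-u(t,X-y)|\le\|u_x(t)\|_{L^\infty}|y|\le C_1 q^{-1}(t)|y|\) from \eqref{9} combined with \(\int_{|y|\le q}|y|^{-1-\alpha}\,dy=\frac{2}{-\alpha}q^{-\alpha}\) is used; adding them gives \eqref{10}, the powers matching since \(q^{-1}q^{-\alpha}=q^{-(1+\alpha)}\).

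For \(K_1\) the far region is identical with \(\partial_x u\) in place of \(u\), that is, \(|\partial_x u(t,X)-\partial_x u(t,X-y)|\le 2C_1 q^{-1}(t)\), producing \(\frac{4C_1}{1+\alpha}q^{-(2+\alpha)}(t)\); since \(\alpha<0\) and \(0<q\le1\) one has \(q^{-(2+\alpha)}\le q^{-2}\), which is the first term of \eqref{12}. The essential difficulty is the near region: here the first difference of \(\partial_x u\) forces a second derivative, \(|\partial_x u(t,X)-\partial_x u(t,X-y)|\le\|\partial_x^2 u(t)\|_{L^\infty}|y|\), so this piece is \(\le\frac{2}{-\alpha}\|\partial_x^2 u(t)\|_{L^\infty}q^{-\alpha}(t)\). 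Everything thus reduces to bounding \(\|\partial_x^2 u(t)\|_{L^\infty}\); by the stated Sobolev inequality \(\|\partial_x^2 u\|_{L^\infty}\le C_s\|\partial_x^2 u\|_{H^1}\), whose leading contribution is \(\|\partial_x^3 u\|_{L^2}\), this becomes an energy estimate at the \(H^3\) level.

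To obtain that estimate I would differentiate \eqref{eq:main} three times and pair with \(\partial_x^3 u\) in \(L^2\). The operator \(|D|^\alpha\partial_x\) is skew-adjoint, so it contributes nothing; the surviving top-order part of the nonlinearity, after one integration by parts, is \(-7\int_\R uu_x(\partial_x^3 u)^2\,dx\), which is the source of the constant \(7\) in \eqref{12}. Estimating it by \(7\|u\|_{L^\infty}\|u_x\|_{L^\infty}\|\partial_x^3 u\|_{L^2}^2\) and applying Gronwall yields
\[
\|\partial_x^3 u(t)\|_{L^2}\le\|\phi^{\prime\prime\prime}\|_{L^2}\exp\Big(7\int_0^t\|u(\tau)\|_{L^\infty}\|u_x(\tau)\|_{L^\infty}\,d\tau\Big).
\]
Into the exponent I would feed the a priori bound \(v_0\le B\) from \eqref{bound on v_0} for \(\|u\|_{L^\infty}\) and the integral estimate \eqref{a8} of Lemma \ref{le:a2} for the time integral of \(\|u_x\|_{L^\infty}\); the latter is linear in \(\log(2B/A)\) and \(\log q^{-1}(t)\). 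Exponentiating therefore produces, with one common exponent \(\frac{7B}{4A^2}\), both the multiplicative constant \((2^{-1}AB^{-1})^{-7B/(4A^2)}\) and the growth factor \(q^{-7B/(4A^2)}(t)\).

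Combining the two parts, the near contribution to \(K_1\) is at most \(\frac{2C_s}{-\alpha}(2^{-1}AB^{-1})^{-7B/(4A^2)}\|\phi^{\prime\prime\prime}\|_{L^2}\,q^{-(\alpha+7B/(4A^2))}(t)\). The third relation in \eqref{c7}, \(8A^2>7B\), is tailored precisely so that \(\frac{7B}{4A^2}<2\); then \(\alpha+\frac{7B}{4A^2}<2\) and the factor \(q\le1\) upgrades the exponent to \(q^{-2}(t)\), giving the second term of \eqref{12}. I expect the main obstacle to be the sharp bookkeeping in the energy estimate — extracting the exact top-order coefficient \(7\), and combining \(v_0\le B\) with \eqref{a8} so that the growth factor carries exactly the exponent \(\frac{7B}{4A^2}\) that \eqref{c7} keeps below the threshold \(2\).
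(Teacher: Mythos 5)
Your handling of $K_0$, and of the far region of $K_1$, is correct and is essentially the paper's argument (the paper optimizes a second cutoff $\eta_2=q(t)^{-1+7BA^{-2}/4}$ for $K_1$, but your single cutoff at $q(t)$ works equally well since $q^{-(2+\alpha)}\le q^{-2}$). The genuine gap is in the Gronwall step that is supposed to reproduce the bound \eqref{18} on $\|\partial_x^3u\|_{L^2}$, and it is fatal to the constants in \eqref{12}. In the paper, the inequality \eqref{the third derivative} has integrand $\partial_xu(\partial_x^3u)^2$ with \emph{no factor of $u$}; the one-sided pointwise bound $-\partial_xu\le -m(t)=-m(0)q^{-1}(t)$ then gives a Gronwall rate $\tfrac72|m(0)|q^{-1}(t)$, and the factor $|m(0)|$ cancels exactly against the factor $m^{-1}(0)$ appearing in \eqref{a8}. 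That cancellation is the only reason the exponent of $q^{-1}$ in \eqref{18} is the pure number $\tfrac{7}{2(2A-\delta)(AB^{-1}-\delta)}\approx 7BA^{-2}/4$, independent of $\phi$. You instead bound the top-order term by $7\|u\|_{L^\infty}\|u_x\|_{L^\infty}\|\partial_x^3u\|_{L^2}^2\le 7BC_1q^{-1}(t)\|\partial_x^3u\|_{L^2}^2$, a rate proportional to $BC_1$ rather than to $|m(0)|$. Feeding this into \eqref{a8}, the $m^{-1}(0)$ no longer cancels, and exponentiating gives the exponent
\begin{equation*}
\frac{7BC_1}{|m(0)|\,(2A-\delta)(AB^{-1}-\delta)}\ \approx\ \frac{7B^2C_1}{2A^2\,|m(0)|},
\end{equation*}
not $7B/(4A^2)$ as you assert. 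Since \eqref{c6} forces $C_1\ge 2\|\phi^\prime\|_{L^\infty}\ge 2|m(0)|$, this exponent is at least $7B^2/A^2$, which is $>7$ because \eqref{c7} requires $B>A$. The near-region contribution to $K_1$ is then only bounded by $q^{-(\alpha+\gamma)}(t)$ with $\gamma>7$, hence $\alpha+\gamma>6$: neither the power $q^{-2}$ nor the stated constant follows, and the hypothesis $8A^2>7B$ gives no control whatsoever over your exponent.

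There are two further problems in the same step. First, the bound $\|u\|_{L^\infty}\le B$ is not available: \eqref{bound on v_0} is an a priori bound on $v_0(t,x)=u(t,X(t,x))$ only for $x\in[\bar x_1,\bar x_2]$, i.e.\ along particle paths issued from that interval; the only global bound is $\|u(t)\|_{L^\infty}\le C_0$ from \eqref{8}, and replacing $B$ by $C_0$ does not repair the exponent, by the same computation. Second, your top-order term $-7\int_\R uu_x(\partial_x^3u)^2\,dx$ is not what the paper's proof uses: \eqref{the third derivative} reads $-\tfrac72\int_\R\partial_xu(\partial_x^3u)^2\,dx$, and the lemma's constants are tailored to exactly that form, because the absence of the factor $u$ is what permits a rate proportional to $|m(0)|q^{-1}(t)$. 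Your Leibniz expansion of $\partial_x^3(u^2u_x)$ also produces additional terms, such as a multiple of $\int_\R u_xu_{xx}^3\,dx$, which you discard without any estimate. In short, to obtain \eqref{12} with the stated constants one must run Gronwall with a rate proportional to $|m(0)|q^{-1}(t)$ — exploiting the one-sided bound $\partial_xu\ge m(t)$ as the paper does — whereas the product bound $\|u\|_{L^\infty}\|u_x\|_{L^\infty}$ irretrievably loses the $m(0)$-cancellation, so your argument cannot close.
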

	
	\begin{proof} We follow the idea of \cite{SW2}, however the proof here is simpler when  dealing with \(\|\partial_x^2u\|_{L^\infty}\) in order to estimate \(K_1(t,x)\) than that of the fKdV equation  due to the stronger nonlinear effect present in  the equation \eqref{eq:main}. The proof on \(K_0(t,x)\) is the same as that of \cite{SW2}, we include it here for sake of completeness. 
		To estimate the terms \(K_0(t,x)\) and \(K_1(t,x)\), we perform  the following decompositions  
		\begin{equation*}
		\begin{aligned}
		|K_0(t,x)|\leq A_1+A_2,\\
		|K_1(t,x)|\leq B_1+B_2,
		\end{aligned}
		\end{equation*}
		where
		\begin{equation*}
		\begin{aligned}
		&A_1=\int_{|y|<\eta_1}\frac{1}{|y|^{2+\alpha}}\big|u(t,X(t,x))-u(t,X(t,x)-y)\big|\, d y,\\
		&A_2=\int_{|y|\geq\eta_1}\frac{1}{|y|^{2+\alpha}}\big|u(t,X(t,x))-u(t,X(t,x)-y)\big|\, d y,\\
		&B_1=\int_{|y|<\eta_2}\frac{1}{|y|^{2+\alpha}}\big|\partial_xu(t,X(t,x))-\partial_xu(t,X(t,x)-y)\big|\, d y,\\
		&B_2=\int_{|y|\geq\eta_2}\frac{1}{|y|^{2+\alpha}}\big|\partial_xu(t,X(t,x))-\partial_xu(t,X(t,x)-y)\big|\, d y,
		\end{aligned}
		\end{equation*}
		for some \(\eta_1>0\) and \(\eta_2>0\) to be specified later. 
		In view of \eqref{8}-\eqref{9},	one may estimate
		\begin{equation}\label{14}
		\begin{aligned}
		|A_1|
		&\leq |u|_{C^{0,1}} \int_{|y|<\eta_1}\frac{1}{|y|^{2+\alpha}}|y|\, d y
		\leq 2(-\alpha)^{-1}\eta_1^{-\alpha}\|\partial_xu\|_{L^\infty}\\
		&\leq 2 C_1(-\alpha)^{-1}\eta_1^{-\alpha}q(t)^{-1},
		\end{aligned}
		\end{equation}
		and   	
		\begin{equation}\label{15}
		\begin{aligned}
		|A_2|&\leq 2\|v_0\|_{L^\infty}\int_{|y|<\eta_1}\frac{1}{|y|^{2+\alpha}}\, d y
		\leq 4(1+\alpha)^{-1}\eta_1^{-(1+\alpha)}\|v_0\|_{L^\infty}\\
		&\leq 4 C_0(1+\alpha)^{-1}\eta_1^{-(1+\alpha)}.
		\end{aligned}
		\end{equation}	
		Choosing \(\eta_1=q(t)\), one may minimize \eqref{14} and \eqref{15} to obtain the estimate \eqref{10}.
		
		Similarly, one has
		\begin{equation}\label{16}
		\begin{aligned}
		|B_1|
		\leq 2(-\alpha)^{-1}\eta_2^{-\alpha}\|\partial_x^2u\|_{L^\infty},
		\end{aligned}
		\end{equation}
		and
		\begin{equation}\label{17}
		\begin{aligned}
		|B_2|
		\leq 4 C_1(1+\alpha)^{-1}\eta_2^{-(1+\alpha)}q^{-1}(t).
		\end{aligned}
		\end{equation}
		To control \(\|\partial_x^2u\|_{L^\infty}\), we shall estimate \(\|\partial_x^3u\|_{L^2}\).
		Using integration by parts, a straightforward calculation gives
		\begin{equation}\label{the third derivative}
		\begin{aligned}
		\frac{1}{2}\frac{d}{d t}\int_\R(\partial_x^3u)^2\, d x
		=-\frac{7}{2}\int_\R\partial_xu(\partial_x^3u)^2\, d x
		\leq -\frac{7}{2} m(0)q^{-1}(t)\|\partial_x^3u\|_{L^2}^2,
		\end{aligned}
		\end{equation}
		which together with \eqref{a8} yields
		\begin{equation}\label{18}
		\begin{aligned}
		\|\partial_x^3u\|_{L^2}
		&\leq \|\phi^{\prime\prime\prime}\|_{L^2}(AB^{-1}-\delta)^{-\frac{7}{2(2A-\delta)(AB^{-1}-\delta)}}
		q(t)^{-\frac{7}{2(2A-\delta)(AB^{-1}-\delta)}}\\
		&\leq (2^{-1}AB^{-1})^{-7BA^{-2}/4}\|\phi^{\prime\prime\prime}\|_{L^2}
		q(t)^{-7BA^{-2}/4},
		\end{aligned}
		\end{equation}
		where we have used the assumption  that \(\delta\) is sufficiently small. Inserting \eqref{18} into \eqref{16} gives
		\begin{equation}\label{19}
		\begin{aligned}
		|B_1|
		\leq 2C_s(-\alpha)^{-1}(2^{-1}AB^{-1})^{-7BA^{-2}/4}\|\phi^{\prime\prime\prime}\|_{L^2}\eta_2^{-\alpha}q(t)^{-7BA^{-2}/4}.
		\end{aligned}
		\end{equation}
		Minimizing 	\eqref{17} and \eqref{19} by taking \(\eta_2=q(t)^{-1+7BA^{-2}/4}\), one obtains
		\begin{equation*}
		\begin{aligned}
		|K_1(t,x)|
		&\leq \big[4C_1(1+\alpha)^{-1}+2C_s(-\alpha)^{-1}(2^{-1}AB^{-1})^{-7BA^{-2}/4}\|\phi^{\prime\prime\prime}\|_{L^2}\big]\\
		&\quad\times q(t)^{(1-7BA^{-2}/4)(1+\alpha)-1}\\
		&\leq \big[4C_1(1+\alpha)^{-1}+2C_s(-\alpha)^{-1}(2^{-1}AB^{-1})^{-7BA^{-2}/4}\|\phi^{\prime\prime\prime}\|_{L^2}\big]
		q(t)^{-2},
		\end{aligned}
		\end{equation*}
		in which we have used the fact
		\begin{equation*}
		\begin{aligned}
		(1-7BA^{-2}/4)(1+\alpha)\geq -1,
		\end{aligned}
		\end{equation*}
		due to \(8A^{2}>7B\) in \eqref{c7} and \(\alpha\in(-1,0)\).
		This finishes the proof of \eqref{12}.

	\end{proof}

	\subsubsection{Proof of \eqref{4}} Let \(t\in[0,T_2]\) and  \(x\in\R\).		 
	In light of \eqref{10} and applying \eqref{a7}, it follows from \eqref{3.1} that
	\begin{equation}\label{21}
	\begin{aligned}
	|v_0(t,x)|&\leq \|\phi\|_{L^\infty}+\int_0^{t}|K_0(\tau,x)|\, d \tau\\
	&\leq \frac{C_0}{2}+\big[4C_0(1+\alpha)^{-1}+2C_1(-\alpha)^{-1}\big]\int_0^tq(\tau)^{-(1+\alpha)}\, d \tau\\
	&\leq \frac{C_0}{2}-m^{-1}(0)(1-\delta)^{-2}[4C_0(-\alpha(1+\alpha))^{-1}+2C_1\alpha^{-2}]\\
	&< C_0,
	\end{aligned}
	\end{equation}
	where we have used \eqref{c2} in the last inequality.
	By \eqref{12} and applying \eqref{a7}, one uses \eqref{3.2} to deduce that
	\begin{equation}\label{22}
	\begin{aligned}
	v_1(t,x)&\leq \|\phi^\prime\|_{L^\infty}+\int_0^{t}|K_1(\tau,x)|\, d \tau\\
	&\leq\|\phi^\prime\|_{L^\infty}+\big[4C_1(1+\alpha)^{-1}\\
	&\quad+2C_s(-\alpha)^{-1}(2^{-1}AB^{-1})^{-7BA^{-2}/4}\|\phi^{\prime\prime\prime}\|_{L^2}\big]\int_0^tq^{-2}(\tau)\, d \tau\\
	&\leq \frac{C_1}{2}q^{-1}(t)- m^{-1}(0)(1-\delta)^{-3}q^{-1}(t)\\
	&\quad\times\big[4C_1(1+\alpha)^{-1}+2C_s(-\alpha)^{-1}(2^{-1}AB^{-1})^{-7BA^{-2}/4}\|\phi^{\prime\prime\prime}\|_{L^2}\big]\\
	&<C_1q^{-1}(t),
	\end{aligned}
	\end{equation}
	in which we have used \eqref{c3} in the last inequality.
	On the other hand, one has
	\begin{equation}\label{23}
	\begin{aligned}
	v_1(t,x)\geq m(t)=m(0)q^{-1}(t)
	\geq-\frac{C_1}{2}q^{-1}(t).
	\end{aligned}
	\end{equation}

	A contradiction with \eqref{8}-\eqref{9} occurs following from \eqref{21}-\eqref{23}. 
	Now we go back to \eqref{12} and use \eqref{c1} to find that
	\begin{equation*}
	\begin{aligned}
	|K_1(t,x)|
	&\leq m^{-2}(0)m^2(t)\big[4C_1(1+\alpha)^{-1}\\
	&\quad+2C_s(-\alpha)^{-1}(2^{-1}AB^{-1})^{-7BA^{-2}/4}\|\phi^{\prime\prime\prime}\|_{L^2}\big]\\
	&<\delta^2m^2(t),
	\end{aligned}
	\end{equation*}
	for all \(t\in [0, T_1 ]\) and all \(x\in\R\). We get a contradiction to \eqref{5}! This means we have shown \eqref{4} for all \(t\in [0, T)\) and all \(x\in\R\). 	
	\qed

	\subsubsection{Proof of \eqref{bound on v_0}} Let \(t\in[0,T_1]\) and  \(x\in[\bar{x}_1,\bar{x}_2]\).
	In a similar manner to \eqref{21},  one may estimate 
	\begin{equation}\label{24}
	\begin{aligned}
	v_0(t,x)
	\leq \phi(x)-m^{-1}(0)(1-\delta)^{-2}\big[4C_0\big(-\alpha(1+\alpha)\big)^{-1}+2C_1\alpha^{-2}\big]
	< B,
	\end{aligned}
	\end{equation}
	and
	\begin{equation}\label{25}
	\begin{aligned}
	v_0(t,x)
	\geq \phi(x)+m^{-1}(0)(1-\delta)^{-2}\big[4C_0\big(-\alpha(1+\alpha)\big)^{-1}+2C_1\alpha^{-2}\big]
	> A,
	\end{aligned}
	\end{equation}
	where we have used \eqref{c4} in \eqref{24} and \eqref{c5} in \eqref{25} respectively. The estimates \eqref{24}-\eqref{25} entail \eqref{bound on v_0}.
	\qed

	\bigskip
	We are now in a position to finish the Proof of Theorem \ref{th:main}.
	\begin{proof}[Proof of Theorem \ref{th:main}]
		We first mention that since we have already shown \eqref{4}, 
		the results in Lemma \ref{le:a1} and Lemma \ref{le:a2} hold true for all closed intervals \(t \in [0, T^\prime]\) with any \(T^\prime<T\) of \([0, T)\).
		Let \(t \in [0, T)\), for any  
		\begin{equation*}
		\begin{aligned}
		x\in \Sigma_{\delta}(t)=\{x\in[\bar{x}_1,\bar{x}_2]:v_1(t,x)\leq (AB^{-1}-\delta)m(t)\},
		\end{aligned}
		\end{equation*} 
		we deduce by applying  Lemma \ref{le:a1} with $t_1=0, t_2=t$ that
		\begin{equation*}
		\begin{aligned}
		m(0)\leq v_1(0,x)\leq (AB^{-1}-\delta)m(0).
		\end{aligned}
		\end{equation*}
		Combining this with \eqref{a10} and \eqref{a11} one sees that
		\begin{equation*}
		\begin{aligned}
		r(t,x)&\leq m(0)\big[v_1^{-1}(0,x)+(2A-\delta)t\big]\\
		&\leq (AB^{-1}-\delta)^{-1}+m(0)(2A-\delta)t,
		\end{aligned}
		\end{equation*}
		and
		\begin{equation*}
		\begin{aligned}
		r(t,x)&\geq m(0)\big[v_1^{-1}(0,x)+(2B+\delta)t\big]\\
		&\geq 1+m(0)(2B+\delta)t.
		\end{aligned}
		\end{equation*}
		These two inequalities together with  \eqref{a12} give
		\begin{equation*}
		\begin{aligned}
		&(AB^{-1}-\delta)+m(0)(AB^{-1}-\delta)(2B+\delta)t\leq q(t)\\
		&\leq (AB^{-1}-\delta)^{-1}+m(0)(2A-\delta)t.
		\end{aligned}
		\end{equation*}
		It is easy to see that \(q(t)\) goes to zero (which means \(m(t)\) goes to \(-\infty\)) by letting
		\(t\) go to \((2B+\delta)^{-1}\big(-\inf_{x\in\R}\phi^\prime(x)\big)^{-1}\) on the left hand side and \((AB^{-1}-\delta)^{-1}(2A-\delta)^{-1}\big(-\inf_{x\in\R}\phi^\prime(x)\big)^{-1}\) on the right hand side respectively. 
		On the other hand, since we have already shown 
		\eqref{4}, we deduce that \(v_0(t,x)\) is bounded for all \(t 
		\in [0, T^\prime]\) with any \(T^\prime<T\) in view of \eqref{6}.  This means that a shock of \eqref{eq:main} with the initial data \(u(0,x)=\phi(x)\) occurs at time \(T\) obeying \eqref{shock time}. The blow-up rate \eqref{shock blow-up rate} follows from \eqref{a10} and \eqref{shock time} immediately.   
		It remains to estimate the location of this shock. First from \eqref{a10} and \eqref{7} we see that \(v_1(t,x)\) blows up at some location  \(\bar{x}\in[\bar{x}_1,\bar{x}_2]\). Then we go back to \eqref{eq:particle} to find that 
		\begin{equation*}
		\begin{aligned}
		x_*=:X(T,\bar{x})=\bar{x}+\int_0^Tu^2(t,X(t,\bar{x}))\, d t,
		\end{aligned}
		\end{equation*}
		which immediately yields the estimate \eqref{shock location} via the bound \eqref{6}.
		This completes the proof.

	\end{proof}

We conclude this section by various numerical simulations 
illustrating the shock formation. The numerical approach is identical 
to the one outlined in \cite{KS} to which the reader is referred to for 
details. 

As an example we consider Gaussian initial data $u(x,0)=\exp(-x^{2})$ 
for $\alpha=-0.5$. The solution steepens as would be the case for the 
solution to the Burgers' equation for the same initial data. However, 
there is a considerable difference as can be seen in 
Fig.~\ref{mfKdVm05gausst} on the left. The solution develops a single 
oscillation which forms eventually a cusp as is clearly visible in 
the close-up of the solution at the final time $t=1.211$ on the right 
of the same figure. 
\begin{figure}[htb!]
  \includegraphics[width=0.49\textwidth]{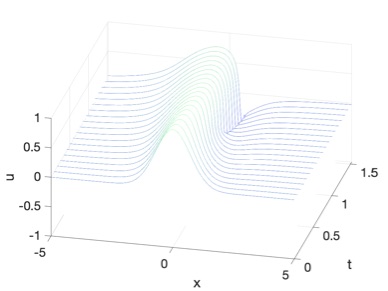}
  \includegraphics[width=0.49\textwidth]{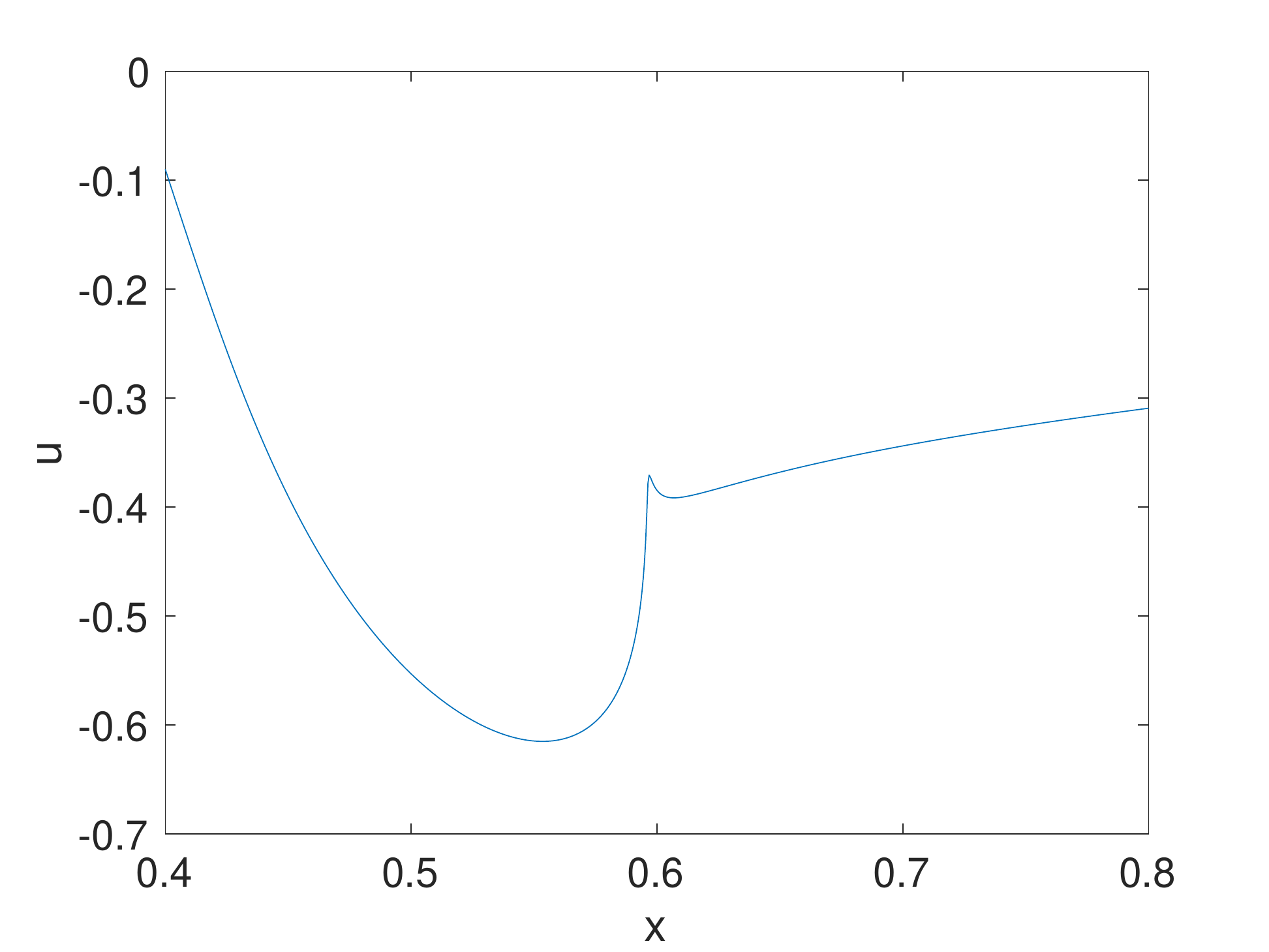}
 \caption{Solution to equation (\ref{eq:main}) with the $+$ sign for $\alpha=-0.5$ and 
 the initial data $u(x,0)=\exp(-x^{2})$ in dependence of $t$ on the 
 left, and a close-up of the solution at the final time $t=1.211$ on 
 the right.}
 \label{mfKdVm05gausst}
\end{figure}

The computation of this cusp formation is numerically challenging. We 
use $N=2^{16}$ Fourier modes for $x\in 5[-\pi,\pi]$ and 
$N_{t}=10^{4}$ time steps for $t\in[0,1.7]$. It is known that the 
Fourier coefficients of an essential singularity $u\sim 
(x-x_{s})^{\mu}$, $\mu\in\mathbb{R}/\mathbb{Z}$ in the complex plane 
for $x=x_{s}$, $x_{s}\in\mathbb{C}$ are of the form 
\begin{equation}
	|\hat{u}|\sim \frac{e^{-\delta |\xi|}}{|\xi|^{1+\mu}}
	\label{mufit}
\end{equation}
for $|\xi|\gg 1$ ($\Im x_{s}=\delta >0$). Sulem, Sulem and Frisch \cite{SSF} used this to 
characterize a singularity in solutions to hyperbolic equations via 
the coefficients of the discrete Fourier transform, see also 
\cite{KR} for a quantitative analysis. We fit the Fourier 
coefficients according to (\ref{mufit}) during the computation. The 
code is stopped once $\delta$ is slightly negative in the fitting. As 
discussed in \cite{KR}, the fitted value of $\mu$ is less reliable, 
but it is clear that it is positive ($\mu\sim0.21$). It cannot be 
excluded that this is compatible with the value $1/3$ known from 
generic shocks in solutions to the Burgers' equation, but in any 
case there is no $L^{\infty}$ blow-up here as expected. 

If we consider the same initial data for equation (\ref{eq:main}) 
with the $-$ sign (with the same numerical parameters), the situation 
changes somewhat. Now the maximum gets compressed into a cusp, 
whereas the oscillation near the maximum which developed the cusp in 
Fig.~\ref{mfKdVm05gausst} stays smooth. A close-up of the cusp at the 
time $0.8695$ can be seen on the right of the same figure. The 
computation is stopped at $t=0.8695$ since the fitting of the Fourier
coefficients according to (\ref{mufit}) produces a negative $\delta$. 
The fitted $\mu\sim0.359$ is close to $1/3$. 
\begin{figure}[htb!]
  \includegraphics[width=0.49\textwidth]{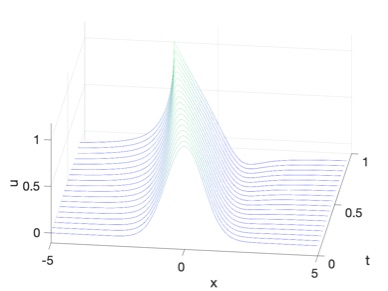}
  \includegraphics[width=0.49\textwidth]{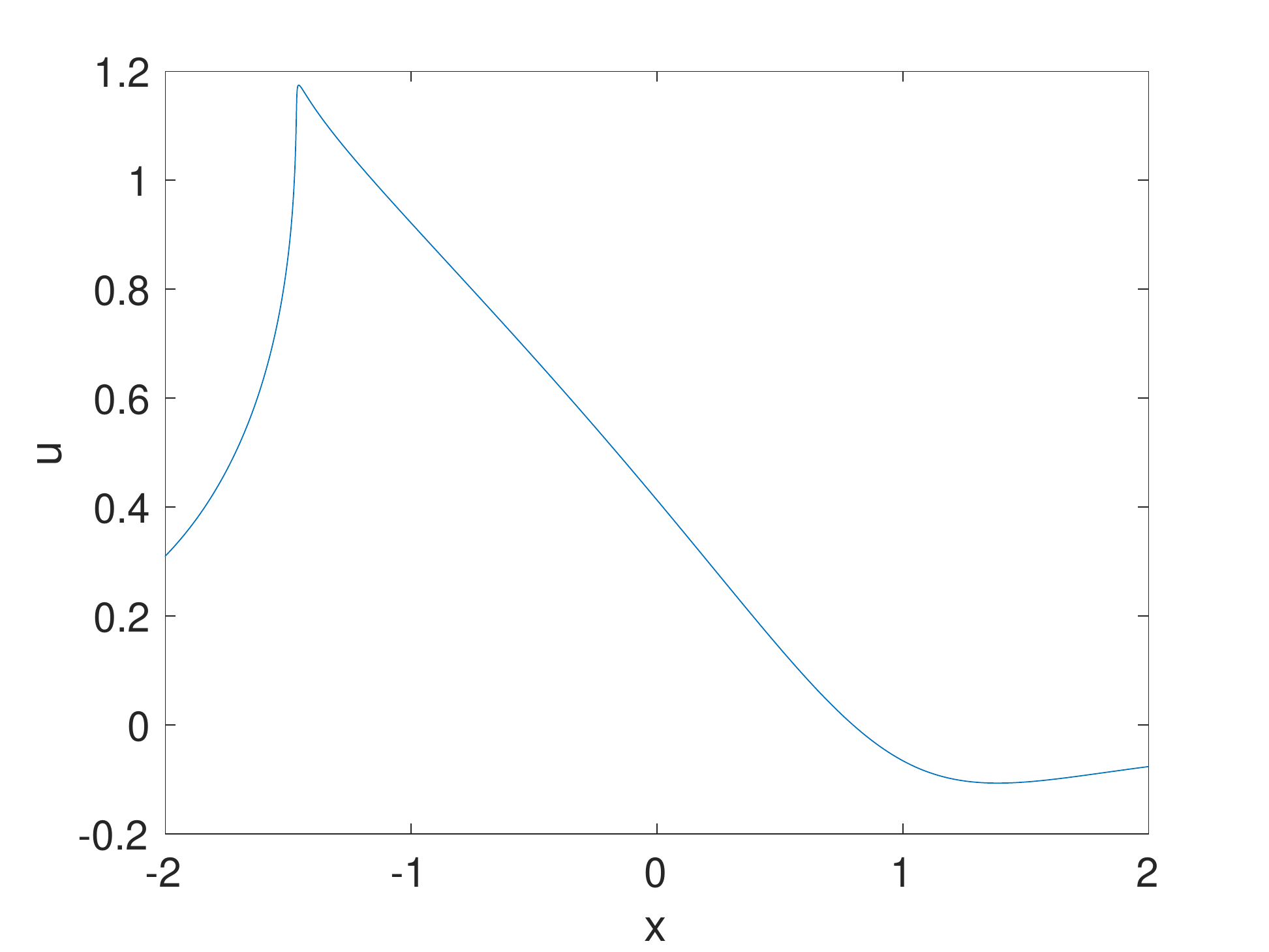}
 \caption{Solution to equation (\ref{eq:main}) with the $-$ sign for $\alpha=-0.5$ and 
 the initial data $u(x,0)=\exp(-x^{2})$ in dependence of $t$ on the 
 left, and a close-up of the solution at the final time $t=0.8695$ on 
 the right.}
 \label{mfKdVm05gausstm}
\end{figure}

If the dispersion is lowered, the situation stays qualitatively 
similar. For the $-$ sign in (\ref{eq:main}) and $\alpha=-0.8$, the code breaks for 
$t=0.8833$, and one gets  a fitted $\mu\sim 0.357$. The solution 
at the final time is shown on the left of Fig.~\ref{mfKdVm08gausst}. 
The same situation for the $+$ sign in (\ref{eq:main}) at time 
$t=2.0027$ is shown on the right of the same figure. The code breaks 
here for the shown time with a fitted $\mu\sim0.34$ (here we chose a 
slightly smaller computational domain $4[-\pi,\pi]$ in order to get 
somewhat higher resolution). 
\begin{figure}[htb!]
  \includegraphics[width=0.32\textwidth]{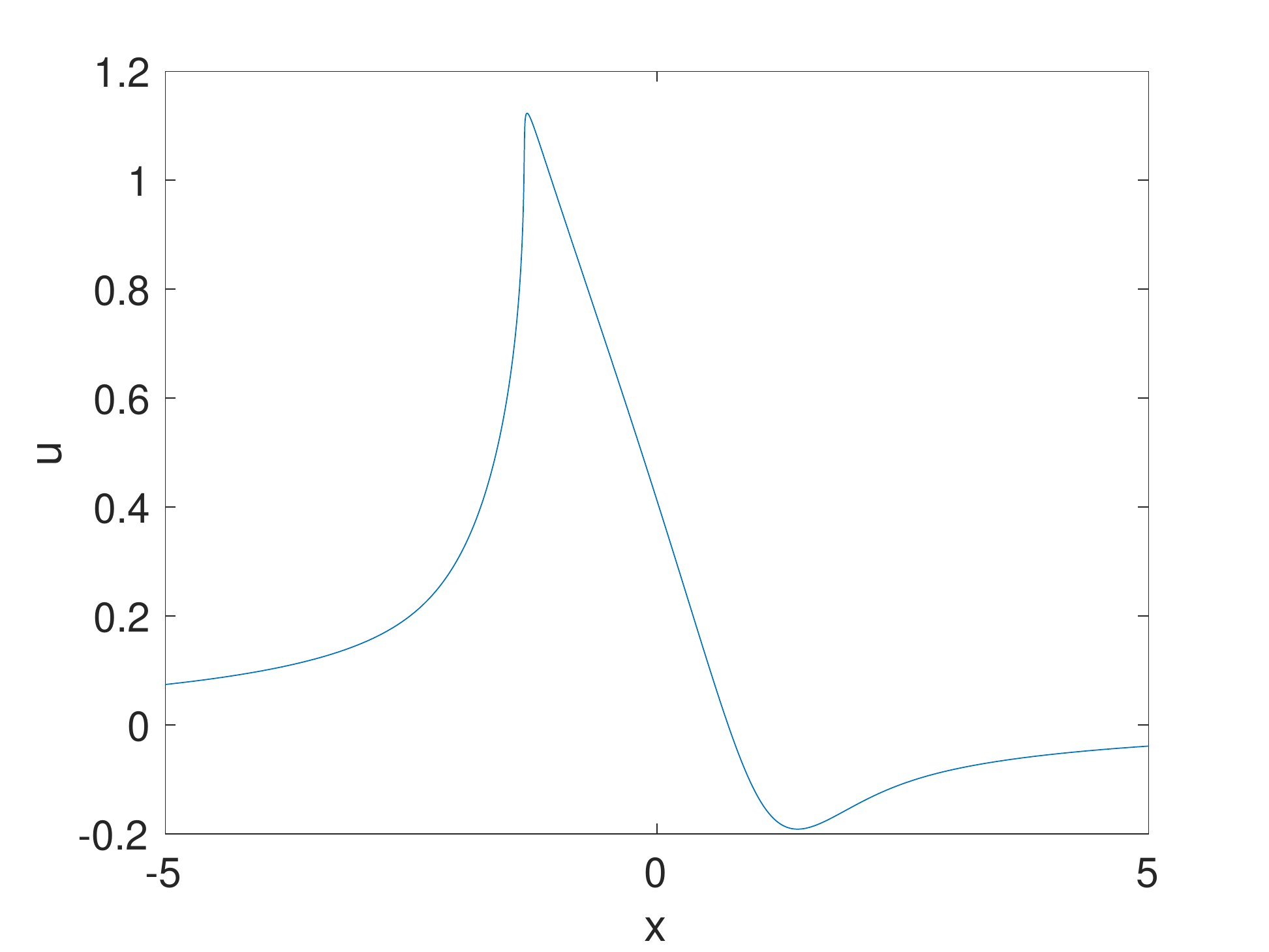}
  \includegraphics[width=0.32\textwidth]{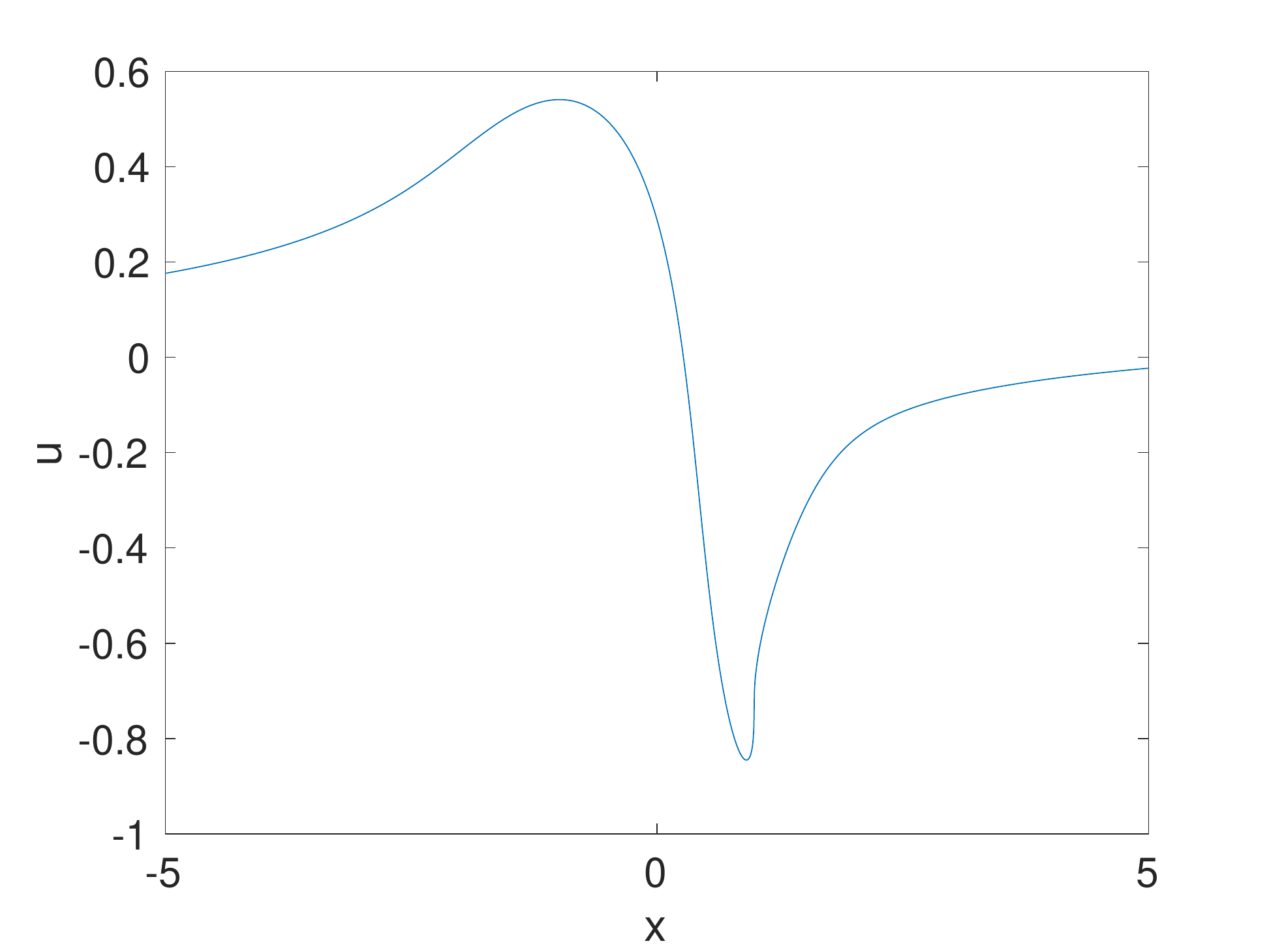}
 \includegraphics[width=0.32\textwidth]{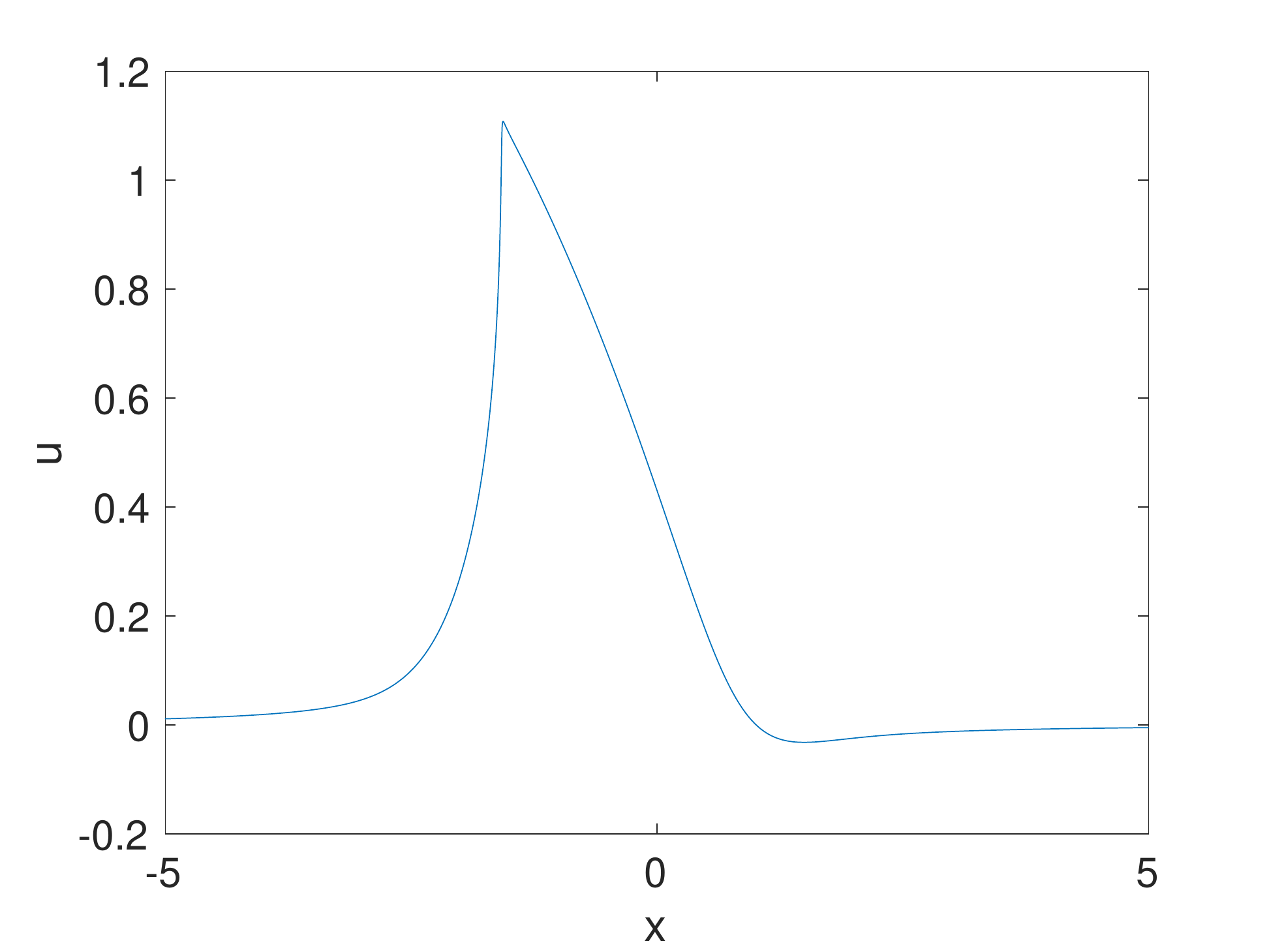}
 \caption{Solution to equation (\ref{eq:main}) for 
 $\alpha=-0.8$ and 
 the initial data $u(x,0)=\exp(-x^{2})$,  with the $-$ sign  at the final time $t=0.8833$ on 
 the left, with the $+$ sign at the final time $t=2.0027$ in the 
 middle; the solution for the equation with the $-$ sign 
 for the same initial data and $\alpha=-0.2$ for 
 (\ref{eq:main}) for $t=0.8468$ on the right.}
 \label{mfKdVm08gausst}
\end{figure}

In the case of stronger dispersion, say $\alpha=-0.2$, the situation 
is again similar for the Gaussian initial data and equation 
(\ref{eq:main}) with the $-$ sign. The code breaks for $t=0.8468$ 
with a fitted $\mu\sim0.29$. The solution at the final time can be 
seen on the right of Fig.~\ref{mfKdVm08gausst}. However, for the 
equation (\ref{eq:main}) with the $+$ sign, we do not get a shock, 
but a \emph{dispersive shock wave}, a zone of rapid modulated 
oscillations near the shock of the corresponding solution for the 
dispersionless equation for the same initial data. The solution for 
$t=1.5$ is shown in Fig.~\ref{mfKdVm02gausst}. This indicates that 
the solution stays smooth for all times in this case. 
\begin{figure}[htb!]
  \includegraphics[width=0.49\textwidth]{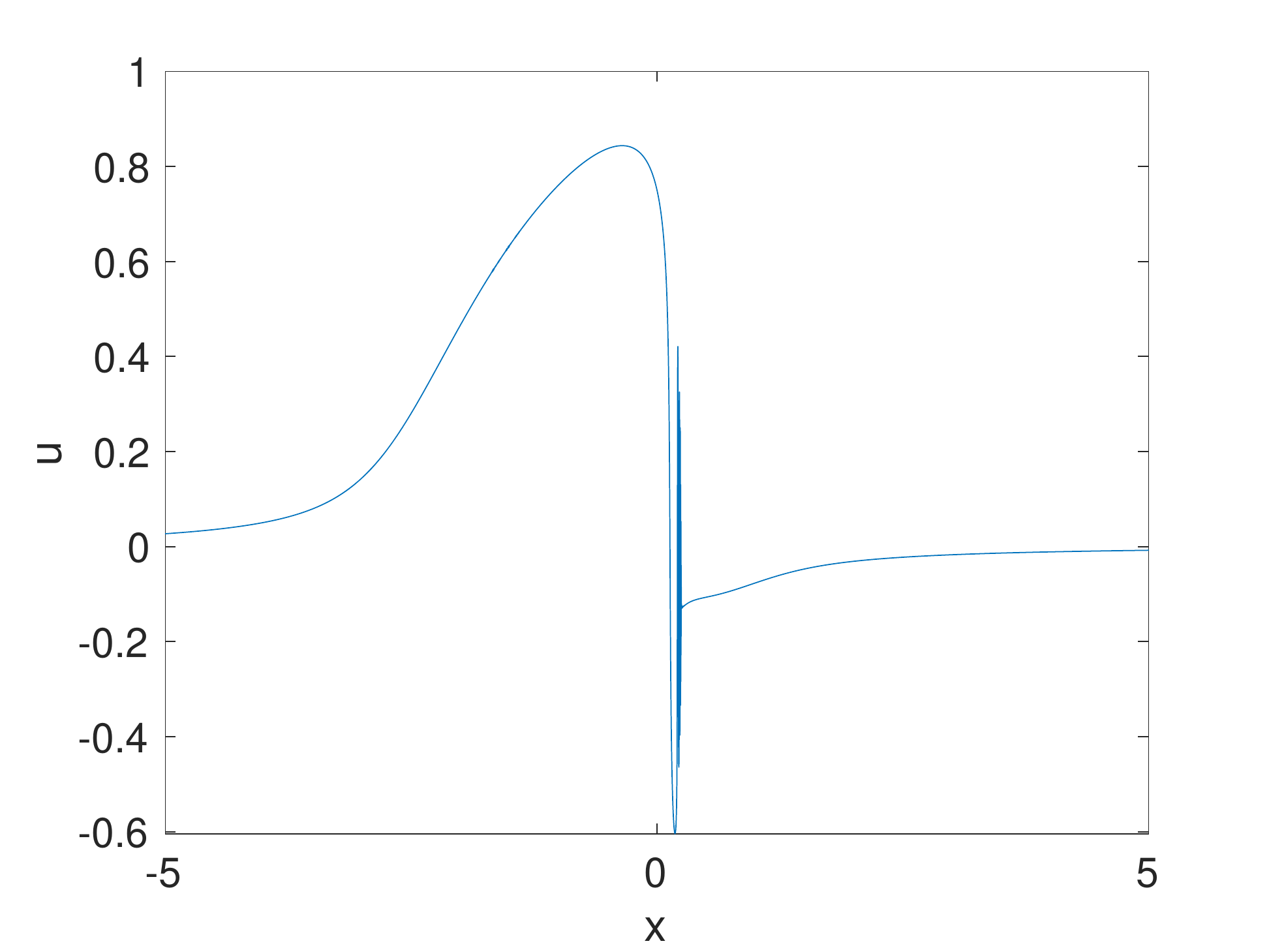}
  \includegraphics[width=0.49\textwidth]{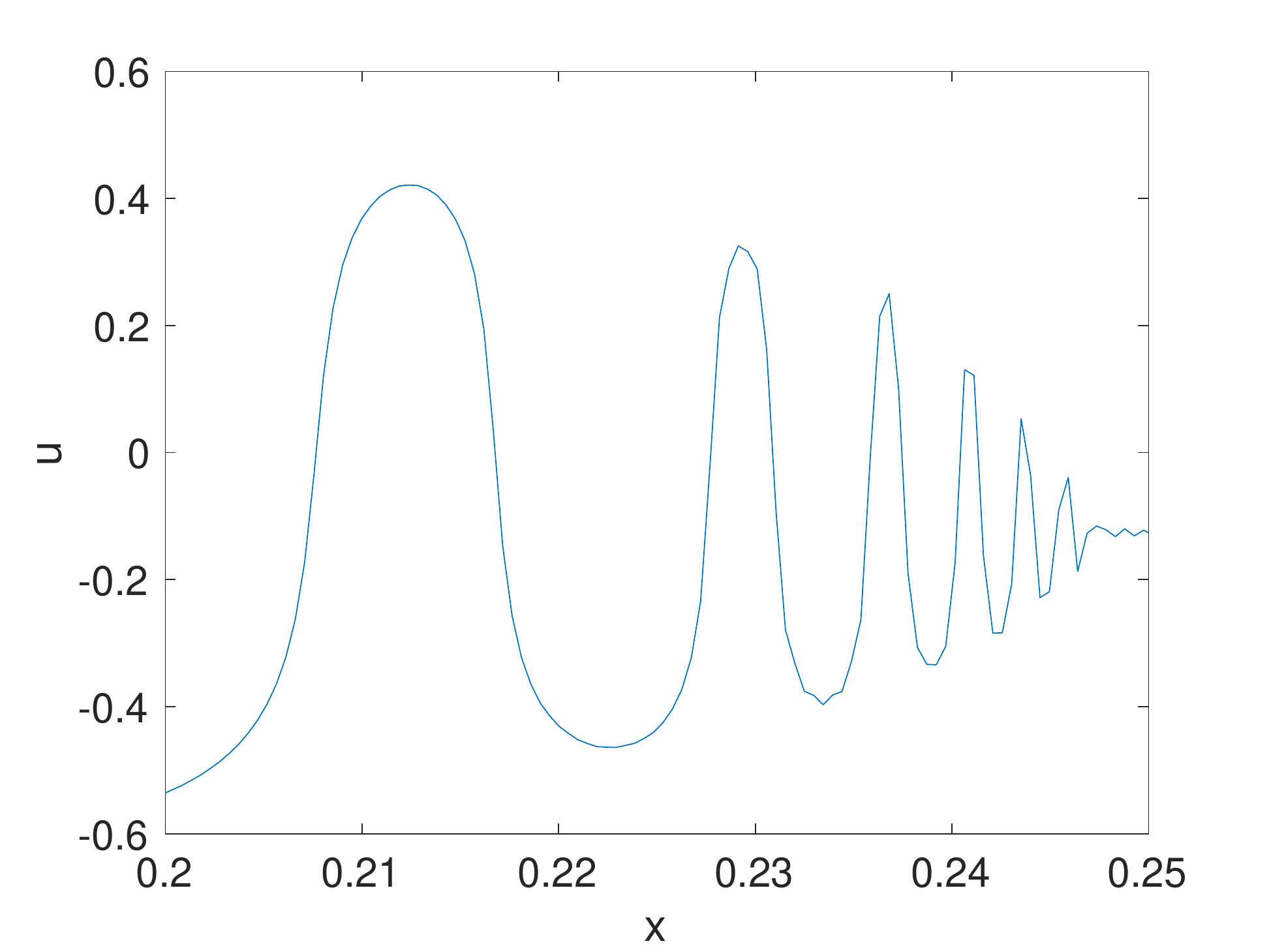}
 \caption{Solution to equation (\ref{eq:main}) with the $+$ sign for 
 $\alpha=-0.2$ and 
 the initial data $u(x,0)=\exp(-x^{2})$ for $t=1.5$ on the 
 left, and a close-up of the solution  on 
 the right.}
 \label{mfKdVm02gausst}
\end{figure}

This of course is consistent with Proposition \ref{th:previous}.

\section{Variants I: shock formation for the modified Burgers-Hilbert and Whitham equations}

In this section we extend the shock formation result in Theorem \ref{th:main} to the modified Burgers-Hilbert and Whitham equations. 

The modified Burgers-Hilbert equation reads as (note that the distinction between focusing and defocusing is irrelevant here):
\begin{align}\label{eq:mBH}
u_t+u^2u_x-\mathcal H u=0,
\end{align}
where $\mathcal H= \text{p.v.}\; \frac{1}{x}$ is the Hilbert transform with Fourier symbol $-i\text{sgn}\;\xi.$ The equation \eqref{eq:mBH} can be formally regarded as the limit case of \eqref{eq:main} by letting \(\alpha\rightarrow -1^+\). 

We first state the result on the shock formation for \eqref{eq:mBH}:
\begin{theorem}\label{th:mBH} \textup{(Rough version)} 
	There exists a wide class of functions \(\phi\in H^2(\R)\) with appropriate large positive amplitude \(\phi\) and negative slope \(\inf_{x\in\R}\phi^\prime(x)\) such that the Cauchy problem for the equation \eqref{eq:mBH} with initial data \(u(0,x)=\phi(x)\) exhibits shock formation. 
	
	\textup{(Precise version)} Let \(\delta\) be a sufficiently small positive number. Assume \(\bar{x}_1\) and \(\bar{x}_2\) are the largest and smallest numbers such that
	\(\overline{\{x: \phi^\prime(x)<0\}}\subset [\bar{x}_1,\bar{x}_2] \).
	Let \(\phi\in H^2(\R)\) satisfy the \textbf{slope condition}
	\begin{align*}
	&-\inf_{x\in\R}\phi^\prime(x)>\delta^{-1}f_1,\\ 
	&-\inf_{x\in\R}\phi^\prime(x)>(1-\delta)^{-2}f_2,\\ 
	&-\inf_{x\in\R}\phi^\prime(x)>(1-\delta)^{-3}f_3,
	\end{align*}
	and the \textbf{local amplitude condition}
	\begin{align}
	&\phi(x)< B-(1-\delta)^{-2}f_4,\label{mBH4}\\
	&\phi(x)> A+(1-\delta)^{-2}f_4. \label{mBH5}
	\end{align}
	for all \(x\in [\bar{x}_1,\bar{x}_2]\).
	Here the functions \((f_1,f_2,f_3,f_4)\) are homogeneous in each 
	of its arguments of order \((1/2,0,0,0)\), and have the following explicit formulae	
	\begin{equation*}
	\begin{aligned}
	f_1=:f_1(\|\phi\|_{H^2},\|\phi^\prime\|_{L^2},\|\phi^{\prime\prime}\|_{L^2})
	=\big(C_s\|\phi\|_{H^2}+4\|\phi^{\prime}\|_{L^2}+64C_m\|\phi^{\prime\prime}\|_{L^2}\big)^{1/2},
	\end{aligned}
	\end{equation*}
	\begin{equation*}
	\begin{aligned}
	f_2=:f_2(C_0^{-1}\|\phi\|_{L^2},C_0^{-1}\|\phi^\prime\|_{L^2})=6C_0^{-1}\|\phi\|_{L^2}+ 24C_mC_0^{-1}\|\phi^{\prime}\|_{L^2},
	\end{aligned}
	\end{equation*}
	\begin{equation*}
	\begin{aligned}
	f_3=:f_3(C_1^{-1}\|\phi^\prime\|_{L^2},
	C_1^{-1}\|\phi^{\prime\prime}\|_{L^2})=8C_1^{-1}\|\phi^{\prime}\|_{L^2}+ 128C_mC_1^{-1}\|\phi^{\prime\prime}\|_{L^2},
	\end{aligned}
	\end{equation*}
	\begin{equation*}
	\begin{aligned}
	f_4&=:f_4\big(\|\phi\|_{L^2}\big(-\inf_{x\in\R}\phi^\prime(x)\big)^{-1},\|\phi^{\prime}\|_{L^2}\big(-\inf_{x\in\R}\phi^\prime(x)\big)^{-1}\big)\\
	&=\big(-\inf_{x\in\R}\phi^\prime(x)\big)^{-1}(3\|\phi\|_{L^2}+12C_m\|\phi^{\prime}\|_{L^2}),
	\end{aligned}
	\end{equation*}
	where \(C_0>0\) and \(C_1>0\) satisfying	
	\begin{equation*}\label{mBH6}
	\begin{aligned}
	\|\phi\|_{L^\infty}\leq \frac{C_0}{2},\quad 
	\|\phi^\prime\|_{L^\infty}\leq \frac{C_1}{2},
	\end{aligned}
	\end{equation*}	
	and \(A>0\) and \(B>0\) satisfying
	\begin{equation*}\label{mBH7}
	\begin{aligned}
	A^2>8\delta B^2,\quad	 B> A+2(1-\delta)^{-2}f_4,
	\end{aligned}
	\end{equation*}
	and \(C_s\) and \(C_m\) are the best embedding constants of Sobolev inequality and Morerry inequality
	\begin{equation*}
	\begin{aligned}
	\|f\|_{L^\infty(\R)}\leq C_s\|f\|_{H^1(\R)},
	\end{aligned}
	\end{equation*}
	and
	\begin{equation*}
	\begin{aligned}
	|f|_{\dot C^{0,\frac{1}{2}}(\R)}\leq C_m\|f_x\|_{L^2(\R)},
	\end{aligned}
	\end{equation*} 
	where \(|\cdot|_{\dot C^{0,\frac{1}{2}}(\R)}\) the usual H\"older semi norm.  
	Then the solution for the equation  \eqref{eq:mBH} with initial data \(u(0,x)=\phi(x)\) exhibits shock formation at some time \(T>0\) with
	\begin{equation*}\label{shock time of mBH}
	\begin{aligned}
	(2B+\delta)^{-1}\big(-\inf_{x\in\R}\phi^\prime(x)\big)^{-1}
	<T<(AB^{-1}-\delta)^{-1}(2A-\delta)^{-1}\big(-\inf_{x\in\R}\phi^\prime(x)\big)^{-1},
	\end{aligned}
	\end{equation*} 
	and at some location \(x_*\) satisfying
	\begin{align*}
	\bar{x}_1-C_0^2T
	\leq x_*\leq \bar{x}_2+C_0^2T.
	\end{align*}	
	Moreover, we have the blow-up rate estimate
	\begin{align*}
	(2B+\delta)^{-1}(T-t)^{-1}\leq\|\partial_xu(\cdot,t)\|_{L^\infty}\leq (AB^{-1}-\delta)^{-1}(2A-\delta)^{-1}(T-t)^{-1}, 
	\end{align*}
	as \(t\rightarrow T^{-}\).
\end{theorem}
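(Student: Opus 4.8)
My plan is to transplant the entire particle-path machinery of the proof of Theorem~\ref{th:main} and to re-prove only the nonlocal estimates, exploiting that \eqref{eq:mBH} is the endpoint $\alpha\to-1^+$ of \eqref{eq:main}. I would again define the flow $X(t,x)$ by \eqref{eq:particle}, set $v_0=u(t,X)$, $v_1=\partial_xu(t,X)$ and $m(t)=\inf_xv_1=m(0)q^{-1}(t)$ as in \eqref{0}--\eqref{1}, and note that along characteristics the system \eqref{3.1}--\eqref{3.2} persists with
\[
K_0(t,x)=-\mathcal{H} u(t,X(t,x)),\qquad K_1(t,x)=-\mathcal{H}\partial_xu(t,X(t,x)).
\]
Lemmas~\ref{le:a1} and \ref{le:a2}, the bootstrap \eqref{6}--\eqref{9}, the contradiction argument proving the $K_1$-smallness \eqref{4}, and the extraction of the shock time, location and rate \eqref{shock time}--\eqref{shock blow-up rate} all use \emph{only} the ODE structure \eqref{3.1}--\eqref{3.2} together with pointwise bounds on $K_0,K_1$; they therefore transfer unchanged once the latter are re-established. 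So the whole theorem reduces to the analogue of Lemma~\ref{le:fKdV-nonlocal}: bounds $|K_0|\lesssim q^{0}$ and $|K_1|\lesssim q^{-2}$ with constants expressed through $\|\phi\|_{L^2},\|\phi'\|_{L^2},\|\phi''\|_{L^2}$.

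The reason this needs a new argument is precisely that the near/far split of the fractional kernel $\mathrm{sgn}(y)|y|^{-2-\alpha}$ produced the factors $(1+\alpha)^{-1}$ in \eqref{10} and \eqref{12}, which blow up as $\alpha\to-1$. For the Hilbert kernel I would write $\mathcal{H} u(X)=\tfrac1\pi\,\mathrm{p.v.}\int z^{-1}\bigl(u(X)-u(X-z)\bigr)\,dz$ and cut at a scale $\eta$. On the tail $|z|\ge\eta$ the kernel fails to be in $L^1$ but lies in $L^2$, so Cauchy--Schwarz bounds it by $\eta^{-1/2}\|u\|_{L^2}$ for $K_0$ and by $\eta^{-1/2}\|\partial_xu\|_{L^2}$ for $K_1$; here the exact conservation $\|u(t)\|_{L^2}=\|\phi\|_{L^2}$ (immediate since $\mathcal{H}$ is skew-adjoint and the flux $u^3/3$ is an exact derivative) produces the $\|\phi\|_{L^2}$ terms in $f_2,f_4$. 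On the core $|z|<\eta$ I would deliberately avoid a Lipschitz bound, which would cost $\|\partial_x^2u\|_{L^\infty}$ and hence $H^3$; instead the Morrey inequality $|f|_{\dot C^{0,1/2}}\le C_m\|f_x\|_{L^2}$ gives $\int_{|z|<\eta}|z|^{-1/2}\,dz\cdot C_m\|\partial_xu\|_{L^2}$ for $K_0$ and the same with $\|\partial_x^2u\|_{L^2}$ for $K_1$. This is exactly why $\phi\in H^2$ suffices and why the Hölder constant $C_m$ and the norms $\|\phi'\|_{L^2},\|\phi''\|_{L^2}$ enter $f_1$--$f_4$.

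To feed the core estimates I must control $\|\partial_xu\|_{L^2}$ and $\|\partial_x^2u\|_{L^2}$ up to the shock. I would run the energy scheme of \eqref{the third derivative}--\eqref{18} one order lower: differentiate \eqref{eq:mBH} once and twice, test against $\partial_xu$ and $\partial_x^2u$, use that $\mathcal{H}$ contributes nothing to the $L^2$ balance (skew-adjointness) while the cubic flux leaves a term controlled through the slope lower bound $v_1\ge m(t)=m(0)q^{-1}(t)$, and close with the integral estimates \eqref{a7}--\eqref{a8}. Optimising the cut-off $\eta$ as the appropriate power of $q(t)$ then turns the tail/core split into exactly $|K_0|\lesssim q^{0}$ and $|K_1|\lesssim q^{-2}$, after which the contradiction argument of \eqref{4}--\eqref{23}, now driven by the hypotheses and the explicit $f_1,\dots,f_4$ of Theorem~\ref{th:mBH}, closes the bootstrap and yields shock formation with the stated time, location and rate.

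The step I expect to be hardest is the borderline behaviour of the Hilbert kernel at this endpoint. Because $1/z$ is only just non-integrable at infinity, everything rests on systematically trading $L^\infty$ for $L^2$ and Lipschitz for Hölder-$\tfrac12$, and on checking that the resulting (possibly $q$-dependent) growth of $\|\partial_x^2u\|_{L^2}$ is still compatible with the target exponent $q^{-2}$ in $K_1$. It is reassuring that the condition $8A^2>7B$ from \eqref{c7}, which in the fKdV proof guaranteed the admissible exponent $(1-7BA^{-2}/4)(1+\alpha)\ge-1$, is absent from the hypotheses of Theorem~\ref{th:mBH}: the $|z|^{-1/2}$ core integration and the lower-order energy estimate change this bookkeeping, and verifying that the exponents indeed land on $q^{0}$ and $q^{-2}$ without it is the crux of the matter.
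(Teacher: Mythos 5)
Your strategy is indeed the paper's: the paper keeps the particle-path/bootstrap machinery of Theorem \ref{th:main} verbatim (noting that Lemmas \ref{le:a1}--\ref{le:a2} persist), and reduces everything to the two nonlocal bounds, which it states as $|K_0|\le(2\|\phi\|_{L^2}+8C_m\|\phi'\|_{L^2})\,q^{-1/3}$ and $|K_1|\le(4\|\phi'\|_{L^2}+64C_m\|\phi''\|_{L^2})\,q^{-2}$ and whose proof it declares identical to that of \cite{SW2}; the ingredients there are exactly the ones you list (Cauchy--Schwarz against the $L^2$ tail of the kernel, Morrey/H\"older-$\tfrac12$ on the core, conservation of $\|u\|_{L^2}$, derivative energy estimates closed by \eqref{a7}--\eqref{a8}, and a cutoff chosen as a power of $q$). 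So the architecture of your proposal matches the paper's proof.

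Two of your detailed claims are nevertheless wrong as stated, and both sit exactly at what you call the crux. First, $|K_0|\lesssim q^{0}$ is not what this splitting can give: the core term is $\sim C_m\eta^{1/2}\|u_x(t)\|_{L^2}$, and $\|u_x(t)\|_{L^2}$ admits no $q$-uniform bound (the energy inequality only yields growth like a negative power of $q$, and at a $|x-x_*|^{1/3}$ cusp this norm in fact diverges), so no choice of $\eta$ produces a $q$-independent bound; the correct output is a small negative power, $q^{-1/3}$ in the paper. This is harmless for the bootstrap, since \eqref{a7} only requires an exponent $s<1$, but note that the stated $f_2,f_4$ encode precisely $s=1/3$: the factor $(1-s)^{-1}=3/2$ converts the lemma's constant $(2\|\phi\|_{L^2}+8C_m\|\phi'\|_{L^2})$ into $f_4$'s $(3\|\phi\|_{L^2}+12C_m\|\phi'\|_{L^2})$, so a different exponent proves a theorem with different constants. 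Second, the derivative energy identities for the cubic flux are
\begin{equation*}
\frac12\frac{d}{dt}\|u_x\|_{L^2}^2=-\int_\R u\,u_x^3\,dx,
\qquad
\frac12\frac{d}{dt}\|u_{xx}\|_{L^2}^2=-5\int_\R u\,u_x\,u_{xx}^2\,dx,
\end{equation*}
so the coefficient is $uu_x$, not $u_x$, and the slope lower bound $v_1\ge m(t)$ alone controls nothing: where $u<0$ and $u_x>0$ (which the \emph{local} amplitude condition cannot exclude off $[\bar x_1,\bar x_2]$ --- indeed $\phi\in L^2$ forces $\phi$ to leave $[A,B]$ there) the integrands are positive and can be large. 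One must instead invoke the bootstrap bounds $\|u\|_{L^\infty}\le C_0$ and $\|u_x\|_{L^\infty}\le C_1q^{-1}$, whence the growth exponents of $\|u_x\|_{L^2}$ and $\|u_{xx}\|_{L^2}$ come out proportional to $C_0C_1/|m(0)|$ rather than the universal constants $\tfrac12$, $\tfrac52$ of the quadratic case treated in \cite{SW2}; verifying that these exponents are small enough for the cutoff optimization to land on $q^{-1/3}$ and $q^{-2}$ is the genuinely unverified bookkeeping in your plan (it is also the step the paper compresses into the citation of \cite{SW2}, where, as you observe, the analogue of the condition $8A^2>7B$ is dropped without comment).
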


It is standard to show that the Cauchy problem for the equation \eqref{eq:mBH} with the initial data \(u(0,x)=\phi(x)\) is well-posed in the class \(C([0,T): H^2(\R))\) for some \(T>0\) which will be denoted the maximal time of existence in the following. Using the same notations \(X(t,x),v_0(t,x),v_1(t,x),m(t)\) and \(q(t)\) as \eqref{eq:particle}-\eqref{1}, it then 
follows from \eqref{eq:mBH} that \eqref{3.1} and \eqref{3.2} hold, in which \(K_0(t,x)\) in \eqref{3.3} and \(K_1(t,x)\) in \eqref{3.4} shall be respectively replaced by
\begin{align}
&K_0(t,x)
=\int_\R\frac{\mathrm{sgn}(y)}{|y|}[u(t,X(t,x))-u(t,X(t,x)-y)]\, d y,\label{mBH8}\\
&K_1(t,x)
=\int_\R \frac{\mathrm{sgn}(y)}{|y|}[\partial_xu(t,X(t,x))-\partial_xu(t,X(t,x)-y)]\, d y \label{mBH9}.
\end{align}

Analogously, to prove Theorem \ref{th:mBH}, we need to show \eqref{4}
via a contradiction argument by assuming \eqref{5} conversely. There are two main tasks in closing the proof of \eqref{5}. The first task is to estimate the bound of \(q(t)\), indeed one can check that Lemma \ref{le:a1} and Lemma \ref{le:a2} still hold. 
The other task is to show the following estimates on the nonlocal terms:	
\begin{lemma} For all \(t\in[0,T_2]\) and \(x\in\R\),  we have 
	\begin{equation}\label{mBH-K_0}
	\begin{aligned}
	|K_0(t,x)|\leq (2\|\phi\|_{L^2}+8C_m\|\phi^{\prime}\|_{L^2})q(t)^{-\frac{1}{3}},
	\end{aligned}
	\end{equation}
	and
	\begin{equation}\label{mBH-K_1}
	\begin{aligned}
	|K_1(t,x)|
	\leq (4\|\phi^{\prime}\|_{L^2}+64C_m\|\phi^{\prime\prime}\|_{L^2})q(t)^{-2},
	\end{aligned}
	\end{equation}
	where \(K_0(t,x)\) and \(K_1(t,x)\) are defined by \eqref{mBH8} and \eqref{mBH9} respectively.	
\end{lemma}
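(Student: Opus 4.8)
The plan is to follow the near/far splitting of Lemma~\ref{le:fKdV-nonlocal}, but to adapt every step to the endpoint kernel $\mathrm{sgn}(y)/|y|=1/y$, which is the formal limit $\alpha\to-1^+$ of $\mathrm{sgn}(y)/|y|^{2+\alpha}$. Fixing $t\in[0,T_2]$, $x\in\R$, and a cutoff radius $\eta>0$ to be chosen, I would write $|K_0|\le A_1+A_2$ and $|K_1|\le B_1+B_2$ as before, with $A_1,B_1$ the integrals over $|y|<\eta$ and $A_2,B_2$ those over $|y|\ge\eta$. Two features replace the estimates available for $-1<\alpha<0$. First, on the far region I would exploit the oddness of $1/y$: since $\mathrm{p.v.}\int_{|y|\ge\eta}y^{-1}\,dy=0$, the constant term $u(t,X(t,x))$ drops out and $A_2$ reduces to $\bigl|\int_{|y|\ge\eta}y^{-1}u(t,X(t,x)-y)\,dy\bigr|$, which Cauchy--Schwarz bounds by $(\int_{|y|\ge\eta}y^{-2}\,dy)^{1/2}\|u(t)\|_{L^2}=\sqrt2\,\eta^{-1/2}\|u(t)\|_{L^2}$, and likewise $B_2\le\sqrt2\,\eta^{-1/2}\|\partial_x u(t)\|_{L^2}$. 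The divergence of $\int_{|y|\ge\eta}|y|^{-1}\,dy$, which destroys the crude $L^\infty$ bound used for $\alpha>-1$, is exactly what this cancellation circumvents; and since the $L^2$ norm is conserved for \eqref{eq:mBH}, $\|u(t)\|_{L^2}=\|\phi\|_{L^2}$, so $A_2$ carries no growth in $q$. Second, on the near region I would replace the Lipschitz bound by the H\"older-$\tfrac12$ seminorm: $\int_{|y|<\eta}|y|^{-1}|y|^{1/2}\,dy=4\eta^{1/2}$ is integrable, so the Morrey inequality gives $A_1\le 4C_m\eta^{1/2}\|\partial_x u(t)\|_{L^2}$ and $B_1\le 4C_m\eta^{1/2}\|\partial_x^2 u(t)\|_{L^2}$. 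This is where working only in $H^2$ suffices.

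It then remains to control $\|\partial_x u(t)\|_{L^2}$ and $\|\partial_x^2 u(t)\|_{L^2}$ and to choose $\eta$ as a power of $q(t)$. Differentiating \eqref{eq:mBH}, multiplying by $\partial_x u$ resp.\ $\partial_x^2 u$, integrating, and using that $\mathcal H$ is skew-adjoint (so the dispersive terms drop) together with integration by parts, I would obtain the energy identities $\tfrac12\tfrac{d}{dt}\|\partial_x u\|_{L^2}^2=-\int_\R u(\partial_x u)^3\,dx$ and $\tfrac12\tfrac{d}{dt}\|\partial_x^2 u\|_{L^2}^2=-5\int_\R u\,\partial_x u\,(\partial_x^2 u)^2\,dx$. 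Writing the coefficient as $u\,\partial_x u$, using $\|u(t)\|_{L^\infty}\le C_0$ from \eqref{6} and the pointwise lower bound $\partial_x u\ge m(t)=m(0)q^{-1}(t)$ from \eqref{1}, I would bound each right-hand side by a multiple of $|m(0)|q^{-1}(t)$ times the corresponding $L^2$ norm squared. A Gr\"onwall argument combined with the integral estimate \eqref{a8}, in which the factor $m^{-1}(0)$ cancels the $|m(0)|$ produced by the energy bound and leaves a clean power of $q(t)$, then yields $\|\partial_x u(t)\|_{L^2}$ and $\|\partial_x^2 u(t)\|_{L^2}$ bounded by $\|\phi'\|_{L^2}$ and $\|\phi''\|_{L^2}$ times fixed negative powers of $q(t)$. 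Inserting these into $A_1,B_1$ and balancing against $A_2,B_2$ by taking $\eta$ a suitable power of $q(t)$ produces the exponents $q(t)^{-1/3}$ in \eqref{mBH-K_0} and $q(t)^{-2}$ in \eqref{mBH-K_1}, with the stated constants.

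The main obstacle is the energy step. In contrast to the fKdV case, where the analogous identity carries the single factor $\partial_x u$ whose pointwise lower bound $m(t)$ is available globally, the cubic nonlinearity forces the extra factor $u$, and $u$ has no global sign: under the hypotheses $u$ is positive only on $[\bar x_1,\bar x_2]$ (by the amplitude condition \eqref{mBH4}--\eqref{mBH5}, which guarantees $A\le v_0\le B$), while it is negative for large $x$. Extracting a growth rate genuinely proportional to $|m(t)|$ — so that the cancellation with $m^{-1}(0)$ in \eqref{a8} occurs and the resulting power of $q(t)$ stays below the thresholds dictated by the conditions on $A$ and $B$ — is therefore the delicate point, and is precisely the reason the positivity of $v_0$ (hence the amplitude condition, absent in the fKdV analysis) must be invoked. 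Once the sign of $u\,\partial_x u$ in the shock region is controlled by $C_0\,m(t)$, and the complementary region, where $\partial_x u\ge m(t)$ is still available, is handled, the remaining steps are the routine optimizations of Lemma~\ref{le:fKdV-nonlocal}.
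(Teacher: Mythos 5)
Your framework is indeed the one behind the paper's (two\nobreakdash-line) proof, which only records that $u(t)$ stays in $L^2(\R)$ because the amplitude condition \eqref{mBH4}--\eqref{mBH5} is local, and then defers wholesale to \cite{SW2}: the near/far splitting, the principal-value cancellation plus Cauchy--Schwarz on $\{|y|\ge\eta\}$, the Morrey bound on $\{|y|<\eta\}$, and the Gr\"onwall-plus-\eqref{a8} control of $\|u_x\|_{L^2},\|u_{xx}\|_{L^2}$ are exactly that machinery, and your identities $\tfrac12\tfrac{d}{dt}\|u_x\|_{L^2}^2=-\int_\R u\,u_x^3\,dx$ and $\tfrac12\tfrac{d}{dt}\|u_{xx}\|_{L^2}^2=-5\int_\R u\,u_x\,u_{xx}^2\,dx$ are the correct cubic ones. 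The genuine gap is the energy step, precisely the point you call delicate and then dismiss in one clause. To run Gr\"onwall you need a pointwise upper bound $-u\,u_x\le c\,|m(t)|$ on all of $\R$ with $c$ controlled by the hypotheses of Theorem \ref{th:mBH}. Where $u>0$ this follows from \eqref{8} and $u_x\ge m(t)$. But on the set where $u<0$ and $u_x>0$ --- which the hypotheses allow, since $u(t)\in L^2$ while $u\ge A>0$ on the shock region --- one has $-u\,u_x=|u|\,u_x$, and the inequality you invoke there, $u_x\ge m(t)$, points the \emph{wrong way}: multiplied by the negative factor $u$ it gives $u\,u_x\le u\,m(t)$, a lower bound for $-u\,u_x$, not an upper one. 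What is needed is an upper bound on the positive part of $u_x$, and the only one available inside the bootstrap is \eqref{9}, $\|u_x\|_{L^\infty}\le C_1q^{-1}(t)$. The Gr\"onwall rate then becomes of order $C_0C_1q^{-1}(t)$ instead of $(\mathrm{const})\,|m(0)|\,q^{-1}(t)$, the factor $m^{-1}(0)$ in \eqref{a8} no longer cancels, and the exponent of $1/q$ you obtain is of order $C_0C_1\big(|m(0)|(2A-\delta)(AB^{-1}-\delta)\big)^{-1}$, a data-dependent quantity that no hypothesis of Theorem \ref{th:mBH} bounds.

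Nor can this be repaired by positivity alone, so the difficulty is not just the sign of $u$. Even granting $u\ge0$ on all of $\R$, the best available rate for $\|u_x\|_{L^2}^2$ is $2C_0|m(t)|$, and \eqref{a8} then gives $\|u_x(t)\|_{L^2}\le\|\phi'\|_{L^2}\big[(AB^{-1}-\delta)q(t)\big]^{-\sigma}$ with $\sigma=C_0(2A-\delta)^{-1}(AB^{-1}-\delta)^{-1}\ge C_0B/(2A^2)>B/A>1$, using $C_0\ge2\|\phi\|_{L^\infty}>2A$ (forced by \eqref{mBH5}) and $B>A$. By contrast, any admissible choice of $\eta$ in your scheme forces $\|u_x\|_{L^2}\lesssim\|\phi'\|_{L^2}q^{-2/3}$ (from the two constraints in the $K_0$ estimate) and $\|u_{xx}\|_{L^2}\lesssim\|\phi''\|_{L^2}q^{-10/3}$, i.e.\ exponents at most $2/3$ and $10/3$; so the stated powers $q^{-1/3}$, $q^{-2}$ and data-independent constants cannot emerge this way. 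Note also that Theorem \ref{th:mBH}, unlike Theorem \ref{th:main} whose hypothesis \eqref{c7} contains $8A^2>7B$ precisely to cap such an exponent, imposes no relation between $A,B,C_0,C_1,m(0)$, so the ``thresholds dictated by the conditions on $A$ and $B$'' you appeal to do not exist here. The constants and exponents in \eqref{mBH-K_0}--\eqref{mBH-K_1} are those of the quadratic case of \cite{SW2}, where the factor $u$ is absent, the rates are exactly $|m(t)|$ and $5|m(t)|$, and the exponents come out as $1/2$ and $5/2$, safely below the thresholds; justifying the same bounds for the cubic equation is exactly what your proposal (and, in fairness, the paper's bare ``identical to \cite{SW2}'') leaves unproven.
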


\begin{proof} Benefiting from the local amplitude condition \eqref{mBH4}-\eqref{mBH5}, the solution \(u\) can still live in \(L^2(\R)\). The proof of \eqref{mBH-K_0} and \eqref{mBH-K_1} is identical to that of \cite{SW2}. 
	
\end{proof}

We now turn to the modified Whitham equation which reads
\begin{equation}\label{eq:mWhitham}
\partial_tu+u^2\partial_xu+\int_\R K(x-y)\partial_y u(y,t)\,dy=0,
\end{equation}
where 
\begin{align*}
K(x)=\frac{1}{\sqrt{2\pi}}\int_\R e^{\mathrm{i}x\xi}\sqrt{\frac{\tanh \xi}{\xi}}\, d \xi.
\end{align*}

In order to deal with the modified Whitham equation  we first collect the following property of \(K(x)\) \cite{Hur} (originally appeared in \cite{EWah}):

\vspace{0.3cm}

\begin{lemma}\label{le:Hur} There exist constants \(L_0, L_\infty>0\) such that
	\begin{equation*}
	\begin{aligned}
	K(x)\leq \frac{L_0}{\sqrt{|x|}}\quad   \mathrm{and}\quad |K^\prime(x)|\leq \frac{L_0}{\sqrt{|x|^3}},\quad  \mathrm{for}\   0<|x|\leq 1,
	\end{aligned}
	\end{equation*}
	and
	\begin{equation*}
	\begin{aligned}
	\int_1^\infty |K^\prime(x)|\, d x\leq L_\infty.
	\end{aligned}
	\end{equation*}	
\end{lemma}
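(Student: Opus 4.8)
The plan is to read off the behaviour of $K$ directly from that of its symbol $m(\xi)=\sqrt{\tanh\xi/\xi}$. Since $\tanh\xi/\xi$ is even, positive and real-analytic on all of $\R$ (with value $1$ at $\xi=0$), the symbol $m$ is even, real, smooth and positive; hence $K$ is real and even and it suffices to treat $x>0$. The only obstruction to $K$ being a well-behaved function is the slow decay of $m$: writing $\tanh\xi=1-2e^{-2\xi}+\bigO(e^{-4\xi})$ as $\xi\to+\infty$ gives $m(\xi)=|\xi|^{-1/2}\sqrt{\tanh\xi}=|\xi|^{-1/2}\bigl(1+\bigO(e^{-2|\xi|})\bigr)$, and after differentiation $|m^{(k)}(\xi)|\lesssim|\xi|^{-1/2-k}$ for $|\xi|\ge1$, the correction to the $|\xi|^{-1/2}$ model being exponentially small together with all its derivatives. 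In particular $m\notin L^1(\R)$, so $K$ is a priori only a tempered distribution; the whole point is that the single power $|\xi|^{-1/2}$ at infinity produces the $|x|^{-1/2}$ blow-up at the origin, while everything else is regular.

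First I would split off this model part. Fix an even cutoff $\chi\in C^\infty(\R)$ with $\chi\equiv0$ near $0$ and $\chi\equiv1$ for $|\xi|\ge1$, and write $m=\chi(\xi)|\xi|^{-1/2}+m_{\mathrm{reg}}(\xi)$. By the asymptotics above, $m_{\mathrm{reg}}$ is smooth (the cutoff removes the singularity of $|\xi|^{-1/2}$ at $0$) and decays exponentially together with all derivatives; hence $m_{\mathrm{reg}}\in\Schwartz(\R)$, so $\F^{-1}(m_{\mathrm{reg}})$ and each of its derivatives are bounded and rapidly decaying. This piece is therefore harmless for all three assertions, and the analysis reduces to $K_{0}:=\F^{-1}(\chi(\xi)|\xi|^{-1/2})$.

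For $K_0$ I would invoke the classical Fourier transform of a homogeneous distribution, $\F^{-1}(|\xi|^{-1/2})=c_0\,|x|^{-1/2}$ with an explicit $c_0>0$, and write
\[
K_0(x)=c_0|x|^{-1/2}+g(x),\qquad g:=\F^{-1}\bigl((\chi-1)|\xi|^{-1/2}\bigr).
\]
The corrector symbol $(\chi-1)|\xi|^{-1/2}$ is supported in $|\xi|\le1$, lies in $L^1$, and is smooth except for the integrable singularity $|\xi|^{-1/2}$ at $\xi=0$; hence $g$ is a bounded $C^\infty$ function, and a van der Corput / integration-by-parts argument exploiting the endpoint singularity at $\xi=0$ gives $|g(x)|\lesssim|x|^{-1/2}$ and $|g'(x)|\lesssim|x|^{-3/2}$ as $|x|\to\infty$. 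Combining: on $0<|x|\le1$ (where $|x|^{-1/2}\ge1$) one gets $K(x)=c_0|x|^{-1/2}+\bigO(1)\le L_0|x|^{-1/2}$; differentiating the same decomposition, $K'(x)=-\tfrac{c_0}{2}\mathrm{sgn}(x)|x|^{-3/2}+g'(x)+(\F^{-1}m_{\mathrm{reg}})'(x)$ yields $|K'(x)|\le L_0|x|^{-3/2}$ on $0<|x|\le1$. On $[1,\infty)$ the term $|x|^{-3/2}$ is integrable, $g'$ decays like $|x|^{-3/2}$ hence is integrable, and the Schwartz contribution is rapidly decaying, so $\int_1^\infty|K'(x)|\,\diff x\le L_\infty$.

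I expect the main obstacle to be the rigorous treatment of the non-integrable symbol: because $m\sim|\xi|^{-1/2}$ (and the symbol $\mathrm{i}\xi\,m(\xi)$ of $K'$ in fact grows like $|\xi|^{1/2}$), neither $K$ nor $K'$ can be estimated by $\int|m|$, and one must argue at the level of oscillatory integrals and tempered distributions. The decomposition above is designed precisely to reduce everything to the one clean identity $\F^{-1}(|\xi|^{-1/2})=c_0|x|^{-1/2}$ together with the decay of $g$ and $g'$; making the latter sharp — the $|x|^{-3/2}$ decay of $g'$ needed for integrability on $[1,\infty)$ — is where the real work lies, and it can alternatively be obtained by shifting the contour into the analyticity strip $|\mathrm{Im}\,\xi|<\pi/2$ of $m$ to produce exponential decay of $K$ and $K'$ away from the origin.
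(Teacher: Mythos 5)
Your proof is correct, but note first that the paper itself contains no proof of this lemma: it is imported from \cite{Hur}, with the estimates attributed originally to \cite{EWah}, so your argument is necessarily an independent route rather than a reconstruction of the paper's. On the mathematics: your decomposition $m=\chi(\xi)|\xi|^{-1/2}+m_{\mathrm{reg}}$ with $m_{\mathrm{reg}}\in\Schwartz(\R)$ is valid (near $\xi=0$ the cutoff removes the model singularity while $m$ is smooth there; for $|\xi|\ge 1$ one has $m_{\mathrm{reg}}(\xi)=|\xi|^{-1/2}\bigl(\sqrt{\tanh|\xi|}-1\bigr)$, exponentially small together with all derivatives), the identity $\F^{-1}(|\xi|^{-1/2})=c_0|x|^{-1/2}$ is the standard self-dual homogeneous distribution, and the corrector $g$ has compactly supported $L^1$ symbol, hence is smooth and bounded with bounded derivatives; its decay at infinity holds because, modulo smooth symbols of order at most $-1/2$ (whose inverse transforms decay rapidly, by repeated integration by parts), the symbols of $g$ and $g'$ are again homogeneous of degree $-1/2$ and $+1/2$, so $|g(x)|\lesssim |x|^{-1/2}$ and $|g'(x)|\lesssim |x|^{-3/2}$. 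This delivers all three bounds, including the only delicate one, the integrability of $K'$ on $[1,\infty)$. By contrast, the cited sources obtain the lemma from a stronger structural fact: \cite{EWah} proves that $K$ is completely monotone on $(0,\infty)$ --- in particular positive, decreasing and convex --- with the $|x|^{-1/2}$ singularity at the origin, after which the three bounds are one-line consequences: convexity and monotonicity give $0\le -K'(x)\le 2x^{-1}K(x/2)\lesssim x^{-3/2}$, and $\int_1^\infty|K'(x)|\,\diff x=K(1)-\lim_{x\to\infty}K(x)\le K(1)$. What your approach buys is a short, self-contained piece of elementary Fourier analysis that uses only $m(\xi)=|\xi|^{-1/2}+\bigO\bigl(|\xi|^{-1/2}e^{-2|\xi|}\bigr)$ and therefore extends verbatim to any symbol with a single homogeneous tail; what the literature's approach buys is positivity, monotonicity and convexity of $K$, properties strictly stronger than the lemma that are exploited elsewhere in wave-breaking arguments.
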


We now can state the result on the shock formation of \eqref{eq:mWhitham}:
\begin{theorem}\label{th:mWhitham} \textup{(Rough version)} 
	There exists a wide class of functions \(\phi\in H^3(\R)\) with appropriate large positive amplitude \(\phi\) and negative slope \(\inf_{x\in\R}\phi^\prime(x)\) such that the Cauchy problem for the equation \eqref{eq:mWhitham} with initial data \(u(0,x)=\phi(x)\) exhibits shock formation. 
	
	\textup{(Precise version)} Let \(\delta\) be a sufficiently small positive number. Assume \(\bar{x}_1\) and \(\bar{x}_2\) are the largest and smallest numbers such that
	\(\overline{\{x: \phi^\prime(x)<0\}}\subset [\bar{x}_1,\bar{x}_2] \).
	Let \(\phi\in H^3(\R)\) satisfy the \textbf{slope condition}
	\begin{align*}
	&-\inf_{x\in\R}\phi^\prime(x)>\delta^{-1}f_1,\\
	&-\inf_{x\in\R}\phi^\prime(x)>(1-\delta)^{-2}f_2,\\ 
	&-\inf_{x\in\R}\phi^\prime(x)>(1-\delta)^{-3}f_3,
	\end{align*}
	and the \textbf{local amplitude condition}
	\begin{align*}
	&\phi(x)< B-(1-\delta)^{-2}f_4,\\
	&\phi(x)> A+(1-\delta)^{-2}f_4. 
	\end{align*}
	for all \(x\in [\bar{x}_1,\bar{x}_2]\).
	Here the functions \((f_1,f_2,f_3,f_4)\) are homogeneous in its  argument of order \((1/2,0,0,0)\), and have the following explicit formulae	
	\begin{equation*}
	\begin{aligned}
	&\quad f_1=:f_1(\|\phi\|_{H^2},\|\phi^{\prime\prime\prime}\|_{L^2},C_1)\\
	&=\big[C_s\|\phi\|_{H^2}+2(3L_0+L_\infty)
	+4L_0(2^{-1}AB^{-1})^{-7BA^{-2}/4}C_1^{-1}\|\phi^{\prime\prime\prime}\|_{L^2}\big]^{1/2},
	\end{aligned}
	\end{equation*}
	\begin{equation*}
	\begin{aligned}
	f_2=:f_2(C_0^{-1}C_1)=8(3L_0+L_\infty)+16L_0C_0^{-1}C_1,
	\end{aligned}
	\end{equation*}
	\begin{equation*}
	\begin{aligned}
	&\quad f_3=:f_3(
	C_1^{-1}\|\phi^{\prime\prime\prime}\|_{L^2})\\
	&=4(3L_0+L_\infty)
	+8L_0(2^{-1}AB^{-1})^{-7BA^{-2}/4}
	C_1^{-1}\|\phi^{\prime\prime\prime}\|_{L^2},
	\end{aligned}
	\end{equation*}
	\begin{equation*}
	\begin{aligned}
	&\quad f_4=:f_4\big(C_0\big(-\inf_{x\in\R}\phi^\prime(x)\big)^{-1},C_1\big(-\inf_{x\in\R}\phi^\prime(x)\big)^{-1}\big)\\
	&=\big(-\inf_{x\in\R}\phi^\prime(x)\big)^{-1}[4(3L_0+L_\infty)C_0+8L_0C_1],
	\end{aligned}
	\end{equation*}
	where \(C_0>0\) and \(C_1>0\) satisfying	
	\begin{equation*}
	\begin{aligned}
	\|\phi\|_{L^\infty}\leq \frac{C_0}{2},\quad 
	\|\phi^\prime\|_{L^\infty}\leq \frac{C_1}{2},
	\end{aligned}
	\end{equation*}	
	and \(A>0\) and \(B>0\) satisfying
	\begin{equation*}
	\begin{aligned}
	A^2>8\delta B^2,\quad	 B> A+2(1-\delta)^{-2}f_4,
	\end{aligned}
	\end{equation*}
	and \(C_s\) and \(C_m\) are the best  constants in the  Sobolev inequality and Morrey inequality
	\begin{equation*}
	\begin{aligned}
	\|f\|_{L^\infty(\R)}\leq C_s\|f\|_{H^1(\R)},
	\end{aligned}
	\end{equation*}
	and
	\begin{equation*}
	\begin{aligned}
	|f|_{\dot C^{0,\frac{1}{2}}(\R)}\leq C_m\|f_x\|_{L^2(\R)},
	\end{aligned}
	\end{equation*} 
	where \(|\cdot|_{\dot C^{0,\frac{1}{2}}(\R)}\) is the usual H\"older semi norm.  
	Then the solution of equation \eqref{eq:mWhitham} with initial data \(u(0,x)=\phi(x)\) exhibits shock formation at some time \(T>0\) with
	\begin{align*}
	(2B+\delta)^{-1}\big(-\inf_{x\in\R}\phi^\prime(x)\big)^{-1}
	<T<(AB^{-1}-\delta)^{-1}(2A-\delta)^{-1}\big(-\inf_{x\in\R}\phi^\prime(x)\big)^{-1},
	\end{align*}
	and at some location \(x_*\) satisfying
	\begin{align*}
	\bar{x}_1-C_0^2T
	\leq x_*\leq \bar{x}_2+C_0^2T.
	\end{align*}	
	Moreover, we have the blow-up rate estimate
	\begin{align*}
	(2B+\delta)^{-1}(T-t)^{-1}\leq\|\partial_xu(\cdot,t)\|_{L^\infty}\leq (AB^{-1}-\delta)^{-1}(2A-\delta)^{-1}(T-t)^{-1}, 
	\end{align*}
	as \(t\rightarrow T^{-}\).
\end{theorem}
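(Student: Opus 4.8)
The plan is to rerun the particle-path scheme used for Theorem~\ref{th:main}, replacing the homogeneous symbol by the Whitham kernel and calling on Lemma~\ref{le:Hur} wherever the power-law bounds entered. First I would keep the characteristics \eqref{eq:particle} and the quantities $v_0,v_1,m(t),q(t)$ from \eqref{0}--\eqref{1}. Writing $Wu=\int_\R K(x-y)\partial_y u(y,t)\,dy$ for the dispersive term and using that $W$ commutes with $\partial_x$, equation \eqref{eq:mWhitham} yields the same transport system \eqref{3.1}--\eqref{3.2} with
\[
K_0(t,x)=(Wu)(t,X(t,x)),\qquad K_1(t,x)=(W\partial_x u)(t,X(t,x)).
\]
Since $K$ is even, $K'$ is odd, and an integration by parts lets me rewrite each $K_j$ as a principal-value integral of $K'$ against an increment of $u$ (resp.\ $\partial_x u$), exactly parallel to \eqref{3.3}--\eqref{3.4}.

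As before the crux is the bootstrap bound \eqref{4}, $|K_1|<\delta^2 m^2$, argued by contradiction from \eqref{5}. The decisive observation is that Lemma~\ref{le:a1} and Lemma~\ref{le:a2}, including the integral estimates \eqref{a7}--\eqref{a8} for $q(t)$, use only the ODE \eqref{3.2}, the amplitude bound \eqref{bound on v_0}, and \eqref{c7}; they never invoke the explicit kernel. Hence they transfer to \eqref{eq:mWhitham} without modification, and I am left to supply the Whitham analogue of the nonlocal estimates in Lemma~\ref{le:fKdV-nonlocal}.

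I would prove these by splitting each singular integral into $|y|<\eta$, $\eta\le|y|\le1$, and $|y|>1$. On the first range I use $|K'(y)|\le L_0|y|^{-3/2}$ together with the Lipschitz bound $|u(X)-u(X-y)|\le|y|\,\|\partial_x u\|_{L^\infty}$ (and its second-order analogue for $K_1$); on the second range the $L^\infty$ controls \eqref{8}--\eqref{9} with the same kernel bound; on the third range the integrability $\int_1^\infty|K'|\le L_\infty$. Choosing $\eta=q(t)$ gives $|K_0|\lesssim q^{-1/2}$ with a constant of the form $(3L_0+L_\infty)C_0+L_0C_1$ matching the coefficients in $f_2$ and $f_4$. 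For $K_1$ the new difficulty is the $\|\partial_x^2 u\|_{L^\infty}$ contribution, which I bound through $\|\partial_x^3 u\|_{L^2}$; here I must verify the energy identity \eqref{the third derivative}, and this is the one genuinely Whitham-specific point: the operator $W$ is skew-adjoint on $L^2$ because its symbol $\mathrm{i}\xi\sqrt{\tanh\xi/\xi}$ is purely imaginary, so $W$ drops out of $\tfrac{d}{dt}\|\partial_x^3 u\|_{L^2}^2$ and \eqref{the third derivative}, hence \eqref{18}, survives with the same exponent $7BA^{-2}/4$. The constraint $8A^2>7B$ from \eqref{c7} then forces $\|\partial_x^2 u\|_{L^\infty}\lesssim q^{-7BA^{-2}/4}\le q^{-2}$, so that $|K_1|\lesssim q^{-2}$ with the constant recorded in $f_3$.

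With the two kernel bounds established, the rest is a transcription: inserting them into \eqref{3.1}--\eqref{3.2} and using \eqref{a7} reproduces the estimates \eqref{21}--\eqref{22}, closing the $L^\infty$ bootstrap \eqref{6}--\eqref{9}, while the local-amplitude estimates \eqref{24}--\eqref{25} secure \eqref{bound on v_0}; together these yield \eqref{4}. The shock time, location, and blow-up rate then follow from the identical endgame as in Theorem~\ref{th:main}. I expect the main obstacle to be exactly the $K_1$ estimate: keeping $\|\partial_x^2 u\|_{L^\infty}$ under the threshold $q^{-2}$ needs both the skew-adjointness of $W$ (to retain \eqref{the third derivative}) and the non-homogeneous three-way split dictated by Lemma~\ref{le:Hur}, and it is the interplay of these two ingredients that tames the cubic nonlinearity.
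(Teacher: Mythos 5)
Your proposal is correct and follows essentially the same route as the paper: the particle-path/bootstrap scheme with Lemmas \ref{le:a1}--\ref{le:a2} carried over verbatim (since they only use \eqref{3.2}, \eqref{5} and the a priori amplitude bound), the nonlocal estimates via Lemma \ref{le:Hur} with the choice \(\eta=q(t)^{-1+7BA^{-2}/4}\), and the third-derivative energy identity \eqref{the third derivative} justified by the evenness (equivalently, skew-adjointness) of the Whitham operator. The only cosmetic difference is that you integrate by parts globally to get a principal-value integral of \(K'\) against increments before splitting, whereas the paper splits at \(\eta\) first and integrates by parts only on \(|y|>\eta\), picking up the boundary terms \(K(\eta)[\partial_xu(\cdot-\eta)-\partial_xu(\cdot+\eta)]\); the resulting bounds are identical.
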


It is easy to show that the Cauchy problem for the equation \eqref{eq:mWhitham} with the initial data \(u(0,x)=\phi(x)\) is well-posed in the class \(C([0,T): H^3(\R))\) for some \(T>0\) which will now denote  the maximal time of existence in what follows. Using the same notations \(X(t,x),v_0(t,x),v_1(t,x),m(t)\) and \(q(t)\) as \eqref{eq:particle}-\eqref{1}, it then 
follows from \eqref{eq:mWhitham} that \eqref{3.1} and \eqref{3.2} hold, in which \(K_0(t,x)\) in \eqref{3.3} and \(K_1(t,x)\) in \eqref{3.4} shall be respectively replaced by
\begin{align}
&K_0(t,x)=\int_\R K(y)\partial_xu(t,X(t,x)-y)\, d y,\label{mWhitham1}\\
&K_1(t,x)=\int_\R K(y)\partial_x^2u(t,X(t,x)-y)\, d y \label{mWhitham2}.
\end{align}
Arguing as above for the equation \eqref{eq:mBH}, to complete the proof of 
Theorem \ref{th:mWhitham}, it suffices to show the following:

\begin{lemma} For all \(t\in[0,T_2]\) and \(x\in\R\),  we have 
	\begin{equation}\label{mWhitham-K_0}
	\begin{aligned}
	|K_0(t,x)|\leq 2\big[C_0(3L_0+L_\infty)+2L_0C_1\big]q(t)^{-\frac{1}{2}},
	\end{aligned}
	\end{equation}
	and
	\begin{equation}\label{mWhitham-K_1}
	\begin{aligned}
	|K_1(t,x)|
	\leq \big[2C_1(3L_0+L_\infty)+4L_0(2^{-1}AB^{-1})^{-7BA^{-2}/4}\|\phi^{\prime\prime\prime}\|_{L^2}\big]
	q(t)^{-2},
	\end{aligned}
	\end{equation}
	where \(K_0(t,x)\) and \(K_1(t,x)\) are defined by \eqref{mWhitham1} and \eqref{mWhitham2} respectively.	
\end{lemma}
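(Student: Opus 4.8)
The plan is to prove both bounds by the dyadic splitting already used for the fractional kernel in Lemma \ref{le:fKdV-nonlocal}, now adapted to the two regimes of the Whitham kernel recorded in Lemma \ref{le:Hur}: the near‑diagonal power laws $K(y)\le L_0|y|^{-1/2}$ and $|K'(y)|\le L_0|y|^{-3/2}$ for $0<|y|\le 1$, together with the tail bound $\int_1^\infty|K'|\le L_\infty$. Throughout I use the a priori bounds \eqref{8}--\eqref{9}, i.e.\ $\|u(t)\|_{L^\infty}\le C_0$ and $\|\partial_xu(t)\|_{L^\infty}\le C_1q^{-1}(t)$ on $[0,T_2]$, and the monotonicity $q(t)\le 1$ from Lemma \ref{le:a2}; the latter guarantees that every cutoff I introduce is $\le 1$, so that the singular estimates of Lemma \ref{le:Hur} are applied only where they hold.

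For \eqref{mWhitham-K_0} I would split $K_0=\int_{|y|<q}+\int_{|y|\ge q}$. On the inner region, estimating with $K(y)\le L_0|y|^{-1/2}$ and $\|\partial_xu\|_{L^\infty}\le C_1q^{-1}$ gives a contribution of size $4L_0C_1q^{-1}\sqrt q=4L_0C_1q^{-1/2}$. On the outer region I integrate by parts via $\partial_xu(t,X-y)=-\partial_y[u(t,X-y)]$, producing a boundary term at $|y|=q$ of size $2L_0C_0q^{-1/2}$ and the integral $\int_{|y|\ge q}K'(y)u(t,X-y)\,dy$; splitting the latter at $|y|=1$, with $|K'|\le L_0|y|^{-3/2}$ on $q\le|y|<1$ (giving $4L_0C_0q^{-1/2}$) and $\int_{|y|\ge 1}|K'|\le 2L_\infty$ on the tail (giving $2L_\infty C_0\le 2L_\infty C_0q^{-1/2}$), assembles exactly the constant $2[C_0(3L_0+L_\infty)+2L_0C_1]$.

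For \eqref{mWhitham-K_1} the scheme is identical with one further derivative, so I split $K_1=\int_{|y|<q^2}+\int_{|y|\ge q^2}$. On the outer region, integration by parts (moving the derivative off $\partial_x^2u$ onto the kernel) and the same $|y|=1$ split produce the terms $2C_1(3L_0+L_\infty)q^{-2}$; the choice $\eta=q^2$ is forced here, since it sends the boundary and middle contributions $\sim L_0C_1q^{-1}\eta^{-1/2}$ onto the target power $q^{-2}$. On the inner region I estimate $K(y)\le L_0|y|^{-1/2}$ against $\|\partial_x^2u\|_{L^\infty}$, obtaining a contribution $\sim L_0\|\partial_x^2u\|_{L^\infty}\,q$. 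The decisive input is the control of $\|\partial_x^2u\|_{L^\infty}$: the $L^2$ energy estimate for $\partial_x^3u$ leading to \eqref{18} depends only on the nonlinearity $u^2u_x$ (which is unchanged) and on the skew‑adjointness of the dispersive operator, and the Whitham dispersion $\int_\R K(x-y)\partial_yu\,dy$ is skew‑adjoint in $L^2$ because its Fourier symbol $\mathrm{i}\xi\sqrt{\tanh\xi/\xi}$ is purely imaginary; hence \eqref{18} holds verbatim for \eqref{eq:mWhitham}, and Sobolev embedding as in \eqref{16}--\eqref{19} gives $\|\partial_x^2u\|_{L^\infty}\lesssim(2^{-1}AB^{-1})^{-7BA^{-2}/4}\|\phi^{\prime\prime\prime}\|_{L^2}q^{-7BA^{-2}/4}$, so the inner term is $\lesssim L_0(2^{-1}AB^{-1})^{-7BA^{-2}/4}\|\phi^{\prime\prime\prime}\|_{L^2}\,q^{1-7BA^{-2}/4}$.

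The main obstacle is to certify that every resulting power of $q$ is dominated by the target. The $C_1$‑terms are immediate. For the $\|\phi^{\prime\prime\prime}\|_{L^2}$‑term I must check $q^{1-7BA^{-2}/4}\le q^{-2}$, i.e.\ $1-7BA^{-2}/4\ge -2$, which since $q\le 1$ reduces to $7B\le 12A^2$; this follows from the structural condition $8A^2>7B$ in \eqref{c7}, precisely as $(1-7BA^{-2}/4)(1+\alpha)\ge -1$ was used in the fractional case. The genuinely new bookkeeping, absent for the pure‑power fKdV kernel, is the fixed cut at $|y|=1$ that keeps the singular bounds of Lemma \ref{le:Hur} confined to $|y|\le 1$ while the slowly decaying tail is absorbed by $\int_1^\infty|K'|\le L_\infty$; reconciling this fixed split with the moving, $q$‑dependent cutoff (and using $q^2\le q\le 1$ to keep both inside $|y|\le 1$) is where care is needed, but no genuinely new analytic difficulty beyond that of \cite{SW2} arises.
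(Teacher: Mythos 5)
Your proposal is correct and follows the same overall scheme as the paper — Hur's kernel bounds (Lemma \ref{le:Hur}), integration by parts on the outer region with a fixed split at \(|y|=1\), and control of \(\|\partial_x^2u\|_{L^\infty}\) through the third-derivative energy estimate \eqref{the third derivative}--\eqref{18}, whose validity for \eqref{eq:mWhitham} rests, exactly as you say, on the evenness of \(K\) (equivalently, skew-adjointness of the dispersion). There are two genuine differences worth recording. First, for \(K_0\) the paper simply cites \cite{SW2}, whereas you reconstruct the proof and recover the exact constant \(2[C_0(3L_0+L_\infty)+2L_0C_1]\); your splitting at \(|y|=q\) with integration by parts on the outer region is precisely the argument being cited. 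Second, for \(K_1\) the paper balances the two contributions by taking \(\eta=q(t)^{-1+7BA^{-2}/4}\), while you take \(\eta=q^2\) so that the outer term lands directly on \(q^{-2}\) and the inner term \(\sim q^{1-7BA^{-2}/4}\) is then dominated using \(q\le1\). Your choice is in fact cleaner: it guarantees \(\eta\le1\) unconditionally, so Lemma \ref{le:Hur} applies on the whole inner region and the middle integral \(\int_{\eta<|y|\le1}\) is well defined, whereas the paper's \(\eta\) exceeds \(1\) whenever \(7B<4A^2\), a case the paper's proof silently ignores. Both routes end up needing the same exponent inequality \(7B\le 12A^2\); you derive it from \(8A^2>7B\) in \eqref{c7}, which — note — is a hypothesis of Theorem \ref{th:main} that Theorem \ref{th:mWhitham} does not actually restate (its conditions on \(A,B\) are only \(A^2>8\delta B^2\) and \(B>A+2(1-\delta)^{-2}f_4\)); since the paper's own minimization also requires this inequality, this is an omission in the theorem statement rather than a flaw in your argument, but you should flag that you are importing a condition not listed in the Whitham theorem. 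The residual factor \(C_s\) you carry in front of \(\|\phi^{\prime\prime\prime}\|_{L^2}\) also appears in the paper's intermediate bound \eqref{mWhitham6} and is dropped in its final statement, so that discrepancy is inherited from the paper, not introduced by you.
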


\begin{proof} The proof of \eqref{mWhitham-K_0} can be found in \cite{SW2}. 
	Because of the stronger nonlinear term in  equation \eqref{eq:mWhitham},  
	the proof here is simpler when  dealing with \(\|\partial_x^2u\|_{L^\infty}\) to estimate \(K_1(t,x)\) than that of  \cite{SW2}. We only focus on the proof of \eqref{mWhitham-K_1}. With a parameter \(\eta\) being specified later, one splits the integration into the following form:
	\begin{equation*}
	\begin{aligned}
	K_1(t,x)&=\int_{|y|\leq\eta}K(y)\partial_x^2u(t,X(t,x)-y)\, d y\\
	&\quad+\int_{|y|>\eta}K(y)\partial_x^2u(t,X(t,x)-y)\, d y\\
	&=\colon D_1+D_2.
	\end{aligned}
	\end{equation*}
	Applying Lemma \ref{le:Hur}, it is straightforward to see that 
	\begin{equation}\label{mWhitham3}
	\begin{aligned}
	|D_1|
	\leq 2\int_{|y|\leq\eta} \frac{L_0}{\sqrt{|y|}}\, d y \cdot\|\partial_x^2u\|_{L^\infty}\leq 4L_0\eta^{\frac{1}{2}}\|\partial_x^2u\|_{L^\infty},
	\end{aligned}
	\end{equation}
	We use integration by parts to find that
	\begin{equation}\label{mWhitham4}
	\begin{aligned}
	|D_2|
	&\leq\big|K(\eta)[\partial_xu(t,X(t,x)-\eta)-\partial_xu(t,X(t,x)+\eta)]\big|\\
	&\quad+\bigg|\int_{\eta<|y|\leq 1} K^\prime(y)\partial_xu(t,X(t,x)-y)\, d y\bigg|\\
	&\quad+\bigg|\int_{|y|>1} K^\prime(y)\partial_xu(t,X(t,x)-y)\, d y\bigg|\\
	&\leq 2L_0\eta^{-\frac{1}{2}}\|v_1\|_{L^\infty}+4L_0(\eta^{-\frac{1}{2}}-1)\|v_1\|_{L^\infty}
	+2L_\infty\|v_1\|_{L^\infty}\\
	&\leq 2(3L_0\eta^{-\frac{1}{2}}+L_\infty)\|v_1\|_{L^\infty}
	\leq 2C_1(3L_0+L_\infty)\eta^{-\frac{1}{2}}q^{-1}(t),
	\end{aligned}
	\end{equation}
	where Lemma \ref{le:Hur} was used.

	Again we use \(\|\partial_x^3u\|_{L^2}\) to control \(\|\partial_x^2u\|_{L^\infty}\). Using the property that \(K(\cdot)\) is even, manipulating as \eqref{the third derivative} and \eqref{18}, one still may estimate
	\begin{equation}\label{mWhitham5}
	\begin{aligned}
	\|\partial_x^3u\|_{L^2}
	\leq (2^{-1}AB^{-1})^{-7BA^{-2}/4}\|\phi^{\prime\prime\prime}\|_{L^2}
	q(t)^{-7BA^{-2}/4}.
	\end{aligned}
	\end{equation}
	Substituting \eqref{mWhitham5} into \eqref{mWhitham3} gives
	\begin{equation}\label{mWhitham6}
	\begin{aligned}
	|D_1|
	\leq 4L_0C_s(2^{-1}AB^{-1})^{-7BA^{-2}/4}\|\phi^{\prime\prime\prime}\|_{L^2}\eta^{\frac{1}{2}}q(t)^{-7BA^{-2}/4}.
	\end{aligned}
	\end{equation}
	Minimizing 	\eqref{mWhitham4} and \eqref{mWhitham6} by taking \(\eta=q(t)^{-1+7BA^{-2}/4}\), one obtains
	\begin{equation*}
	\begin{aligned}
	|K_1(t,x)|
	&\leq \big[2C_1(3L_0+L_\infty)+4L_0(2^{-1}AB^{-1})^{-7BA^{-2}/4}\|\phi^{\prime\prime\prime}\|_{L^2}\big]\\
	&\quad\times q(t)^{7BA^{-2}/8-1/2}\\
	&\leq \big[2C_1(3L_0+L_\infty)+4L_0(2^{-1}AB^{-1})^{-7BA^{-2}/4}\|\phi^{\prime\prime\prime}\|_{L^2}\big]
	q(t)^{-2}.
	\end{aligned}
	\end{equation*}
	This finishes the proof of \eqref{mWhitham-K_1}.

\end{proof}

We illustrate the shock formation in solutions of the modified 
Whitham equation for the example of Gaussian initial data. In this 
case the code is stopped for $t=0.8511$ since the Fourier 
coefficients are no longer exponentially decreasing. The solution at 
this time can be seen in Fig.~\ref{mwhithamgauss}. A fitting of the 
Fourier coefficients
according to (\ref{mufit}) yields a $\mu\sim0.38$. Thus the shock 
formation appears to be as for the modified fKdV equation with 
negative $\alpha$. 
\begin{figure}[htb!]
  \includegraphics[width=0.49\textwidth]{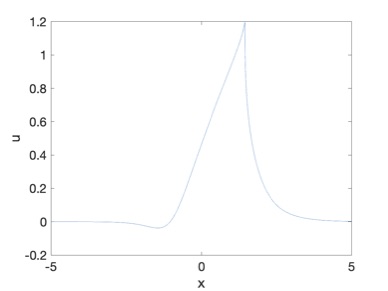}
 \caption{Solution to the modified Whitham equation for the initial 
 data $u(x,0)=\exp(-x^{2})$ for $t=0.8511$. }
 \label{mwhithamgauss}
\end{figure}

\section{Variants II: shock formation for the generalized fractional Korteweg-de Vries equation}

The generalized fractional Korteweg-de Vries equation reads :
\begin{equation}\label{eq:gKdV}
u_t+u^pu_x-|D|^\alpha\partial_xu=0,\; p=3,4,...
\end{equation}
The result on the shock formation of \eqref{eq:gKdV} can be stated as follows:
\begin{theorem}\label{th:gKdV} \textup{(Rough version)} Let  \(p>1\) and \(\alpha\in(-1,0)\). 
	There exists a wide class of functions \(\phi\in H^3(\R)\) with appropriate large positive amplitude \(\phi\) and negative slope \(\inf_{x\in\R}\phi^\prime(x)\) such that the Cauchy problem for  equation \eqref{eq:gKdV} with initial data \(u(0,x)=\phi(x)\) exhibits shock formation. 
	
	\textup{(Precise version)} Let \(p>1\) and \(\alpha\in(-1,0)\), and \(\delta\) be a sufficiently small positive number. Assume \(\bar{x}_1\) and \(\bar{x}_2\) are the largest and smallest numbers such that
	\(\overline{\{x: \phi^\prime(x)<0\}}\subset [\bar{x}_1,\bar{x}_2] \).
	Let \(\phi\in H^3(\R)\) satisfy the \textbf{slope condition}
	\begin{align*}
	&-\inf_{x\in\R}\phi^\prime(x)>\delta^{-1}f_1,\\
	&-\inf_{x\in\R}\phi^\prime(x)>(1-\delta)^{-2}f_2,\\
	&-\inf_{x\in\R}\phi^\prime(x)>(1-\delta)^{-3}f_3,
	\end{align*}
	and the \textbf{local amplitude condition}
	\begin{align*}
	&\phi(x)< B-(1-\delta)^{-2}f_4,\\
	&\phi(x)> A+(1-\delta)^{-2}f_4, 
	\end{align*}
	for all \(x\in [\bar{x}_1,\bar{x}_2]\).
	Here the functions \((f_1,f_2,f_3,f_4)\) are homogeneous in each argument of order \((1/2,0,0,0)\), and have the following explicit formulae	
	\begin{equation*}
	\begin{aligned}
	f_1&=:f_1(\|\phi\|_{H^2},C_1,\|\phi^{\prime\prime\prime}\|_{L^2})
	=\big[C_s\|\phi\|_{H^2}+4C_1(1+\alpha)^{-1}\\
	&\quad+2C_s(-\alpha)^{-1}(2^{-1}A^{p-1}B^{1-p})^{-7B^{p-1}/(2pA^{2p-2})}
	\|\phi^{\prime\prime\prime}\|_{L^2}\big]^{1/2},
	\end{aligned}
	\end{equation*}
	\begin{equation*}
	\begin{aligned}
	f_2=:f_2(C_0^{-1}C_1)=8(-\alpha(1+\alpha))^{-1}+4\alpha^{-2}C_0^{-1}C_1,
	\end{aligned}
	\end{equation*}
	\begin{equation*}
	\begin{aligned}
	&\quad f_3=:f_3(C_1^{-1}\|\phi^{\prime\prime\prime}\|_{L^2})\\
	&=8(1+\alpha)^{-1}+4C_s(-\alpha)^{-1}(2^{-1}A^{p-1}B^{1-p})^{-7B^{p-1}/(2pA^{2p-2})}
	C_1^{-1}\|\phi^{\prime\prime\prime}\|_{L^2},
	\end{aligned}
	\end{equation*}
	\begin{equation*}
	\begin{aligned}
	&\quad f_4=:f_4\big(C_0\big(-\inf_{x\in\R}\phi^\prime(x)\big)^{-1},C_1\big(-\inf_{x\in\R}\phi^\prime(x)\big)^{-1}\big)\\
	&=\big(-\inf_{x\in\R}\phi^\prime(x)\big)^{-1}\big[4C_0\big(-\alpha(1+\alpha)\big)^{-1}+2C_1\alpha^{-2}\big],
	\end{aligned}
	\end{equation*}
	where \(C_0>0\) and \(C_1>0\) satisfying	
	\begin{equation*}
	\begin{aligned}
	\|\phi\|_{L^\infty}\leq \frac{C_0}{2},\quad 
	\|\phi^\prime\|_{L^\infty}\leq \frac{C_1}{2},
	\end{aligned}
	\end{equation*}	
	and \(A>0\) and \(B>0\) satisfying
	\begin{equation}\label{con:A,B}
	\begin{aligned}
	A^{2p-2}>8\delta B^{2p-2},\quad 4pA^{2p-2}>7B^{p-1},\quad B> A+2(1-\delta)^{-2}f_4,
	\end{aligned}
	\end{equation}
	and \(C_s\) is the best  constant of the Sobolev inequality
	\begin{equation*}
	\begin{aligned}
	\|f\|_{L^\infty(\R)}\leq C_s\|f\|_{H^1(\R)}.
	\end{aligned}
	\end{equation*}   
	Then the solution for the equation \eqref{eq:gKdV} with initial data \(u(0,x)=\phi(x)\) exhibits shock formation at some time \(T>0\) satisfying
	\begin{equation*}
	\begin{aligned}
	&(pB^{p-1}+\delta)^{-1}\big(-\inf_{x\in\R}\phi^\prime(x)\big)^{-1}
	<T\\
	&<(A^{p-1}B^{1-p}-\delta)^{-1}(pA^{p-1}-\delta)^{-1}\big(-\inf_{x\in\R}\phi^\prime(x)\big)^{-1},
	\end{aligned}
	\end{equation*}
	and at some location \(x_*\) satisfying
	\begin{align}
	\bar{x}_1-C_0^pT
	\leq x_*\leq \bar{x}_2+C_0^pT.
	\end{align}
	Moreover, we have the blow-up rate estimate
	\begin{align*}
	&(pB^{p-1}+\delta)^{-1}(T-t)^{-1}\leq\|\partial_xu(\cdot,t)\|_{L^\infty}\\
	&\leq (A^{p-1}B^{1-p}-\delta)^{-1}(pA^{p-1}-\delta)^{-1}(T-t)^{-1}, 
	\end{align*}
	as \(t\rightarrow T^{-}\).
	
\end{theorem}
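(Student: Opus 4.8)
The plan is to follow the proof of Theorem \ref{th:main} essentially line by line, tracking the powers of $p$ that the nonlinearity $u^pu_x$ introduces in place of $u^2u_x$. First I would set up the particle path by $\frac{d}{dt}X(t,x)=u^p(t,X(t,x))$, $X(0,x)=x$, and define $v_0,v_1,m,q$ exactly as in \eqref{0} and \eqref{1}. Differentiating \eqref{eq:gKdV} once in $x$ and evaluating along $X$, the system \eqref{3.1}, \eqref{3.2} becomes
\begin{align*}
&\frac{dv_0}{dt}+K_0(t,x)=0,\\
&\frac{dv_1}{dt}+pv_0^{p-1}v_1^2+K_1(t,x)=0,
\end{align*}
with $K_0,K_1$ given by the \emph{same} nonlocal kernels \eqref{3.3}, \eqref{3.4}, since the dispersive operator $|D|^\alpha\partial_x$ is unchanged. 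The only structural difference is that the leading nonlinear coefficient is now $pv_0^{p-1}$ rather than $2v_0$; this is precisely why the local amplitude condition forcing $v_0\in[A,B]$ with $A>0$ must be retained, so that $pv_0^{p-1}\in[pA^{p-1},pB^{p-1}]$ is positive and bounded. As before, the heart of the argument is the claim $|K_1(t,x)|<\delta^2m^2(t)$, the analog of \eqref{4}, proved by a continuity/contradiction argument starting from the standing hypothesis \eqref{5}.

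Next I would reprove the two bounds on $q(t)$. Lemma \ref{le:a1} (the nesting $\Sigma_\delta(t_2)\subset\Sigma_\delta(t_1)$) and Lemma \ref{le:a2} carry over once one replaces the comparison threshold $AB^{-1}$ by $A^{p-1}B^{1-p}=(A/B)^{p-1}$ — which is exactly the ratio of the minimum $pA^{p-1}$ to the maximum $pB^{p-1}$ of the coefficient $pv_0^{p-1}$ over $v_0\in[A,B]$ — and replaces the rate constants $2A,2B$ by $pA^{p-1},pB^{p-1}$. With these substitutions the differential inequalities along the two extremal characteristics read $\frac{dv_1}{dt}\ge-(pB^{p-1}+\frac{\delta}{2})v_1^2$ and $\frac{dv_1}{dt}\le-(pA^{p-1}-\frac{\delta}{2})v_1^2$, and integrating them reproduces \eqref{a6}, \eqref{a7}, \eqref{a8} with the stated replacements; in particular \eqref{a8} becomes $\int_0^tq^{-1}\le m^{-1}(0)(pA^{p-1}-\delta)^{-1}(A^{p-1}B^{1-p}-\delta)^{-1}[\log(A^{p-1}B^{1-p}-\delta)+\log q(t)]$, which feeds the Gronwall step below.

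The main new computation is the energy estimate controlling $K_1$, i.e.\ the $p$-analog of Lemma \ref{le:fKdV-nonlocal} and of \eqref{the third derivative}. The bound on $K_0$ is unchanged and equals \eqref{10}. For $K_1$ I would split the singular integral at scale $\eta_2$, estimate the inner piece by $\|\partial_x^2u\|_{L^\infty}\lesssim\|\partial_x^3u\|_{L^2}$ (Sobolev, constant $C_s$) and the outer piece by $\|\partial_xu\|_{L^\infty}\lesssim C_1q^{-1}$, exactly as in \eqref{16}--\eqref{19}. The single place where $p$ enters is the $L^2$ identity for $\partial_x^3u$: integrating $\partial_x^3$ of \eqref{eq:gKdV} against $\partial_x^3u$, the skew-adjoint dispersion drops out and repeated integration by parts yields a leading term proportional to $\int_\R v_0^{p-1}\,\partial_xu\,(\partial_x^3u)^2\,dx$; combined with the $q$-bound and the generalized \eqref{a8}, Gronwall gives $\|\partial_x^3u\|_{L^2}\lesssim (2^{-1}A^{p-1}B^{1-p})^{-7B^{p-1}/(2pA^{2p-2})}\|\phi^{\prime\prime\prime}\|_{L^2}\,q(t)^{-7B^{p-1}/(2pA^{2p-2})}$, the exact generalization of \eqref{18}. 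Optimizing the split by taking $\eta_2$ a suitable power of $q$ then gives $|K_1|\lesssim q^{-2}$. The crucial point — and what I expect to be the main obstacle — is to verify that the exponent $7B^{p-1}/(2pA^{2p-2})$ stays below $2$, which is guaranteed precisely by the constraint $4pA^{2p-2}>7B^{p-1}$ in \eqref{con:A,B} together with $\alpha\in(-1,0)$.

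With these ingredients the bootstrap closes as in the proof of Theorem \ref{th:main}: the a priori bounds $\|v_0\|_{L^\infty}<C_0$ and $\|v_1\|_{L^\infty}<C_1q^{-1}$, the analogs of \eqref{6}, \eqref{7}, follow by integrating the characteristic system and invoking the slope conditions, which contradicts the standing assumption and establishes \eqref{4}, while $v_0\in[A,B]$ on $[\bar{x}_1,\bar{x}_2]$ follows from the local amplitude condition as in \eqref{24}, \eqref{25} and secures \eqref{bound on v_0}. Finally, feeding $m(0)\le v_1(0,x)\le(A^{p-1}B^{1-p}-\delta)m(0)$ into the solved ODE shows $q(t)\to0$ inside the stated window, giving the shock time $T$ between $(pB^{p-1}+\delta)^{-1}(-\inf_{x\in\R}\phi^\prime(x))^{-1}$ and $(A^{p-1}B^{1-p}-\delta)^{-1}(pA^{p-1}-\delta)^{-1}(-\inf_{x\in\R}\phi^\prime(x))^{-1}$, the blow-up rate, and — since $\|u\|_{L^\infty}\le C_0$ bounds the characteristic speed by $C_0^p$ — the location estimate $\bar{x}_1-C_0^pT\le x_*\le\bar{x}_2+C_0^pT$. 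Apart from the third-derivative energy estimate with its $p$-dependent exponent and the check that \eqref{con:A,B} is compatible with $B>A$ (a genuine restriction as $p$ grows), the argument is a routine transcription of the $p=2$ case.
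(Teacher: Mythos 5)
Your proposal is correct and follows essentially the same route as the paper: the paper's own proof of Theorem \ref{th:gKdV} consists precisely of rerunning the argument for Theorem \ref{th:main} with the substitutions you list (characteristic speed $u^2\mapsto u^p$, coefficient $2v_0\mapsto pv_0^{p-1}$, threshold $AB^{-1}\mapsto A^{p-1}B^{1-p}$, rates $2A,2B\mapsto pA^{p-1},pB^{p-1}$ in its Lemmas \ref{le:v1}--\ref{le:v2}, location bound $C_0^2\mapsto C_0^p$), with the Gronwall step for $\|\partial_x^3u\|_{L^2}$ yielding the exponent $7B^{p-1}/(2pA^{2p-2})$ that is kept below $2$ by the condition $4pA^{2p-2}>7B^{p-1}$ of \eqref{con:A,B}, exactly the point you single out. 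The only slight divergence is in Lemma \ref{le:gfKdV-nonlocal}, where the paper simply reuses the identity \eqref{the third derivative} verbatim (coefficient $\tfrac{7}{2}$, factor $\partial_x u$, no power of $u$), whereas you redo the integration by parts and record the leading factor $v_0^{p-1}\partial_x u$; your bookkeeping is the more faithful one there, though it would then require bounding that factor before Gronwall, a step both you and the paper leave implicit.
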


It is standard to show that the Cauchy problem for the equation \eqref{eq:gKdV} with the initial data \(u(0,x)=\phi(x)\) is well-posed in the class \(C([0,T): H^3(\R))\) for some \(T>0\) which will now  denote the maximal time of existence. Using the same notations \(X(t,x),v_0(t,x),v_1(t,x),m(t)\) and \(q(t)\) as \eqref{eq:particle}-\eqref{1}, it then 
follows from \eqref{eq:gKdV} that \eqref{3.1} and \eqref{3.2} hold, in which \(K_0(t,x)\) and \(K_1(t,x)\) are the same as in \eqref{3.3} and \eqref{3.4}.

Analogously, to prove Theorem \ref{th:gKdV}, we need to show \eqref{4}
via a contradiction argument by assuming \eqref{5} conversely. We need to carrry two main tasks to close the proof of \eqref{5}. 

The first task is to estimate the bound of \(q(t)\).  
Let \(t\in[0,T_1]\) and \(x\in [\bar{x}_1,\bar{x}_2]\), and a priori assume 
\begin{equation*}
\begin{aligned}
A\leq v_0(t,x)\leq B,  
\end{aligned}
\end{equation*}
in which \(A\) and \(B\) are given in Theorem \ref{th:gKdV} satisfying \eqref{con:A,B}. 
We now  define 
\begin{equation*}
\begin{aligned}
\Sigma_{\delta}(t)=\{x\in[\bar{x}_1,\bar{x}_2]:\ v_1(t,x)\leq (A^{p-1}B^{1-p}-\delta)m(t)\},
\end{aligned}
\end{equation*}
and 
\begin{equation*}
\begin{aligned}
v_1(t,x)=:m(0)r^{-1}(t,x).
\end{aligned}
\end{equation*}

With slight modifications, one can follow the proof of Lemma \ref{le:a1}-Lemma \ref{le:a2} to show the following: 
\begin{lemma}\label{le:v1} We have \(\Sigma_{\delta}(t_2)\subset \Sigma_{\delta}(t_1) \) whenever \(0\leq t_1\leq t_2\leq T_1\).
	
\end{lemma}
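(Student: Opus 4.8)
The plan is to copy the contradiction scheme of Lemma~\ref{le:a1} almost verbatim, the only structural change being that the particle-path identity \eqref{3.2} now reads $\frac{d v_1}{d t}+pv_0^{p-1}v_1^2+K_1=0$, so the constants $2B$ and $2A$ are replaced by $pB^{p-1}$ and $pA^{p-1}$, and the threshold ratio $AB^{-1}$ is replaced by $A^{p-1}B^{1-p}$. Throughout I keep the a priori bound $A\le v_0(t,x)\le B$ on $[\bar{x}_1,\bar{x}_2]$ and the standing hypothesis \eqref{5}.

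First I would assume, for contradiction, that there is $x_1\in[\bar{x}_1,\bar{x}_2]$ and $0\le t_1\le t_2\le T_1$ with $x_1\notin\Sigma_\delta(t_1)$ but $x_1\in\Sigma_\delta(t_2)$, that is
\begin{equation*}
v_1(t_1,x_1)>(A^{p-1}B^{1-p}-\delta)m(t_1),\qquad v_1(t_2,x_1)\le(A^{p-1}B^{1-p}-\delta)m(t_2)<\tfrac{1}{2}A^{p-1}B^{1-p}m(t_2).
\end{equation*}
By uniform continuity of $v_1(\cdot,x_1)$ and $m$ on $[0,T_1]$ I shrink $[t_1,t_2]$ so that $v_1(t,x_1)\le\tfrac{1}{2}A^{p-1}B^{1-p}m(t)$ on the whole subinterval, and I pick $x_2$ with $v_1(t_1,x_2)=m(t_1)$ and shrink once more so that $v_1(t,x_2)\le\tfrac{1}{2}A^{p-1}B^{1-p}m(t)$ on $[t_1,t_2]$ as well.

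The quantitative heart of the argument is the control of the nonlocal term. Combining \eqref{5} with the scaling hypothesis $A^{2p-2}>8\delta B^{2p-2}$ from \eqref{con:A,B} and the lower bounds $|v_1(t,x_j)|\ge\tfrac{1}{2}A^{p-1}B^{1-p}|m(t)|$, I would obtain
\begin{equation*}
|K_1(t,x_j)|\le\delta^2 m^2(t)\le 4\delta^2 A^{2-2p}B^{2p-2}v_1^2(t,x_j)<\tfrac{\delta}{2}v_1^2(t,x_j),\qquad j=1,2.
\end{equation*}
Inserting this into \eqref{3.2} and using $A\le v_0\le B$ produces the one-sided Riccati inequalities $\frac{d v_1}{d t}(t,x_1)\ge-(pB^{p-1}+\tfrac{\delta}{2})v_1^2(t,x_1)$ and $\frac{d v_1}{d t}(t,x_2)\le-(pA^{p-1}-\tfrac{\delta}{2})v_1^2(t,x_2)$, which integrate to the analogues of \eqref{a3}--\eqref{a5}; in particular the $x_2$-inequality with $v_1(t_1,x_2)=m(t_1)$ yields a bound on $m(t_2)$.

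Finally I would chain the two comparisons exactly as at the end of the proof of Lemma~\ref{le:a1}: starting from the $x_1$-comparison formula and replacing the denominator constant using the $m(t_2)$-bound, one reaches $v_1(t_2,x_1)>(A^{p-1}B^{1-p}-\delta)m(t_2)$, contradicting the second displayed inequality. I expect the sole genuine obstacle to be the algebraic sign inequality $(pB^{p-1}+\tfrac{\delta}{2})(A^{p-1}B^{1-p}-\delta)<pA^{p-1}-\tfrac{\delta}{2}$ that drives this last chain: its leading term $pB^{p-1}\cdot A^{p-1}B^{1-p}=pA^{p-1}$ cancels automatically, so the inequality reduces to an $O(\delta)$ estimate that holds for $\delta$ small, but it must be re-derived with the exponents $p-1$ and $2p-2$ rather than quoted from the cubic case $p=2$. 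With that inequality in hand the contradiction closes and $\Sigma_\delta(t_2)\subset\Sigma_\delta(t_1)$ follows.
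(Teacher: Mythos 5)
Your proposal is correct and is essentially the paper's own route: the paper gives no separate proof of this lemma, saying only that one follows Lemma \ref{le:a1} with slight modifications, which is precisely what you carry out with the substitutions $2A\mapsto pA^{p-1}$, $2B\mapsto pB^{p-1}$, $AB^{-1}\mapsto A^{p-1}B^{1-p}$ and $A^{2}>8\delta B^{2}\mapsto A^{2p-2}>8\delta B^{2p-2}$. One clarification on the step you flag as the sole obstacle: the inequality $(pB^{p-1}+\tfrac{\delta}{2})(A^{p-1}B^{1-p}-\delta)<pA^{p-1}-\tfrac{\delta}{2}$ does not follow from $\delta$-smallness alone --- after the leading-order cancellation it reduces to $\tfrac{1}{2}(A/B)^{p-1}+\tfrac{1}{2}-\tfrac{\delta}{2}<pB^{p-1}$, whose left side is below $1$ but whose validity needs a lower bound on $pB^{p-1}$, supplied by the second condition $4pA^{2p-2}>7B^{p-1}$ in \eqref{con:A,B} together with $B>A$, which force $pB^{p-1}>7/4$.
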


\begin{lemma}\label{le:v2} \(q(t)\) is decreasing and satisfies
	\begin{equation*}
	\begin{aligned}
	0<q(t)\leq 1,\quad \text{for}\ t\in [0,T_1].
	\end{aligned}
	\end{equation*}
	We also have the integral estimates 
	\begin{equation*}
	\begin{aligned}
	\int_0^tq^{-s}(\tau)\, d \tau
	&\leq (1-s)^{-1}m^{-1}(0)(pA^{p-1}-\delta)^{-1}
	(A^{p-1}B^{1-p}-\delta)^{-s}\\
	&\quad\times\big[q^{1-s}(t)-(A^{p-1}B^{1-p}-\delta)^{s-1}\big],\quad \text{for}\ t\in [0,T_1].
	\end{aligned}
	\end{equation*}	
	where \(s>0, s\neq 1\), and	
	\begin{equation*}
	\begin{aligned}
	\int_0^tq^{-1}(\tau)\, d \tau
	&\leq m^{-1}(0)(pA^{p-1}-\delta)^{-1}(A^{p-1}B^{1-p}-\delta)^{-1}\\
	&\quad\times\big[\log (A^{p-1}B^{1-p}-\delta)+\log q(t)\big],\quad \text{for}\ t\in [0,T_1].
	\end{aligned}
	\end{equation*}

\end{lemma}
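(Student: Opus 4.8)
The plan is to reproduce the proofs of Lemma~\ref{le:a1} and Lemma~\ref{le:a2} line by line, since the generalized nonlinearity $u^p\partial_xu$ changes \eqref{3.2} only by replacing the leading coefficient $2v_0$ with $pv_0^{p-1}$. Working under the a priori bound $A\le v_0(t,x)\le B$ on $[\bar{x}_1,\bar{x}_2]$ (the analog of \eqref{bound on v_0}, closed separately) together with $A>0$ and $p>1$, the monotonicity of $s\mapsto s^{p-1}$ traps the coefficient in a fixed band, $pA^{p-1}\le pv_0^{p-1}\le pB^{p-1}$. This single observation is the only place where $p\ne2$ enters; throughout the argument it amounts to replacing the triple $(2A,\,2B,\,AB^{-1})$ by $(pA^{p-1},\,pB^{p-1},\,A^{p-1}B^{1-p})$, and one notes $A^{p-1}B^{1-p}=(A/B)^{p-1}\in(0,1)$ because $A<B$ by \eqref{con:A,B}. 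The algebraic identity $pB^{p-1}\cdot A^{p-1}B^{1-p}=pA^{p-1}$, the exact analog of $2B\cdot AB^{-1}=2A$, is what makes the comparison arguments close.

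Since Lemma~\ref{le:v1} is already available, I would fix $x\in\Sigma_\delta(T_1)$ and invoke it to get the two-sided bound $m(t)\le v_1(t,x)\le(A^{p-1}B^{1-p}-\delta)m(t)$ on $[0,T_1]$, the analog of \eqref{a9}. Writing the solution of \eqref{3.2} in the Riccati form \eqref{a10} with $2v_0$ replaced by $pv_0^{p-1}$, the lower bound $|v_1|\ge(A^{p-1}B^{1-p}-\delta)|m|$ on $\Sigma_\delta$ together with the contradiction hypothesis \eqref{5} yields $|v_1^{-2}K_1|\le(A^{p-1}B^{1-p}-\delta)^{-2}\delta^2<\delta$ for $\delta$ small, since $(A/B)^{p-1}$ is a fixed positive number. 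Differentiating \eqref{a10} gives $\frac{d}{dt}r(t,x)=m(0)\big[pv_0^{p-1}+v_1^{-2}K_1\big]$, so the band and this error bound produce $(pB^{p-1}+\delta)m(0)\le\frac{d}{dt}r(t,x)\le(pA^{p-1}-\delta)m(0)$, the analog of \eqref{a11}. As $m(0)<0$, $r$, hence $v_1$, hence $q$ (via \eqref{1}) is decreasing; with $q(0)=1$ and \eqref{3} this gives $0<q(t)\le1$.

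For the integral estimates I would first record the comparison $q(t)\le r(t,x)\le(A^{p-1}B^{1-p}-\delta)^{-1}q(t)$, the analog of \eqref{a12}, obtained from \eqref{1} and the two-sided bound on $v_1$. Using the upper half of this comparison, $q^{-s}\le(A^{p-1}B^{1-p}-\delta)^{-s}r^{-s}$, reduces the problem to estimating $\int_0^t r^{-s}(\tau,x)\,d\tau$. Because $\frac{d}{d\tau}r\le(pA^{p-1}-\delta)m(0)<0$, the factor $m^{-1}(0)(pA^{p-1}-\delta)^{-1}\frac{d}{d\tau}r$ is $\ge1$, so inserting it under the integral and integrating $r^{-s}\frac{d}{d\tau}r$ explicitly produces $\frac{1}{1-s}\big[r^{1-s}(t,x)-r^{1-s}(0,x)\big]$; replacing $r$ by $q$ via the comparison (and using $r(0,x)\in[1,(A^{p-1}B^{1-p}-\delta)^{-1}]$) gives the stated bound for $s\ne1$, while the case $s=1$ runs identically with $\log r$ in place of $\frac{r^{1-s}}{1-s}$.

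There is no conceptual obstacle beyond the cubic case; the one point that needs genuine care is the sign bookkeeping in the final step, where replacing $r$ by $q$ in $r^{1-s}(t,x)-r^{1-s}(0,x)$ must respect the sign of $1-s$ (and switch to the logarithm at $s=1$) so as to land on the right-hand sides with the correct constants $(pA^{p-1}-\delta)^{-1}$ and $(A^{p-1}B^{1-p}-\delta)^{-s}$. The homogeneity orders $(1/2,0,0,0)$ of $f_1,\dots,f_4$ in Theorem~\ref{th:gKdV} are arranged precisely so that these constants propagate into the slope and amplitude conditions exactly as before, which is why only \emph{slight modifications} of Lemma~\ref{le:a1}--Lemma~\ref{le:a2} are required.
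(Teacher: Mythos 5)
Your proposal is correct and follows essentially the same route as the paper, which itself proves Lemma \ref{le:v2} by repeating the proofs of Lemmas \ref{le:a1}--\ref{le:a2} with the substitution $2v_0\mapsto pv_0^{p-1}$, i.e.\ $(2A,2B,AB^{-1})\mapsto(pA^{p-1},pB^{p-1},A^{p-1}B^{1-p})$, under the a priori band $A\le v_0\le B$ and the hypothesis \eqref{5}. You correctly isolate the only delicate points: the direction of the comparison $q^{-s}\le(A^{p-1}B^{1-p}-\delta)^{-s}r^{-s}$ coming from the upper half of the analog of \eqref{a12}, and the sign bookkeeping in $1-s$ (with the logarithm at $s=1$) when passing from $r$ back to $q$.
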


The other task is to estimate the nonlocal terms.	
\begin{lemma}\label{le:gfKdV-nonlocal} For all \(t\in[0,T_2]\) and \(x\in\R\),  we have 
	\begin{equation*}
	\begin{aligned}
	|K_0(t,x)|\leq \big[4C_0(1+\alpha)^{-1}+2C_1(-\alpha)^{-1}\big]q(t)^{-(1+\alpha)},
	\end{aligned}
	\end{equation*}
	and
	\begin{equation*}
	\begin{aligned}
	|K_1(t,x)|
	&\leq \big[4C_1(1+\alpha)^{-1}+2C_s(-\alpha)^{-1}\\
	&\quad\times(2^{-1}A^{p-1}B^{1-p})^{-7B^{p-1}/(2pA^{2p-2})}\|\phi^{\prime\prime\prime}\|_{L^2}\big]q^{-2}(t),
	\end{aligned}
	\end{equation*}

\end{lemma}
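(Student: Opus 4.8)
The plan is to follow the proof of Lemma \ref{le:fKdV-nonlocal} almost verbatim, since the nonlocal quantities $K_0$ and $K_1$ are defined by exactly the same kernels \eqref{3.3}--\eqref{3.4} as in the cubic case $p=2$, and the a priori bounds $\|u\|_{L^\infty}\le C_0$ and $\|\partial_xu\|_{L^\infty}\le C_1q^{-1}(t)$ of \eqref{8}--\eqref{9} keep the same form for \eqref{eq:gKdV}. For $K_0$ I would split the integral at $|y|=\eta_1$, estimate the inner region by the Lipschitz bound $\|\partial_xu\|_{L^\infty}\le C_1q^{-1}$ and the outer region by $\|u\|_{L^\infty}\le C_0$, and then optimize at $\eta_1=q(t)$. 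This reproduces the first estimate word for word; the power $p$ plays no role here, since $K_0$ only sees $u$ and $\partial_xu$.

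For $K_1$ I would again split at $|y|=\eta_2$, bounding the outer region by $4C_1(1+\alpha)^{-1}\eta_2^{-(1+\alpha)}q^{-1}(t)$ and the inner region by $2(-\alpha)^{-1}\eta_2^{-\alpha}\|\partial_x^2u\|_{L^\infty}$. As in the cubic case, the one genuinely new ingredient is the control of $\|\partial_x^2u\|_{L^\infty}$, which I would obtain from $\|\partial_x^3u\|_{L^2}$ through the Sobolev embedding $\|\partial_x^2u\|_{L^\infty}\le C_s\|\partial_x^3u\|_{L^2}$ (the milder lower-order part being harmless).

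The crux, and the step I expect to be the main obstacle, is the energy estimate for $\|\partial_x^3u\|_{L^2}$ adapted to the nonlinearity $u^pu_x$. Differentiating \eqref{eq:gKdV} three times, pairing with $\partial_x^3u$ in $L^2$, and using that $|D|^\alpha\partial_x$ is skew-adjoint so that the dispersive term drops, the surviving top-order contribution is a fixed multiple of $\int_\R u^{p-1}\,\partial_xu\,(\partial_x^3u)^2\,dx$, the remaining commutator terms being of lower order. Bounding this using $\inf_x\partial_xu=m(t)=m(0)q^{-1}(t)$ together with the a priori amplitude bound on $v_0$ should yield, in analogy with \eqref{the third derivative}, the differential inequality $\tfrac{d}{dt}\|\partial_x^3u\|_{L^2}^2\le -7\,m(0)q^{-1}(t)\|\partial_x^3u\|_{L^2}^2$. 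The delicate point, absent when $p=2$, is that the factor $u^{p-1}$ must be routed through the amplitude conditions so that the constant matches Lemma \ref{le:v2} exactly; moreover the quasilinear top-order term $u^p\partial_x^4u$ has to be integrated by parts so as to feed back into the same $\int u^{p-1}\partial_xu\,(\partial_x^3u)^2$ structure.

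Integrating this inequality and invoking the logarithmic estimate of Lemma \ref{le:v2} (the analog of \eqref{a8}, now carrying the factor $(pA^{p-1}-\delta)^{-1}(A^{p-1}B^{1-p}-\delta)^{-1}$) would give, as in \eqref{18}, the bound $\|\partial_x^3u\|_{L^2}\le(2^{-1}A^{p-1}B^{1-p})^{-7B^{p-1}/(2pA^{2p-2})}\|\phi^{\prime\prime\prime}\|_{L^2}\,q(t)^{-7B^{p-1}/(2pA^{2p-2})}$. Substituting into the inner estimate for $B_1$ and optimizing at $\eta_2=q(t)^{-1+7B^{p-1}/(2pA^{2p-2})}$ balances the two regions; the resulting power of $q(t)$ equals $(1-7B^{p-1}/(2pA^{2p-2}))(1+\alpha)-1$, which is $\ge -2$ precisely because the constraint $4pA^{2p-2}>7B^{p-1}$ in \eqref{con:A,B} forces $7B^{p-1}/(2pA^{2p-2})<2$ and hence $(1-7B^{p-1}/(2pA^{2p-2}))(1+\alpha)\ge -1$ for $\alpha\in(-1,0)$. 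Since $q(t)\le 1$, this converts the power into the claimed $q(t)^{-2}$ bound on $K_1$ and completes the proof.
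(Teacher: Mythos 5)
Your proposal follows the paper's proof essentially verbatim: the paper establishes this lemma by repeating the argument of Lemma \ref{le:fKdV-nonlocal} (the same splittings of $K_0$ and $K_1$ at $\eta_1=q(t)$ and $\eta_2$, the same Sobolev control of $\|\partial_x^2u\|_{L^\infty}$ by $\|\partial_x^3u\|_{L^2}$) with the single modification of integrating the energy inequality \eqref{the third derivative} via Lemma \ref{le:v2} in place of Lemma \ref{le:a2}, which is exactly your route and produces the same exponent $-7B^{p-1}/(2pA^{2p-2})$, the same choice $\eta_2=q(t)^{-1+7B^{p-1}/(2pA^{2p-2})}$, and the same final use of the constraint $4pA^{2p-2}>7B^{p-1}$ in \eqref{con:A,B} to conclude the $q^{-2}(t)$ bound. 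If anything, you are more explicit than the paper about the one genuinely $p$-dependent step — the top-order term $\int_\R u^{p-1}\partial_xu\,(\partial_x^3u)^2\,dx$ in the $H^3$ energy estimate and the need to route $u^{p-1}$ through the amplitude bounds — which the paper silently subsumes by reusing \eqref{the third derivative} unchanged.
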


\begin{proof} 
	Compared with the proof of Lemma \ref{le:gfKdV-nonlocal}, the only difference is to apply Lemma \ref{le:v2} to replace Lemma \ref{le:a2} to estimate \(\|\partial_x^3u\|_{L^2}\) by using \eqref{the third derivative}
	\begin{equation*}
	\begin{aligned}
	\|\partial_x^3u\|_{L^2}
	&\leq \|\phi^{\prime\prime\prime}\|_{L^2}(A^{p-1}B^{1-p}-\delta)^{-\frac{7}{2(pA^{p-1}-\delta)(A^{p-1}B^{1-p}-\delta)}}\\
	&\quad\times q(t)^{-\frac{7}{2(pA^{p-1}-\delta)(A^{p-1}B^{1-p}-\delta)}}\\
	&\leq (2^{-1}A^{p-1}B^{1-p})^{-7B^{p-1}/(2pA^{2p-2})}\|\phi^{\prime\prime\prime}\|_{L^2}
	q(t)^{-7B^{p-1}/(2pA^{2p-2})}.
	\end{aligned}
	\end{equation*}

\end{proof}

We finally note that one can similarly extend the result in Theorem \ref{th:gKdV} to the generalized Burgers-Hilbert and Whitham equations (the cubic nonlinearity \(u^2u_x\) being replaced by the generalized nonlinearity \(u^pu_x\) \((p>1)\) in \eqref{eq:mBH} and \eqref{eq:mWhitham}).

\section{The case $\alpha>0$}
As aforementioned, most assertions in this Section are conjectures that will be motivated by numerical simulations.

\subsection{Solitary waves}

We first state and recall some theoretical results, starting by the solitary wave solutions, that is solutions of the form
$$u_c(x,t)=Q_c(x-ct),$$
where $Q_c\in H^{\alpha/2}(\R).$
They should satisfy the equation
\begin{equation}\label{sol}
D^{\alpha}Q_c + cQ_c\mp\frac{1}{3}Q_c^3=0.
 \end{equation}

The first one is classical and concerns the non-existence of solitary waves.

\begin{proposition}\label{nonex}
There exists no nontrivial solitary waves of \eqref{eq:main}:

1. In the defocusing case for all $\alpha>0$; 

2. In the focusing case when $0<\alpha\leq 1/2$ (energy super critical).

\end{proposition}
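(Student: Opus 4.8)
The plan is to derive Pohozaev-type identities for solitary-wave solutions $Q_c\in H^{\alpha/2}(\R)$ of \eqref{sol} and then combine them to force $Q_c\equiv 0$ in the two regimes. First I would record the three natural integral identities obtained by testing \eqref{sol} against $Q_c$, against $xQ_c'$ (the scaling/dilation multiplier), and noting the sign structure coming from the $\mp$ in the nonlinearity. Multiplying \eqref{sol} by $Q_c$ and integrating gives the energy relation
\begin{equation*}
\int_\R |D^{\alpha/2}Q_c|^2\,dx + c\int_\R Q_c^2\,dx \mp \frac{1}{3}\int_\R Q_c^4\,dx = 0,
\end{equation*}
where I have used that $\int (D^\alpha Q_c)Q_c = \int |D^{\alpha/2}Q_c|^2$ by self-adjointness and the Fourier symbol $|\xi|^\alpha$. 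The Pohozaev identity obtained from the dilation multiplier $xQ_c'$ yields a second relation in which the three terms $\int |D^{\alpha/2}Q_c|^2$, $\int Q_c^2$, $\int Q_c^4$ appear with different numerical weights (the kinetic term picks up a factor involving $\alpha$, the $L^2$ term a factor $1$, and the quartic term a factor $1/2$, up to signs). Eliminating two of the three integrals between these identities produces a single scalar constraint whose sign I can then analyze.

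For the defocusing case (the $+$ sign in $\mp$, so $-\tfrac13 Q_c^4$) and $\alpha>0$, every term in the energy relation is nonnegative once $c\ge 0$, so the relation immediately forces $Q_c=0$; the substantive point is handling the sign/range of the admissible speed $c$, which I expect is pinned down by requiring $Q_c\in H^{\alpha/2}$ together with the Pohozaev relation. For the focusing energy-supercritical case $0<\alpha\le 1/2$, the argument is genuinely scaling-driven: here $H^{1/4}\subset L^4$ is the critical embedding (as noted in the introduction, $\alpha=1/2$ is the energy-critical exponent), so for $\alpha<1/2$ the quartic term cannot be controlled in a way consistent with the Pohozaev balance, and the two identities are compatible only when all integrals vanish. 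Concretely, I would combine the two identities to isolate a relation of the form $(\text{positive multiple})\int|D^{\alpha/2}Q_c|^2 = (\text{nonpositive multiple})\int Q_c^2$ or an analogous incompatibility, which is possible precisely in the stated range of $\alpha$.

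The main obstacle is rigorously justifying the Pohozaev identity for the nonlocal operator $D^\alpha$: the integration-by-parts computation with the multiplier $xQ_c'$ against $D^\alpha Q_c$ must be carried out in Fourier variables, where $\int (D^\alpha Q_c)(xQ_c')\,dx$ becomes $\int |\xi|^\alpha \widehat{Q_c}\,\overline{\widehat{xQ_c'}}$ and requires care with the distributional derivative of $\widehat{Q_c}$ and with decay of $Q_c$ at infinity to ensure all boundary contributions vanish. I would secure this by first establishing enough regularity and decay of $Q_c$ from the equation (bootstrapping from $Q_c\in H^{\alpha/2}$ using that $D^\alpha Q_c = -cQ_c\pm\tfrac13 Q_c^3$ and the smoothing of the resolvent $(D^\alpha+c)^{-1}$ for $c>0$), so that the multiplier identity holds in the classical sense. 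Once the two identities are rigorous, the sign analysis closing each case is routine arithmetic.
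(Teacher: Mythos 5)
Your proposal follows essentially the same route as the paper: test \eqref{sol} against $Q_c$ to get the energy relation, against $xQ_c'$ to get the Pohozaev relation (the paper imports the key nonlocal identity $\int_\R (D^{\alpha}\phi)\,x\phi'\,dx=\tfrac{\alpha-1}{2}\int_\R|D^{\alpha/2}\phi|^2\,dx$ from Lemma 3 of \cite{KMR} rather than re-deriving it), and then combine the two to obtain the sign incompatibility $(1-2\alpha)\int_\R|D^{\alpha/2}Q_c|^2\,dx+c\int_\R Q_c^2\,dx=0$ in the focusing case, with the defocusing case handled by the energy identity. Your plan is correct in structure, and your extra care about justifying the Pohozaev identity and about the admissible range of the speed $c$ is, if anything, more scrupulous than the paper's own treatment.
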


\begin{proof}
The proof is similar to that of the quadratic case (see \cite{LPS3}).  By \eqref{sol}, we have the energy identity
 $$\int_\R |D^{\alpha/2}Q_c|^2\, dx+c\int_\R Q_c^2\, dx\mp\frac{1}{3}\int_\R Q_c^4\, dx=0,$$
and the Pohozaev identity
$$\frac{\alpha-1}{2}\int_\R |D^{\alpha/2}Q_c|^2\,  dx-\frac{c}{2}\int_\R Q_c^2\, dx\pm\frac{1}{12}\int_\R Q_c^4 \, dx=0,$$
which in turn is a consequence of the identity (see for instance Lemma 3 in \cite{KMR})
\begin{equation*}
\int_{\mathbb R}(D^{\alpha}\phi)x\phi^\prime\, dx=\frac{\alpha-1}2\int_{\mathbb R}|D^{\alpha/2}\phi|^2\, dx,
\end{equation*}
imply in the focusing case
\begin{equation*}
(1-2\alpha)\int_\R |D^{\alpha/2} Q_c|^2\, dx+c\int_\R Q_c^2\, dx=0
\end{equation*}
proving that no finite energy solitary waves exist in the energy supercritical case $\alpha>1/2$ when $c\leq 0$.

The same conclusion holds for any $\alpha>0$ in the defocusing case just by the energy identity.

\end{proof}

The existence of non trivial solutions for \eqref{sol} in the admissible range is standard and we refer for instance to \cite{FL} from which we extract the 

\begin{proposition}\label{FL-SW}
Let $1/2<\alpha<2$. 

(i) (Existence) There exists a solution $Q\in H^{\alpha/2}(\R)$ of \eqref{sol} such that $Q(|x|)>0$ is even, positive and strictly decreasing in $|x|.$ 

(ii) (Symmetry and monotonicity) If $Q\in H^{\alpha/2}(\R)$ with 
$Q\geq 0$ and $Q\not\equiv 0,$ then there exists $x_0\in \R$ such that $Q(\cdot-x_0)$ is even, positive and strictly decreasing in $|x-x_0|.$

(iii) (Regularity and decay) If $Q\in H^{\alpha/2}(\R)$ solves 
\eqref{sol}, then $Q\in H^{\alpha +1}(\R).$ Moreover we have the decay estimate
$$|Q(x)|+|xQ'(x)|\leq \frac{C}{1+|x|^{\alpha +1}},$$
for all $x\in \R$ and some constant $C>0.$
\end{proposition}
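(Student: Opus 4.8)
The plan is to follow the variational and symmetrization strategy of Frank--Lenzmann. First I would remove the parameter $c$: if $Q$ solves \eqref{sol} with $c=1$ in the focusing sign, then $Q_c(x)=\sqrt{c}\,Q(c^{1/\alpha}x)$ solves it for general $c>0$, so it suffices to treat
\begin{equation*}
D^\alpha Q + Q = \tfrac13 Q^3.
\end{equation*}
The natural object is the Weinstein (Gagliardo--Nirenberg) quotient
\begin{equation*}
W(u)=\frac{\norm{D^{\alpha/2}u}_{L^2}^{2/\alpha}\,\norm{u}_{L^2}^{4-2/\alpha}}{\norm{u}_{L^4}^4},
\end{equation*}
whose Euler--Lagrange equation is, after a scaling, exactly \eqref{sol}. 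The restriction $\alpha>1/2$ is precisely what makes the exponent $4-2/\alpha$ positive, placing the quartic term strictly below the $L^2$-critical threshold; this subcriticality is what drives compactness.

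For part (i) I would prove that $\inf W$ is attained. Taking a minimizing sequence and replacing each term by its symmetric-decreasing rearrangement, the fractional P\'olya--Szeg\H{o} inequality does not increase $\norm{D^{\alpha/2}\cdot}_{L^2}$ while the Riesz rearrangement inequality does not decrease $\norm{\cdot}_{L^4}$, so one may assume the minimizers are even and nonincreasing in $\abs{x}$. Using the two scaling invariances to normalize $\norm{u}_{L^2}=\norm{D^{\alpha/2}u}_{L^2}=1$, strict subcriticality yields compactness of the symmetric-decreasing sublevel set in $L^4$ (via the fractional Strauss-type decay $\abs{u(x)}\lesssim\abs{x}^{-1/2}$ on that class, or equivalently concentration--compactness ruling out vanishing and dichotomy), producing an optimizer $Q$. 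Scaling $Q$ gives a solution of \eqref{sol} that is even and nonincreasing; positivity and strict monotonicity then follow from the positivity of the convolution kernel of $(D^\alpha+1)^{-1}$ applied to $Q=(D^\alpha+1)^{-1}(\tfrac13 Q^3)$, together with the strong maximum principle for $D^\alpha$.

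Part (ii), the symmetry and monotonicity of an \emph{arbitrary} nonnegative nontrivial solution, is the step I expect to be the main obstacle, since rearrangement only controls the variationally selected solution. Here I would realize $D^\alpha=(-\Delta)^{\alpha/2}$ with $\alpha/2\in(1/4,1)$ through the Caffarelli--Silvestre extension, turning \eqref{sol} into a degenerate-elliptic Neumann problem on the half-plane with the Muckenhoupt weight $y^{1-\alpha}$, and then run the method of moving planes on the extended problem. The decay from part (iii) supplies the behaviour at infinity needed to start the planes, and a Hopf-type strong maximum principle for the weighted extension closes the sliding, forcing evenness about some $x_0$ and strict monotonicity in $\abs{x-x_0}$.

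Finally, for part (iii) I would bootstrap from $Q=(D^\alpha+1)^{-1}(\tfrac13 Q^3)$: since $(D^\alpha+1)^{-1}$ gains $\alpha$ derivatives in $L^2$-based spaces and $H^s(\R)$ is an algebra embedding into $L^\infty$ once $s>1/2$, starting from $Q\in H^{\alpha/2}$ one climbs in finitely many steps to the stated $H^{\alpha+1}$ regularity (indeed to smoothness). The pointwise decay $\abs{Q(x)}+\abs{xQ'(x)}\lesssim (1+\abs{x})^{-(1+\alpha)}$ comes from the decay of the kernel of $(D^\alpha+1)^{-1}$, which behaves like $\abs{x}^{-(1+\alpha)}$ at infinity; feeding a crude first decay bound for $Q$ into the convolution and iterating upgrades it to the sharp rate, and differentiating the integral representation produces the matching bound for $xQ'$.
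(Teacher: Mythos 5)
The paper offers no proof of this proposition: it is quoted verbatim from Frank--Lenzmann \cite{FL}, with the sentence preceding it stating that existence ``is standard'' and the remark following it noting only that $Q$ arises as a minimizer of the Weinstein functional. Your outline is therefore not in competition with an argument in the paper; rather, it reconstructs essentially the strategy of the cited reference itself --- Weinstein/Gagliardo--Nirenberg minimization with symmetric-decreasing rearrangement and the $1$D Strauss-type decay $\abs{u(x)}\lesssim \abs{x}^{-1/2}$ for existence, moving planes for symmetry of arbitrary nonnegative solutions, and resolvent bootstrap plus the $\abs{x}^{-(1+\alpha)}$ kernel decay for regularity and spatial decay --- so at the level of approach it matches both the paper's remark and its source. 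Two points in your sketch conceal real work and deserve flagging. First, in part (iii) the initial bootstrap step is not quite as you describe when $\alpha/2\le 1/2$: $H^{\alpha/2}(\R)$ is then not an algebra, so you cannot start by saying $Q^3\in H^{\alpha/2}$; one must first climb through Lebesgue spaces via Sobolev embedding ($Q\in L^{p}$ for $p\le 2/(1-\alpha)$, hence $Q^3\in L^{p/3}$, etc.) before reaching the algebra range $s>1/2$ where your iteration takes over. Second, the bound on $xQ'$ is more delicate than ``differentiating the integral representation'': for $\alpha<1$ the kernel $K$ of $(D^\alpha+1)^{-1}$ behaves like $\abs{x}^{\alpha-1}$ at the origin, so $K'$ is not locally integrable and $Q'=K'\ast(Q^3/3)$ cannot be used naively; and the alternative $Q'=K\ast(Q^2Q')$ only yields $\abs{Q'}\lesssim \abs{x}^{-1-\alpha}$ (the convolution cannot decay faster than $K$), which gives $\abs{xQ'}\lesssim\abs{x}^{-\alpha}$, weaker than claimed. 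Recovering the stated rate requires the splitting/integration-by-parts argument on $xK'$ away from the origin as in \cite{FL}. These are repairable technical gaps in an otherwise faithful reconstruction of the cited proof.
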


\begin{remark}
In fact $Q$ can be obtained as a minimizer of the Weinstein functional
$$J^\alpha(u)=\left(\int_\R |u|^4\, dx\right)^{-1}\left(\int_\R 
|D|^{\alpha/2}u|^2 \, dx\right)^{1/\alpha}\left(\int_\R u^2\, 
dx\right)^{2-\frac{1}{\alpha}},$$
or optimizers of the Gagliardo-Nirenberg inequality 
$$\int_\R |u|^4\, dx\leq C_\alpha\left(\int_\R |D|^{\alpha/2}u|^2\, dx\right)^{1/\alpha}\left(\int_\R u^2 \, dx\right)^{2-\frac{1}{\alpha}},.$$

\end{remark}

Actually, following \cite{FL}, for $\alpha>1/2,$ a ground state solution of
\begin{equation}\label{GS}
D^\alpha Q+Q-Q^3/3=0
\end{equation}
is a positive and even solution that minimizes the Weinstein functional on $H^{\alpha/2}(\R)\setminus \{0\}.$ 

Concerning stability, one can in a straightforward way extend the 
results of \cite{LPS3} that concerned the (quadratic) fKdV equation to the focusing modified fKdV equation to prove that the ground state is (conditionally) stable in the $L^2$ subcritical case $\alpha>1$. We refer to \cite{pava} for a complete analysis  of stability issues for the focusing modified fKdV equation (and of fKdV with higher nonlinearities) in the $L^2$ subcritical regime.


Note that 
\begin{equation*}
	Q_{c}(x)= \sqrt{c}Q(c^{1/\alpha}x)
	\label{Qc},
\end{equation*}
where we have put $Q_{1} = Q$,
so that 
$$\|Q_c\|_{L^2}^2=c^{\frac{\alpha-1}{\alpha}}\|Q\|_{L^2}^2,$$
proving that  the solitary waves may have arbitrary small $L^2$ norm by taking arbitrary large velocities when $1/2<\alpha<1$ (resp. arbitrary small velocities when $\alpha>1$). This excludes scattering of small solutions in the $L^2$ norm.

The case $\alpha =1$ (modified Benjamin-Ono) is $L^2$ critical and the solitary waves have constant $L^2$ norm.

Similarly, 
$$\|D^{\alpha/2}Q_c\|_{L^2}^2=c^{\frac{3\alpha-2}{2\alpha}}\|D^{\alpha/2}Q\|^2_{L^2}$$
proving for instance that no scattering in the energy norm is possible when $1/2<\alpha<1$.

 We show some ground states of the equation (\ref{eq:main}) with the 
$+$ sign for $c=1$ and various values of $\alpha$  in 
Fig.~\ref{mfKdVsol}. The ground states get more and more peaked the 
smaller the dispersion is. Compared to the ground states for the fKdV 
equation in \cite{KS} their maximum is smaller here because of the 
higher nonlinearity. 
\begin{figure}[htb!]
  \includegraphics[width=0.7\textwidth]{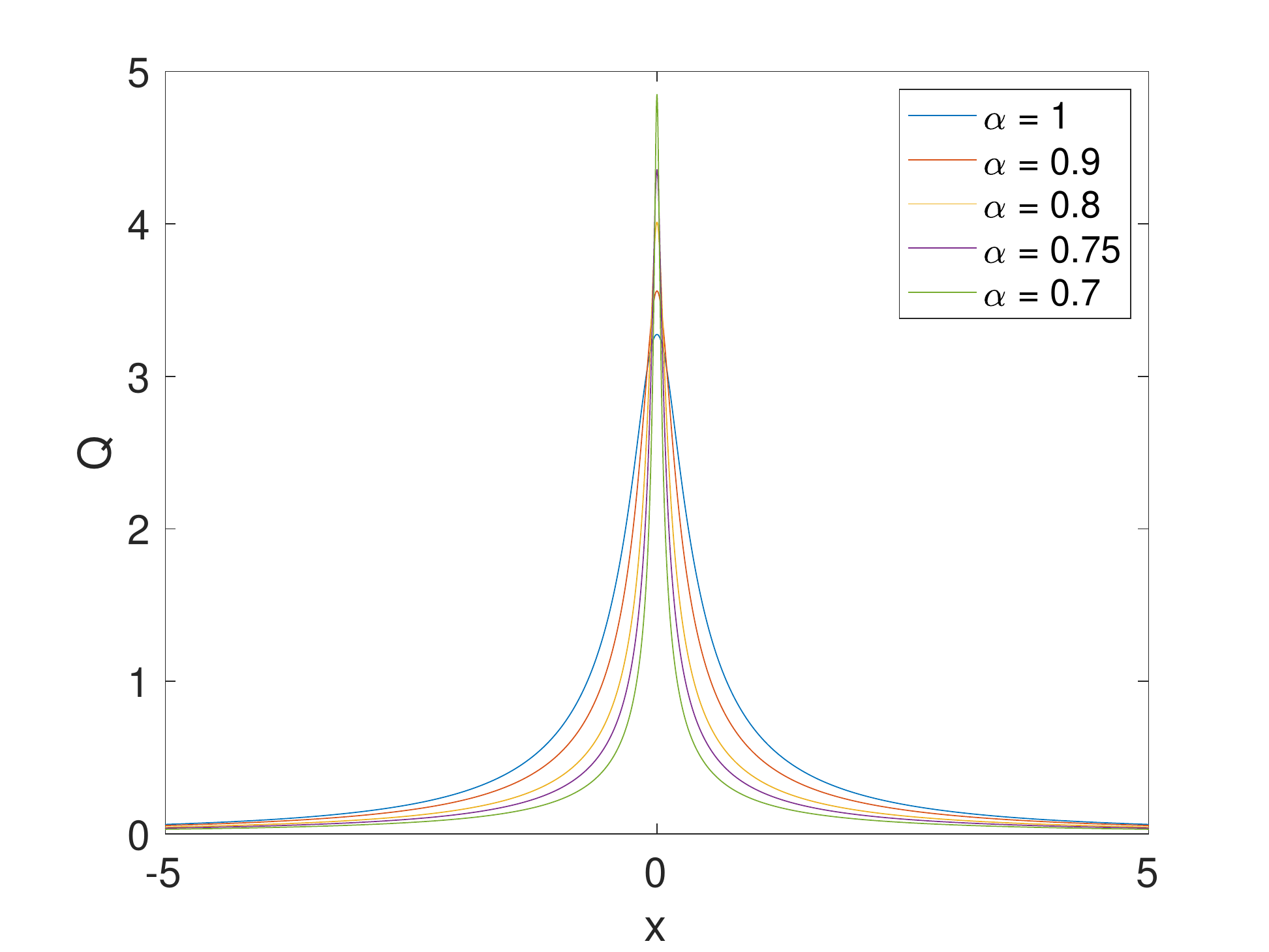}
  \caption{Ground states of the equation (\ref{eq:main}) with the $+$ 
  sign for $c=1$ and various values of $\alpha$.}
 \label{mfKdVsol}
\end{figure}

\subsection{The Cauchy problem: generalities}

It is straightforward to prove that the modified fKdV equation is locally well posed in $H^s(\R), s>3/2$ when $\alpha\geq-1.$ On the other hand the modified Burgers equation is ill-posed in $H^{3/2}(\R)$ as can be checked by extending the proof in \cite{LPS2} for the usual Burgers equation.

Adding  a dispersive perturbation  ($\alpha>0$) has the effect  to 
enlarge the space of resolution for the local Cauchy problem. Actually it was proven in \cite{LPS3} that the Cauchy problem for the fKdV equation is locally well-posed in $H^s(\R), s>\frac{3}{2}-\frac{3\alpha}{8}.$ This was improved in \cite{MPV} to $s>\frac{3}{2}-\frac{5\alpha}{4}$ leading to the global well-posedness of the Cauchy problem in the energy space when $\alpha>6/7.$ The expected global well-posedness for the whole range $1/2<\alpha<1$ is still open.

A similar enlargement of the space of the resolution for the local Cauchy problem is expected for the modified fKdV equation but this is outside  the scope of the present paper that is focused on global issues. We just recall that the Cauchy problem for the modified Benjamin-Ono ($\alpha=1$), in both the focusing and defocusing case,  was proven to be locally well-posed in $H^s(\R), s\geq 1/2$ by Kenig and Takaoka \cite{KT} extending a previous work of Molinet and Ribaud \cite{MR} who proved the result for $s>1/2.$

\subsection{The Cauchy problem: focusing case} 
For the focusing modified Benjamin-Ono equation ($\alpha=1$), Martel and Pilod \cite{MP} proved the finite time blow-up by constructing a minimal mass blow-up solution.

For the other values of $\alpha$ we a priori  formulate the following conjectures which will be reinforced by the numerical simulations below:

(i) Finite time blow-up in the case $1/2<\alpha<1$ similar to the 
gKdV equation

$$u_t+u^pu_x+u_{xxx}=0,$$

when $p>4.$

(ii) Finite time blow-up in the energy supercritical case $0<\alpha<1/2.$

(iii) Global well-posedness and soliton resolution when $\alpha>1.$

\begin{remark}
It is well known that the focusing modified KdV equation possesses special solutions that are periodic in t and localized in x, the so-called breather solution (see for instance \cite{AlMu} and the references therein). Establishing the existence of such solutions for the focusing modified mKdV when $\alpha>1/2$ is an interesting open question.
\end{remark}

\subsection{The Cauchy problem: defocusing case}

In the energy subcritical case $\alpha >1/2$, one obtains classically by a compactness method the global existence of weak solutions:

\begin{proposition}\label{weak}
Let $\alpha>1/2$ and $u_0\in H^{\alpha/2}(\R).$ Then there exists a solution $u\in L^\infty(\R: H^{\alpha/2}(\R))$ of \eqref{eq:main} with initial data $u_0.$
  \end{proposition}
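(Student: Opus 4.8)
The plan is to run the classical vanishing-viscosity (Friedrichs) regularization together with a compactness argument, the decisive point being that in the defocusing, energy-subcritical regime $\alpha>1/2$ the two formally conserved quantities of \eqref{eq:main}, namely the mass $\int_\R u^2\,dx$ and the Hamiltonian
\[
H_\alpha(u)=\frac{1}{2}\int_\R\Big(|D^{\alpha/2}u|^2+\tfrac{1}{6}u^4\Big)\,dx,
\]
are \emph{both} sign-definite and therefore control $\|u(t)\|_{H^{\alpha/2}}$ for all time. Since \eqref{eq:main} is invariant under the reversal $u(t,x)\mapsto u(-t,-x)$, it suffices to construct the solution for $t\ge 0$. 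First I would regularize the defocusing equation by adding a dissipative term, solving for $\epsilon>0$ the semilinear parabolic problem
\[
\partial_tu_\epsilon-u_\epsilon^2\partial_xu_\epsilon-|D|^\alpha\partial_xu_\epsilon=\epsilon\,\partial_x^2u_\epsilon,\qquad u_\epsilon(0)=u_0 .
\]
The added viscosity turns this into a parabolic equation that is globally well posed with smooth solutions by standard fixed-point and energy arguments, global existence being guaranteed a posteriori by the uniform bounds below.

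Next I would establish the uniform-in-$\epsilon$ a priori bounds. Testing the regularized equation against $u_\epsilon$ and using $u_\epsilon^2\partial_xu_\epsilon=\tfrac13\partial_x(u_\epsilon^3)$ together with the skew-adjointness of $|D|^\alpha\partial_x$ gives $\tfrac12\frac{d}{dt}\|u_\epsilon\|_{L^2}^2=-\epsilon\|\partial_xu_\epsilon\|_{L^2}^2\le 0$, so the mass is non-increasing. Writing the equation in Hamiltonian form $\partial_tu_\epsilon=\partial_x\frac{\delta H_\alpha}{\delta u}+\epsilon\partial_x^2u_\epsilon$ and differentiating $H_\alpha$ along the flow, the skew-gradient part contributes nothing and the viscous part produces the sign-definite dissipation $\frac{d}{dt}H_\alpha(u_\epsilon)=-\epsilon\int_\R|\xi|^\alpha|\widehat{\partial_xu_\epsilon}|^2\,d\xi-\epsilon\int_\R u_\epsilon^2(\partial_xu_\epsilon)^2\,dx\le 0$. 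Hence $H_\alpha(u_\epsilon(t))\le H_\alpha(u_0)$, and since both summands of $H_\alpha$ are nonnegative this bounds $\|D^{\alpha/2}u_\epsilon(t)\|_{L^2}$; combined with the mass bound it yields $\sup_{t\ge0}\|u_\epsilon(t)\|_{H^{\alpha/2}}\lesssim\|u_0\|_{H^{\alpha/2}}$ uniformly in $\epsilon$.

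Finally I would pass to the limit $\epsilon\to0$. The uniform bound produces, along a subsequence, a weak-$*$ limit $u\in L^\infty(\R_+:H^{\alpha/2})$. The main obstacle, as in every such compactness proof, is the nonlinearity: weak convergence does not pass through the cubic term, so I would upgrade to strong local compactness via the Aubin--Lions--Simon lemma. Using the equation one bounds $\partial_tu_\epsilon$ uniformly in $L^\infty(\R_+:H^{-s})$ for some finite $s$, since $\partial_x(u_\epsilon^3)$ is controlled through the embedding $H^{\alpha/2}\hookrightarrow L^4$ (valid precisely because $\alpha>1/2$), $|D|^\alpha\partial_xu_\epsilon\in H^{-1-\alpha/2}$, and $\epsilon\partial_x^2u_\epsilon$ is bounded (indeed tends to $0$). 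Interpolating the compact embedding $H^{\alpha/2}\hookrightarrow\hookrightarrow L^2_{\mathrm{loc}}$ with this time-derivative bound gives strong convergence $u_\epsilon\to u$ in $L^2_{\mathrm{loc}}$ of space-time; interpolating once more with the uniform $L^4$ bound promotes it to $L^3_{\mathrm{loc}}$, whence $u_\epsilon^3\to u^3$ in $L^1_{\mathrm{loc}}$ and $\partial_x(u_\epsilon^3)\to\partial_x(u^3)$ in the sense of distributions. The linear terms pass to the limit by weak convergence and $\epsilon\partial_x^2u_\epsilon\to0$, so $u$ solves \eqref{eq:main} distributionally with data $u_0$; the reversal symmetry then furnishes the solution for $t\le0$, giving $u\in L^\infty(\R:H^{\alpha/2})$. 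The delicate point throughout is this local-compactness step, where one must ensure the negative-order bound on $\partial_tu_\epsilon$ and the spatial compactness are applied consistently on compact space-time sets.
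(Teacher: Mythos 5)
Your proposal is correct and takes essentially the same approach the paper intends: the paper gives no proof at all, asserting only that the result follows ``classically by a compactness method'' from the defocusing conservation laws, and your vanishing-viscosity regularization with the non-increasing mass and Hamiltonian (both sign-definite in the defocusing case, with $\alpha>1/2$ ensuring $H^{\alpha/2}(\R)\hookrightarrow L^4(\R)$ so that $H_\alpha(u_0)$ is finite and controls the energy norm) followed by Aubin--Lions compactness is precisely that classical argument. The points you compress --- global solvability of the viscous problem for each fixed $\epsilon$ and the attainment of the initial datum --- are standard and consistent with the paper's own level of detail.
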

  
  
  We recall that for the modified defocusing Benjamin-Ono equation ($\alpha =1$) Kenig and Takaoka \cite{KT} proved the global well-posedness in $H^s(\R), s\geq 1/2$.

One might a priori conjecture that  global well-posedness holds for $\alpha>1/2$ but the case  $0<\alpha\leq 1/2$ is unclear.

We now present numerical simulations that support the previous conjectures.

Equation (\ref{eq:main}) is invariant under the rescaling 
\begin{equation}\label{resc}
	y = \frac{x}{L},\quad \frac{d\tau}{dt} = 
	\frac{1}{L^{1+\alpha}},\quad U = L^{\alpha/2}u,
\end{equation}
i.e., if $u(x,t)$ is a solution to equation (\ref{eq:main}), so is 
$U(y,\tau)$ for constant $L$. If in (\ref{resc}) one lets $L$ depend 
on $\tau$, one speaks of a \emph{dynamic rescaling}, and 
(\ref{eq:main}) takes the form
\begin{equation}\label{eqresc}
	U_{\tau}-\partial_{\tau} (\ln L)(yU_{y}+\alpha U/2) \pm 
	U^{2}U_{y}-|D_{y}|^{\alpha}U_{y} = 0.
\end{equation}
The mass $m[u]$ (the square of the $L^{2}$ norm) and the energy 
$H[u]$ have the following scaling behavior under 
(\ref{resc}), 
\begin{equation}\label{mH}
	m[U] = L^{1-\alpha}m[u], \quad H[U] = L^{1-2\alpha}H[u].
\end{equation}
This means, as was previously noticed, the equation (\ref{eq:main}) is $L^{2}$ 
critical for 
$\alpha=1$, the modified Benjamin-Ono equation, and energy critical 
for $\alpha=1/2$. Thus a self-similar blow-up is expected in the 
focusing case for $0<\alpha\leq 1$, in the rescaling (\ref{resc}) 
such that $\tau\to\infty$ and $L(\tau)\to 0$ in this case. In this 
limit, one gets the blow-up profile  $U^{\infty}$ from 
(\ref{eqresc}), 
\begin{equation}\label{profile}
	a^{\infty}(yU_{y}^{\infty}+\alpha U^{\infty}/2) + 
	(U^{\infty})^{2}U_{y}^{\infty}-|D_{y}|^{\alpha}U_{y}^{\infty} = 0,
\end{equation}
where $a = -(\ln L)_{\tau}$ and $\lim_{\tau\to\infty}a =a^{\infty}$. 
In the $L^{2}$ critical case one expects $a^{\infty}=0$ in which case 
the blow-up profile will be just the solitary wave of the modified 
Benjamin-Ono equation. In the general case, the blow-up profile 
depends on $a^{\infty}$, and it is not known whether there are 
localized solutions to (\ref{profile}).

In \cite{KP}, we have studied 
blow-up in solutions to generalized KdV equations, and it was shown 
that it is numerically problematic to solve the rescaled equation 
(\ref{eqresc}). Instead we solved the generalized KdV equation 
numerically, and fitted certain norms as the $\|u\|_{L^\infty}$ and 
$\|u_{x}\|_{L^2}$ to certain blow-up models. In the $L^{2}$ critical 
case one expects $L\propto 1/\tau$, and thus 
\begin{equation}\label{critscal}
	\|u\|_{L^\infty}\propto (t^{*}-t)^{-1/2},\quad 
	\|u_{x}\|_{L^2}^{2} \propto (t^{*}-t)^{-(1+1/\alpha)}.
\end{equation} 
In the $L^{2}$ supercritical one expects $L\propto \exp(-\kappa 
\tau)$ with $\kappa>0$. In this case one has 
\begin{equation}\label{supcritscal}
	\|u\|_{L^\infty}\propto (t^{*}-t)^{-\alpha/(1+\alpha)/2},\quad 
	\|u_{x}\|_{L^2}^{2} \propto (t^{*}-t)^{-1}.
\end{equation}
In the case of a blow-up, these norms will be fitted to the above 
models to validate the conjectured blow-up scenario. 

\subsection{Numerical study of the focusing case}

We first consider the focusing $L^{2}$ subcritical case for 
$\alpha=1.5$ with $N=2^{12}$ Fourier modes for $x\in10[-\pi,\pi]$ and 
$N_{t}=10^{4}$ time steps for $t\in[0,1]$.  For small 
enough initial data, there is just dispersion of the data as can be 
seen for the initial data $u(x,0)=\exp(-x^{2})$ on the left of 
Fig.~\ref{mfKdV15gauss} on the left. Dispersive radiation is emitted 
to the left, no stable structure seems to appear. This is also 
confirmed by the $L^{\infty}$ norm of the solution in the same figure 
on the right. It appears to be monotonically decreasing. 
\begin{figure}[htb!]
  \includegraphics[width=0.49\textwidth]{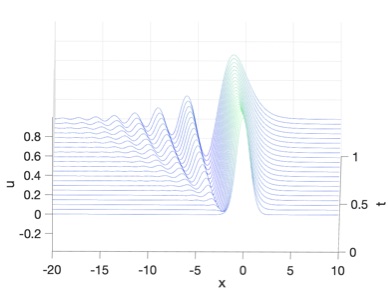}
  \includegraphics[width=0.49\textwidth]{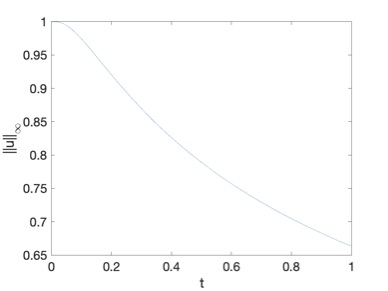}
 \caption{Solution to equation (\ref{eq:main}) with the $+$ sign for 
 $\alpha=1.5$ and 
 the initial data $u(x,0)=\exp(-x^{2})$  on the 
 left, and the $L^{\infty}$ norm of the solution on the right.}
 \label{mfKdV15gauss}
\end{figure}

We are here in a scattering scenario which suggests that the initial data 
$u(x,0)=\exp (-x^2)$ leads to a solution without solitary waves.

The situation changes if initial data of larger norm are considered. 
In Fig.~\ref{mfKdV15_5gauss} we show the solution in the same setting 
as in Fig.~\ref{mfKdV15gauss}, but this time for the initial data 
$u(x,0)=5\exp(-x^{2})$. In addition to the dispersive radiation 
emitted towards $-\infty$, there is now a solitary wave traveling to 
the right. 
\begin{figure}[htb!]
  \includegraphics[width=0.7\textwidth]{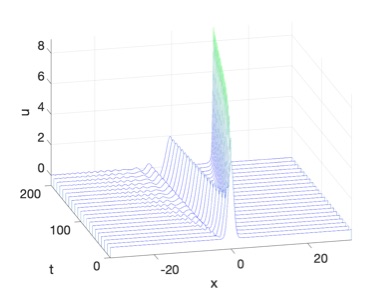}
 \caption{Solution to equation (\ref{eq:main}) with the $+$ sign for 
 $\alpha=1.5$ and 
 the initial data $u(x,0)=5\exp(-x^{2})$.}
 \label{mfKdV15_5gauss}
\end{figure}

This is confirmed by the $L^{\infty}$ norm of the solution on the 
left of Fig.~\ref{mfKdV15_5gaussinf}. To make this even more 
explicit, we show on the right of the same figure the solution of 
Fig.~\ref{mfKdV15_5gauss} at the final time $t=1$ together with the 
solitary wave fitted according to (\ref{Qc}) in green. It can be 
seen that the solitary wave is already fully developed, possibly the 
smaller hump to the left of the large solitary wave will become 
a ground state of smaller velocity. This suggests that the soliton 
resolution conjecture also applies to this equation. 
\begin{figure}[htb!]
  \includegraphics[width=0.49\textwidth]{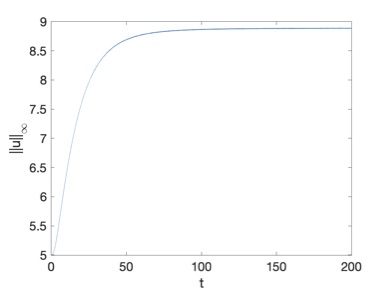}
  \includegraphics[width=0.49\textwidth]{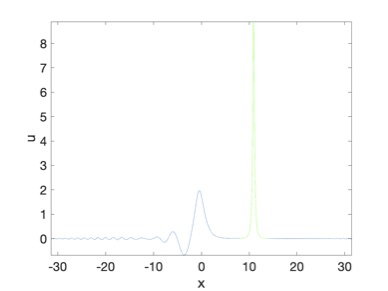}
 \caption{$L^{\infty}$ norm of the solution to equation (\ref{eq:main}) with the $+$ sign for 
 $\alpha=1.5$ and 
 the initial data $u(x,0)=5\exp(-x^{2})$  on the 
 left, and the solution for $t=1$ with a fitted solitary wave  on the 
 right in green.}
 \label{mfKdV15_5gaussinf}
\end{figure}

Next we consider the $L^{2}$ critical case $\alpha=1$, i.e., the 
modified Benjamin-Ono equation. For the initial data 
$u(x,0)=5\exp(-x^{2})$, we use $N=2^{14}$ Fourier modes for 
$x\in5[-\pi,\pi]$ and $N_{t}=10^{4}$ time steps for $t\in[0,0.081]$. 
The code breaks shortly after the final time since relative energy conservation is 
no longer assured to the order of $10^{-3}$. The solution for various 
values of $t$ can be seen in Fig.~\ref{mfKdVa1_5gauss}. A 
solitary wave detaches from the initial hump and
eventually appears to blow up.  
\begin{figure}[htb!]
  \includegraphics[width=0.49\textwidth]{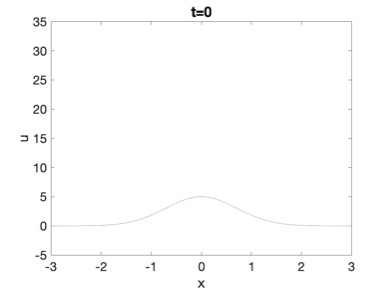}
  \includegraphics[width=0.49\textwidth]{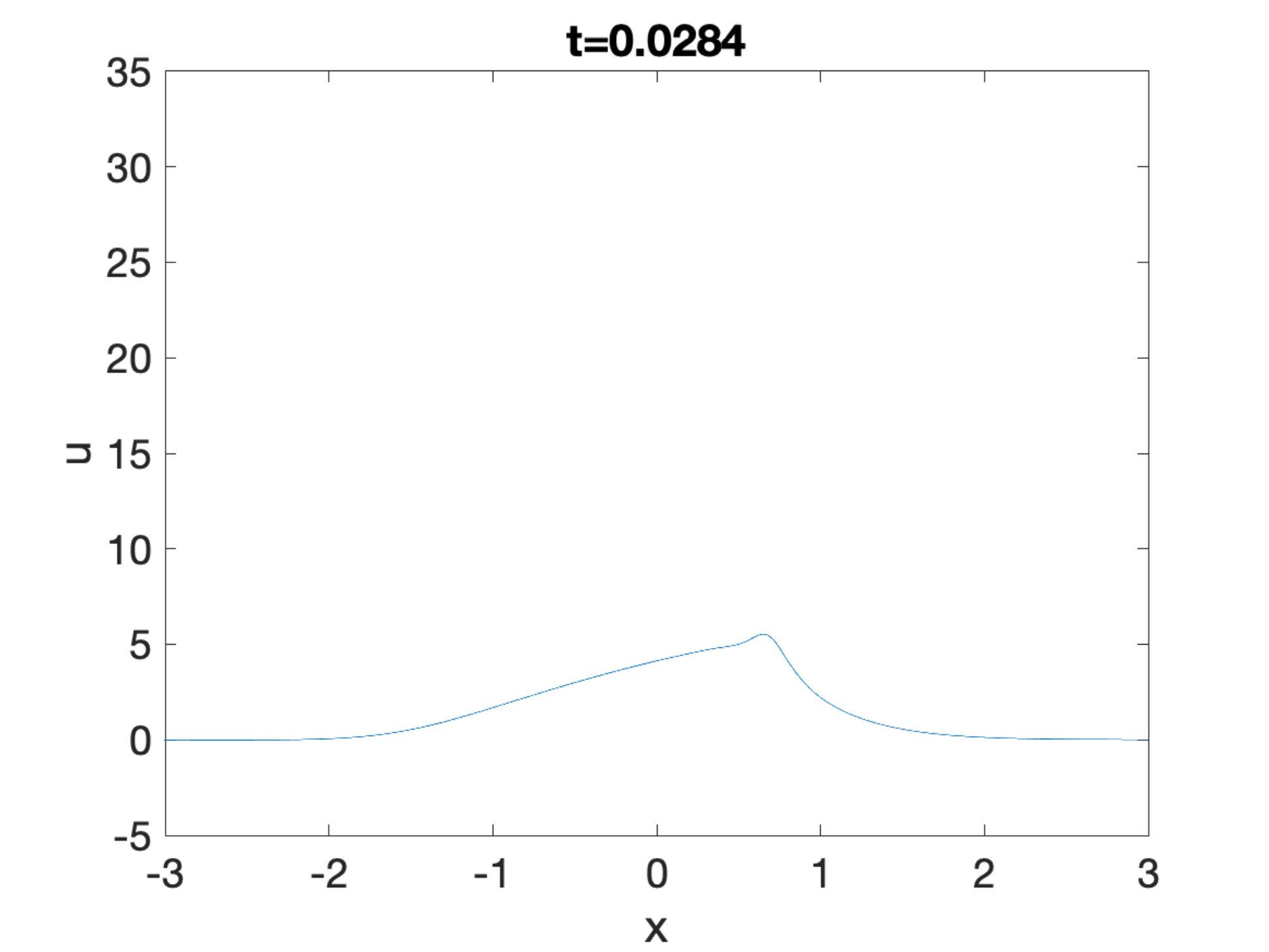}\\
   \includegraphics[width=0.49\textwidth]{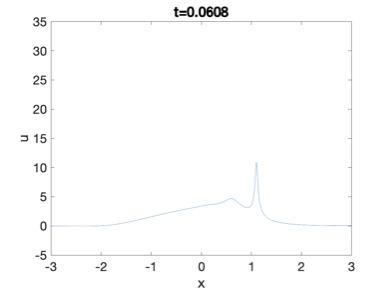}
  \includegraphics[width=0.49\textwidth]{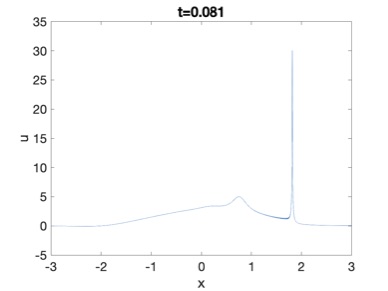}
\caption{Solution to equation (\ref{eq:main}) with the $+$ sign for 
 $\alpha=1$  for 
 the initial data $u(x,0)=5\exp(-x^{2})$ at various times.}
 \label{mfKdVa1_5gauss}
\end{figure}

An $L^{\infty}$ blow-up is also confirmed by the $L^{\infty}$ norm 
and the $L^{2}$ norm of the gradient as shown in 
Fig.~\ref{mfKdVa1_5gaussinf}.
\begin{figure}[htb!]
  \includegraphics[width=0.49\textwidth]{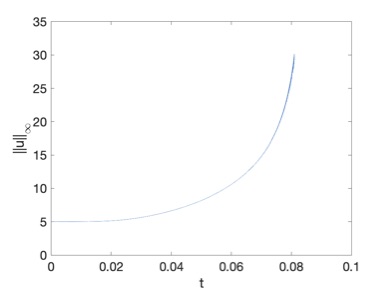}
  \includegraphics[width=0.49\textwidth]{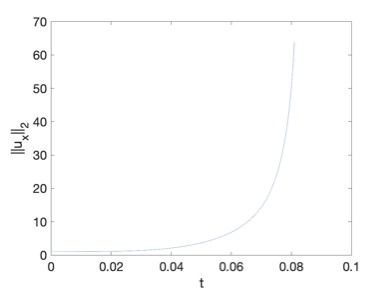}
\caption{$L^{\infty}$ norm on the left and $\|u_{x}\|_{L^2}$ 
(normalized to 1 for $t=0$) on the 
right for the solution to equation (\ref{eq:main}) with the $+$ sign for 
 $\alpha=1$  for 
 the initial data $u(x,0)=5\exp(-x^{2})$.}
 \label{mfKdVa1_5gaussinf}
\end{figure}

If we fit the norms in Fig.~\ref{mfKdVa1_5gaussinf} for the last 500 
time steps according to 
\begin{equation}
	\ln \|u\|_{L^\infty}\sim a \ln (t^{*}-t)+b
	\label{fit}
\end{equation}
in \cite{KS}, then we get a minimum residual for the values $a=-0.54$,     
$b=0.49$, and     $t^{*}=0.0854$. Similarly we get for $\|u_{x}\|_{L^2}$ 
the fitted values $a=-1.04$,    $b=-1.62$ and     $t^{*}=0.085$. The 
good agreement of the blow-up times $t^{*}$ is an indicator of the 
quality of these results which do not change much if a slightly 
smaller or larger number of points are taken into account for the 
fitting. The results for the scaling exponents are in accordance with 
the expectation for the critical case in (\ref{critscal}). In 
Fig.~\ref{mfKdVa1_5gaussmatch} we show the solution at the last 
recorded time together with a rescaled 
soliton (according to (\ref{resc}), 
the ratio of the maximum of $u$ and $Q$ respectively determines the factor 
$L^{\alpha/2}$). It can be seen that the blow-up profile is  given by 
the ground state though the blow-up has not yet been reached. Note 
that we obtain the same behavior for the $-$ sign in equation 
(\ref{fBBM}). 
\begin{figure}[htb!]
  \includegraphics[width=0.7\textwidth]{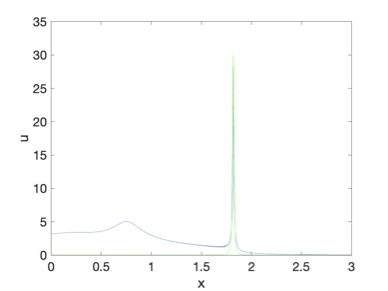}
\caption{Solution to equation (\ref{eq:main}) with the $+$ sign for 
 $\alpha=1$  for 
 the initial data $u(x,0)=5\exp(-x^{2})$ for $t=0.081$ and a rescaled 
 soliton according to (\ref{resc}) in green.}
 \label{mfKdVa1_5gaussmatch}
\end{figure}

As an example for the $L^{2}$ supercritical (but energy subcritical) 
case, we consider $\alpha=0.8$, the initial data 
$u(x,0)=3\exp(-x^{2})$ and $t\in[0,0.22]$. We use $N=2^{14}$ Fourier 
modes for $x\in2[-\pi,\pi]$ and $N_{t}=10^{4}$ time steps. 
The code is stopped for 
$t=0.2123$ since the relative energy conservation drops below 
$10^{-3}$ shortly afterwards. The solution at this time can be seen 
in Fig.~\ref{mfKdVa08_3gauss}. The profile of the blow-up is clearly 
different from the $L^{2}$ critical one in 
Fig.~\ref{mfKdVa1_5gaussmatch} .
\begin{figure}[htb!]
  \includegraphics[width=0.7\textwidth]{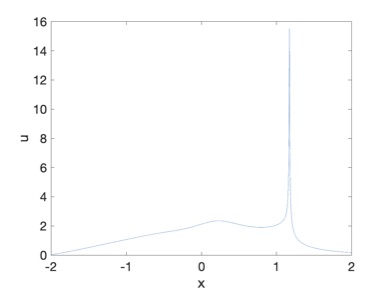}
\caption{Solution to equation (\ref{eq:main}) with the $+$ sign for 
 $\alpha=0.8$  for 
 the initial data $u(x,0)=3\exp(-x^{2})$ for $t=0.2123$.}
 \label{mfKdVa08_3gauss}
\end{figure}

If we fit the $L^{\infty}$ norm of the solution for the last 100 
recorded time steps (the 
results are similar for a slightly larger number of steps) according to
(\ref{fit}), we find $a = -0.27$, $b=1.01$, and $t^{*}=0.2140$. An 
analogous fitting for $\|u_{x}\|_{L^2}$ yields $a = -0.62$,    $b=0.14$, 
and $t^{*}=    0.2140$. Note the excellent agreement for the blow-up 
time. From (\ref{supcritscal}) one would expect in the former case 
for $a$ a value of $-0.222$, and in the latter $-0.5$. This means that 
the agreement with expectation is better for the $L^{\infty}$ norm, 
and that we are not close enough to the blow-up (on the rescaled time 
scales since the blow-up is expected to be exponential in $\tau$) for 
$\|u_{x}\|_{L^2}$. 

In the energy critical case $\alpha=0.5$ and the energy supercritical 
case $\alpha=0.4$, we consider the initial data 
$u(x,0)=2\exp(-x^{2})$ with the same numerical parameters. The codes 
break for $t=0.3281$ and $t=0.2674$ respectively in the sense that 
the relative energy conservation drops below $10^{-3}$. 
\begin{figure}[htb!]
  \includegraphics[width=0.49\textwidth]{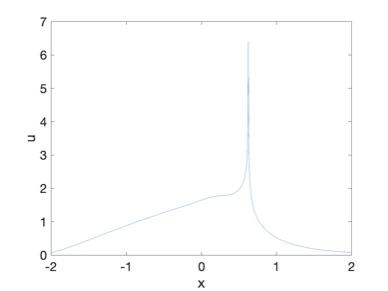}
  \includegraphics[width=0.49\textwidth]{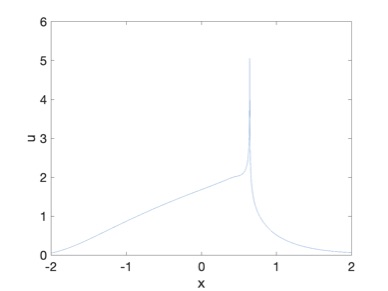}
\caption{Solutions to equation (\ref{eq:main}) with the $+$ sign for 
 the initial data $u(x,0)=2\exp(-x^{2})$ at the times where the code 
 breaks, on the left for $\alpha=0.5$, on the right for $\alpha=0.4$.}
 \label{mfKdVa05_2gauss}
\end{figure}

Fitting the norms for the last 100 recorded time steps to (\ref{fit}), 
we get in the 
case $\alpha=0.5$ for the  $L^{\infty}$ norm $a=-0.168$ (expected 
value $-1/6$) and for the $\|u_{x}\|_{L^2},$ $a=-0.47$ (expected value 
$-1/2$) with blow-up times of $0.3298$ and $0.3296$ respectively. 
Thus an excellent agreement between the blow-up times and the 
expectation for the scalings is observed. For $\alpha=0.4$, we get 
for the $L^{\infty}$ norm $a=-0.14$ (expected value $-0.1429$), 
and $a=-0.66$ (expected value $-0.5$) for $\|u_{x}\|_{L^2}$. In both cases the blow-up time is determined to be $0.2685$.

The above results can be summarized in the following 
\begin{conjecture}
    Consider  smooth initial data $u_{0}\in L^{2}(\mathbb{R})$ 
with a single hump for the focusing equation (\ref{eq:main}). Then for
    \begin{itemize}
\item $\alpha>1$: solutions to the focusing modified fKdV equations with the initial 
data $u_{0}$ stay smooth for 
all $t$. For large $t$ they decompose asymptotically into solitary 
waves and 
radiation.

\item $0<\alpha\leq 1$: solutions to the focusing modified fKdV equations with initial 
data $u_{0}$ of sufficiently small, but non-zero mass stay smooth for 
all $t$.

\item $\alpha=1$: solutions to the focusing modified fKdV equations  with the initial 
data $u_{0}$ with negative energy and mass larger than the solitary 
wave 
mass
blow up at finite time $t^{*}$. The type 
of the blow-up for $t\nearrow t^*$ is characterized by 
\begin{equation*}\label{uL2}
    u(x,t)\sim \frac{1}{\sqrt{L(t)}}Q\left(\frac{x-x_{m}}{L(t)}\right), \quad
    L = c_{0}(t^{*}-t),
\end{equation*}
where $c_{0}$ is a constant, and where $Q$ is the solitary wave 
solution (\ref{Qc}) for $c=1$. 

\item $0<\alpha<1$: solutions to the focusing modified fKdV equations  with the initial 
data $u_{0}$ and sufficiently large $L^{2}$ norm
blow up at finite time $t^{*}$ and finite $x=x^{*}$. 
The nature of blow-up
is self similar as given by (\ref{resc}), but the blow-up profile 
appears to be  
given by (\ref{profile}) with non-vanishing $a^{\infty}$.

 \end{itemize}
\end{conjecture}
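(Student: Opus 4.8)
The plan is to treat the four dispersion regimes separately, since each is governed by a different analytic mechanism, and I would begin with the two assertions that are within reach of current techniques. For $\alpha>1$ the equation is $L^2$-subcritical, so I would first upgrade the local theory in $H^s$, $s>3/2$, mentioned in the text to an a priori global bound on $\norm{u}_{H^{\alpha/2}}$ using the conserved mass $\norm{u}_{L^2}^2$ together with the Hamiltonian $H_\alpha$. The key input is the sharp Gagliardo--Nirenberg inequality whose optimizer is the ground state $Q$ from Proposition \ref{FL-SW}: in the subcritical range the quartic term in $H_\alpha$ is dominated by the kinetic term for any fixed mass, so $H_\alpha$ and $\norm{u}_{L^2}$ jointly control $\norm{D^{\alpha/2}u}_{L^2}$, and global existence follows by iterating local well-posedness. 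The same coercivity argument, now restricted to masses below $\norm{Q}_{L^2}^2$, yields the small-mass global existence claim in the critical and supercritical ranges $0<\alpha\le 1$, the threshold again being read off from the optimal Gagliardo--Nirenberg constant.

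For the $L^2$-critical case $\alpha=1$ (modified Benjamin--Ono) I would build on the minimal-mass blow-up solution of Martel and Pilod \cite{MP} and on the $H^{1/2}$ well-posedness of Kenig--Takaoka \cite{KT}. To reach the stated class (negative energy and mass strictly above $\norm{Q}_{L^2}^2$) the natural tool is a localized virial/monotonicity argument: one introduces modulation parameters adapted to the scaling \eqref{resc}, decomposes $u$ near the rescaled ground state, and shows that negative energy forces the scale $L(t)$ to collapse in finite time with the critical law $L\sim c_0(t^*-t)$, reproducing the self-similar profile $\tfrac{1}{\sqrt{L}}Q(\tfrac{\cdot-x_m}{L})$ predicted by \eqref{critscal}. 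The rigidity of the limiting profile --- that it is exactly the ground state, i.e.\ $a^\infty=0$ --- would come from a Liouville-type classification of the linearized flow around $Q$.

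The supercritical blow-up for $0<\alpha<1$ is structurally different, and I would attack it through the dynamic rescaling \eqref{eqresc}. Writing $a=-(\ln L)_\tau$, one seeks a stationary profile $U^\infty$ solving \eqref{profile} with $a^\infty\neq 0$, constructs it (variationally, or by continuation off the $\alpha=1$ limit), and then proves its finite-codimension stability through a spectral analysis of the linearization of \eqref{profile}; the expected exponential rate $L\sim e^{-\kappa\tau}$ then translates into the algebraic laws \eqref{supcritscal} after undoing the change of time.

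The hardest steps are concentrated in this last regime. First, for $\alpha<1$ there is as yet no satisfactory low-regularity Cauchy theory (the paper defers this), so the meaning of ``large $L^2$ data'' blow-up must be anchored to smoother data and then propagated. Second, and most seriously, it is not known that \eqref{profile} admits a localized solution when $a^\infty\neq 0$; establishing existence, uniqueness, and spectral nondegeneracy of this self-similar profile for the nonlocal operator $|D|^\alpha$ is the central obstruction, and it is precisely the point at which the numerics remain only suggestive. The soliton-resolution assertion for $\alpha>1$ is of comparable difficulty, requiring an asymptotic-stability and rigidity theory for the $L^2$-subcritical flow that is not presently available for this family of nonlocal equations; for these reasons I would expect the two global-existence statements to be provable now, while the blow-up and resolution statements remain genuine conjectures.
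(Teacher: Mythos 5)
The statement you were asked to prove is a \emph{conjecture} in the paper, and the paper offers no proof of it: its support consists entirely of numerical evidence (spectral simulations, fitting of $\|u\|_{L^\infty}$ and $\|u_x\|_{L^2}$ to the model laws (\ref{critscal})--(\ref{supcritscal}) via (\ref{fit}), fitting of Fourier coefficients to (\ref{mufit}), and matching of the near-blow-up profile against a rescaled ground state). Your text is likewise a research program rather than a proof, and you correctly flag the existence of a localized self-similar profile solving (\ref{profile}) with $a^\infty\neq 0$, and soliton resolution for $\alpha>1$, as genuinely open. The problem is that the two items you claim are ``provable now'' are not, at least not by the arguments you sketch, and the obstructions are stated in the paper itself.

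First, for $\alpha>1$ your coercivity argument does give an a priori bound on $\|u\|_{H^{\alpha/2}}$ (the Gagliardo--Nirenberg exponent $1/\alpha<1$ is subcritical), but global smoothness does not ``follow by iterating local well-posedness'': the only local theory available for the cubic equation is in $H^s(\R)$, $s>3/2$, while the energy regularity is $\alpha/2<1$ since $\alpha<2$. The paper states explicitly that no analogue of the low-regularity results of \cite{LPS2,MPV} is known for the cubic nonlinearity except at $\alpha=1$ (\cite{KK,KT}). Without local well-posedness at or below regularity $\alpha/2$, with a lifespan depending only on the $H^{\alpha/2}$ norm, the energy bound cannot be iterated; what it yields is only a global \emph{weak} solution, as in Proposition \ref{weak} for the defocusing case, which is far from ``stays smooth for all $t$.'' Second, for $0<\alpha<1$ the small-mass claim cannot be obtained by ``the same coercivity argument restricted to masses below $\|Q\|_{L^2}^2$'': in that range the relevant inequality is $\int_\R u^4\,dx\leq C_\alpha\big(\int_\R |D^{\alpha/2}u|^2dx\big)^{1/\alpha}\big(\int_\R u^2dx\big)^{2-1/\alpha}$ with $1/\alpha>1$, so for any fixed mass, however small, the quartic term dominates the kinetic term when the kinetic energy is large, and the Hamiltonian is not coercive. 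The paper records the fatal obstruction to any mass threshold: $\|Q_c\|_{L^2}^2=c^{(\alpha-1)/\alpha}\|Q\|_{L^2}^2$, so for $1/2<\alpha<1$ there exist solitary waves of arbitrarily small $L^2$ norm, and no mass cutoff can separate dispersive behavior from soliton-like concentration. Your threshold argument functions only at the $L^2$-critical point $\alpha=1$, where mass is scale-invariant and \cite{KT} provides local well-posedness in $H^{1/2}$; that single case is indeed essentially known, but it is the only piece of the conjecture within reach of the tools you invoke. (For the $\alpha=1$ blow-up item, note also that \cite{MP} constructs one minimal-mass blow-up solution; upgrading this to blow-up for \emph{all} negative-energy data of supercritical mass, with the rate $L=c_0(t^*-t)$, is of Martel--Merle--Rapha\"el type difficulty and is not a routine virial argument.)
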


\begin{remark}
	The numerical study of the appearance of singularities in 
	solutions to PDEs pushes all numerical methods to the limits, and 
	the results always have to be taken with a grain of salt. The 
	results we present in this paper are always stable with respect 
	to moderate changes of the numerical resolution. In addition we 
	fit in the case of shocks the Fourier coefficients and in the 
	case of $L^{\infty}$ blow-ups various norms. Nonetheless all 
	numerical results are with finite precision, and the analytical 
	phenomena to be studied can always be inaccessible within this 
	precision.
\end{remark}

\subsection{Numerical study of the defocusing case}

The same initial data as in Fig.~\ref{mfKdV15_5gauss} for the defocusing equation (\ref{eq:main}) 
lead only to dispersion as can be seen in Fig.~\ref{mfKdVm15_5gauss}. 
This is once more indication for the absence of stable structures in the 
defocusing case. 
\begin{figure}[htb!]
  \includegraphics[width=0.7\textwidth]{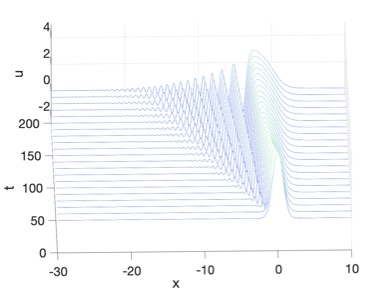}
 \caption{Solution to equation (\ref{eq:main}) with the $-$ sign for 
 $\alpha=1.5$ and 
 the initial data $u(x,0)=5\exp(-x^{2})$.}
 \label{mfKdVm15_5gauss}
\end{figure}

In the energy critical case $\alpha=0.5$, solutions to the defocusing 
equation (\ref{eq:main}) still appear to be dispersed. We use 
$N=2^{14}$ Fourier modes for $x\in 5[-\pi,\pi]$ and $N_{t}=10^{4}$ 
time steps for $t\in[0,0.05]$. As can be seen on the left of 
Fig.~\ref{mfKdVm05_5gauss}, there appears to be a strong gradient to 
the right of the initial hump, but the dispersion is still sufficient 
to create a DSW as is even more clear from the close-up of this zone 
on the right of the same figure. 
\begin{figure}[htb!]
  \includegraphics[width=0.49\textwidth]{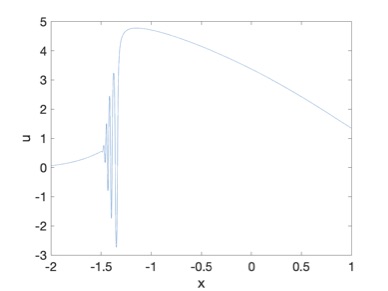}
  \includegraphics[width=0.49\textwidth]{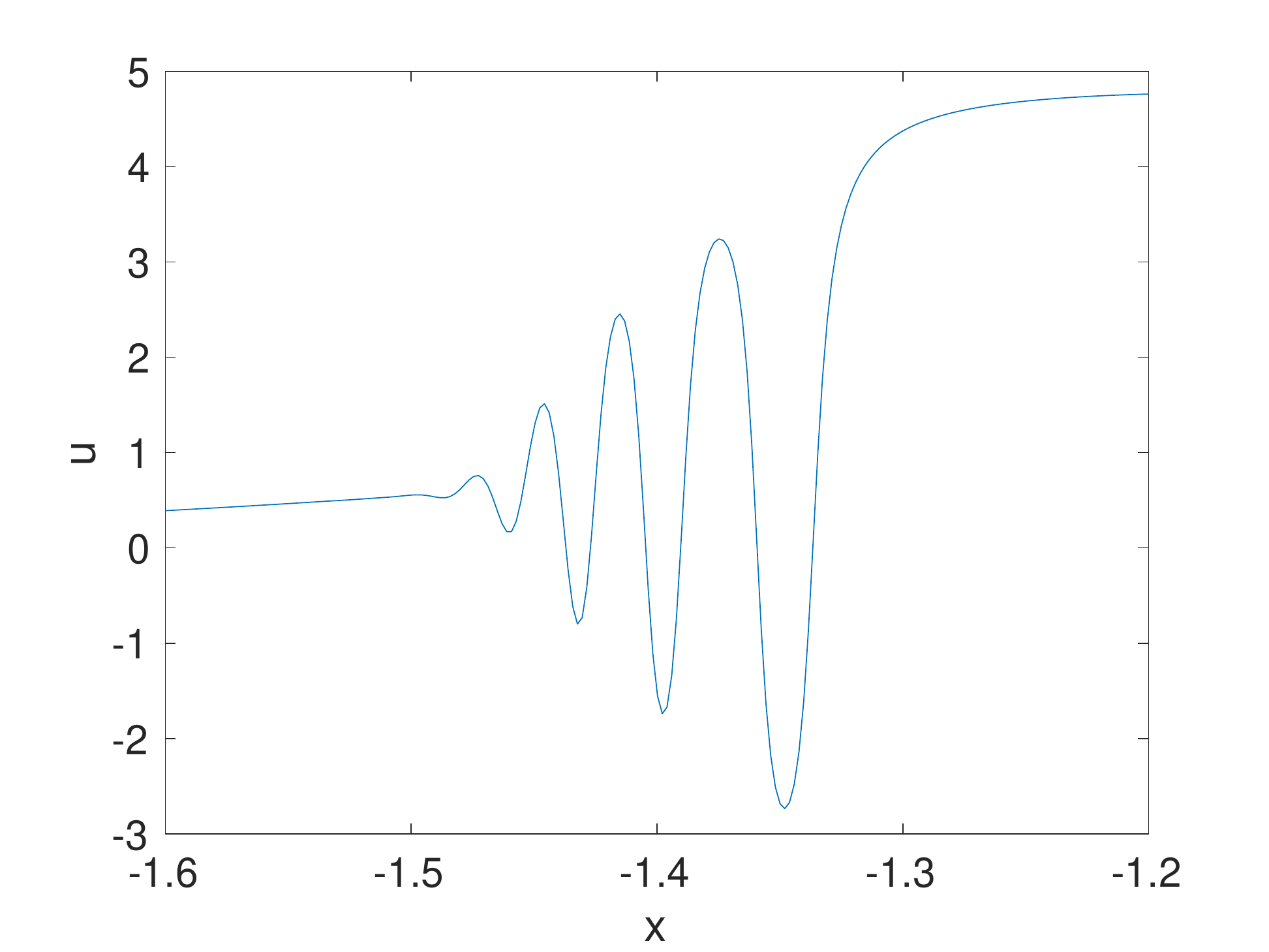}
 \caption{Solution to equation (\ref{eq:main}) with the $-$ sign for 
 $\alpha=0.5$  for 
 the initial data $u(x,0)=5\exp(-x^{2})$ at $t=0.05$ on the 
 left, and close-up of the oscillatory zone on the right.}
 \label{mfKdVm05_5gauss}
\end{figure}

In the energy supercritical case, for instance $\alpha=0.2$, the 
situation appears, however, to be similar to what has been seen for 
negative $\alpha$. If we consider the same initial data as in 
Fig.~\ref{mfKdVm05_5gauss} in this case, the code breaks for 
$t=0.332$, and the fitting of the Fourier coefficients on the right 
of Fig.~\ref{mfKdVm02_5gauss} indicates a 
$\mu\sim 0.33$, i.e., the formation of a cusp as can be seen in 
Fig.~\ref{mfKdVm02_5gauss}. Note that this phenomenon does not change 
if the code is rerun with the same numerical resolution for 
$x\in2[-\pi,\pi]$, i.e., with more than twice the resolution. But the 
situation appears to be similar to what has been observed in 
\cite{KSM} in the case of the defocusing fractional nonlinear 
Schr\"odinger equation. There an almost singular behavior was 
observed, which seemed to disappear when higher resolution had been 
used. 
\begin{figure}[htb!]
  \includegraphics[width=0.49\textwidth]{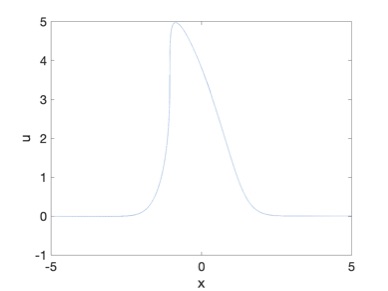}
  \includegraphics[width=0.49\textwidth]{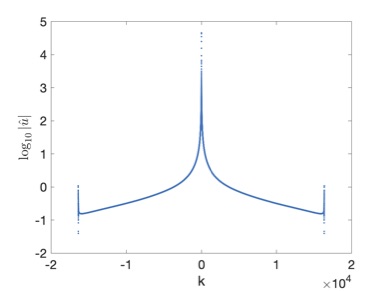}
 \caption{Solution to equation (\ref{eq:main}) with the $-$ sign for 
 $\alpha=0.2$  for 
 the initial data $u(x,0)=5\exp(-x^{2})$ at $t=0.0332$ on the left, 
 and the corresponding Fourier coefficients on the right.}
 \label{mfKdVm02_5gauss}
\end{figure}

The above behavior is confirmed by various norms of the solution. In 
Fig.~\ref{mfKdVm02_5gaussinf} it can be seen on the left that the 
$L^{\infty}$ norm of the solution is essentially constant and even 
decreases slightly. But the $L^{\infty}$ norm of the gradient on the 
right of the same figure appears to explode. This means both norms 
reflect the behavior of a shock of the solution. 
\begin{figure}[htb!]
  \includegraphics[width=0.49\textwidth]{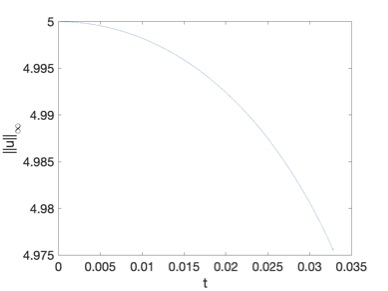}
  \includegraphics[width=0.49\textwidth]{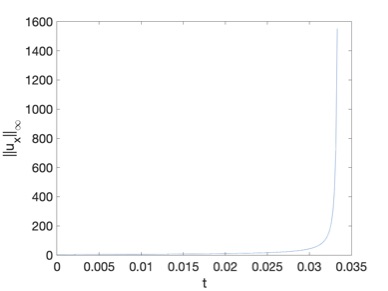}
 \caption{$L^{\infty}$ norm of the solution to equation (\ref{eq:main}) with the $-$ sign for 
 $\alpha=0.2$  for 
 the initial data $u(x,0)=5\exp(-x^{2})$ in dependence of time on the left, 
 and the $L^{\infty}$ norm of the gradient  on the right.}
 \label{mfKdVm02_5gaussinf}
\end{figure}

If we consider the same initial data for the defocusing equation 
(\ref{eq:main}) with $\alpha=0.1$, we get very similar results. The 
code is stopped for $t=0.033$ since the Fourier coefficients appear 
to show a completely algebraic decay for large $|k|$, and a fitting 
of the coefficients gives $\mu\sim 0.35$. The solution at this time 
can be seen in Fig.~\ref{mfKdVm01_5gauss} on the left. The $L^{\infty}$ 
norm of the gradient in dependence of time on the right of the same 
figure again indicates a shock. 
\begin{figure}[htb!]
  \includegraphics[width=0.49\textwidth]{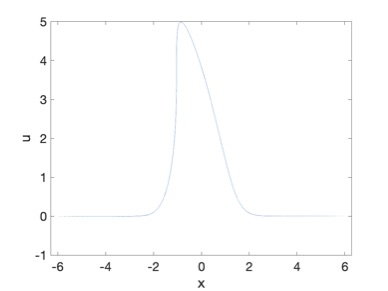}
  \includegraphics[width=0.49\textwidth]{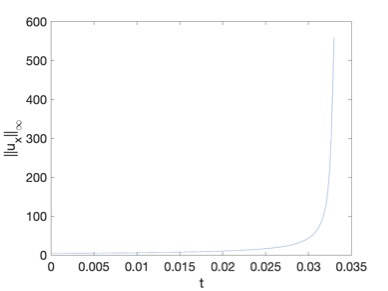}
 \caption{Solution to equation (\ref{eq:main}) with the $-$ sign for 
 $\alpha=0.1$  for 
 the initial data $u(x,0)=5\exp(-x^{2})$ for $t=0.033$ on the left, 
 and the $L^{\infty}$ norm of the gradient  on the right.}
 \label{mfKdVm01_5gauss}
\end{figure}

The above results can be summarized in the following 
\begin{conjecture}
	Consider smooth initial data $u_0\in L^{2}(\mathbb{R})$ with a 
	single hump for the defocusing equation (\ref{eq:main}). Then
	\begin{itemize}
		\item  Initial data with sufficiently small mass will be 
		dispersed, the solution is global in time.
	
		\item  For $\alpha\geq 1/2$, initial data of arbitrary mass lead 
		to solutions which will be dispersed and are 
		global in time. 
	
		\item  For $\alpha<1/2$, initial data of sufficiently large 
		mass will lead to the formation of a shock in finite time, a 
		singularity of the form $|x-x^{*}|^{1/3}$ for $|x|\sim 
		x^{*}$, where $^{*}=const$. 
	\end{itemize}
\end{conjecture}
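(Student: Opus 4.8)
The plan is to split the three regimes and treat the two global statements by conservation-law and dispersive arguments, isolating the shock statement as the genuinely difficult one. For the first bullet (small mass, all $\alpha>0$) I would set up a small-data global theory: since the nonlinearity $u^2u_x$ is cubic and the free group $e^{t|D|^\alpha\partial_x}$ has order $1+\alpha>1$, the associated $L^1\to L^\infty$ dispersive decay together with a bootstrap in a weighted Sobolev norm and the $Z$-norm of Proposition~\ref{th:previous} should close for data of sufficiently small mass and regularity. Because the defocusing cubic interaction is non-resonant at leading order, I expect a modified-scattering statement entirely analogous to Proposition~\ref{th:previous}; the one new ingredient is the stationary-phase analysis of the cubic resonance function for $\alpha>0$, which is structurally the same as in the $\alpha\in(-1,0)$ case of \cite{SW1}.

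For the second bullet (energy-subcritical range $\alpha\ge 1/2$, arbitrary mass) I would use the defocusing Hamiltonian $H_\alpha(u)=\tfrac12\int_\R\big(|D^{\alpha/2}u|^2+\tfrac16 u^4\big)\,dx$, which is now coercive. Since $\alpha\ge 1/2$ the embedding $H^{\alpha/2}(\R)\hookrightarrow L^4(\R)$ holds, so conservation of mass and of $H_\alpha$ yields a uniform-in-time a priori bound on $\|u(t)\|_{H^{\alpha/2}}$. Combined with a local well-posedness theory at the energy regularity---known for $\alpha=1$ by Kenig--Takaoka \cite{KT} and expected in the full range from the dispersive smoothing that enlarges the resolution space---this gives global well-posedness, the critical endpoint $\alpha=1/2$ requiring the extra care familiar from energy-critical problems. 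That the global solution actually disperses rather than concentrating is consistent with the absence of solitary waves in the defocusing case (Proposition~\ref{nonex}); upgrading ``dispersed'' to genuine scattering would demand a Morawetz/virial estimate, which I would attempt through a monotonicity identity in the spirit of the Pohozaev computation used there.

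The third bullet (shock formation for $\alpha<1/2$ and large mass) is the main obstacle and, I believe, genuinely open. The natural first attempt is to import the particle-path machinery of Theorem~\ref{th:main}: with $v_0=u\circ X$, $v_1=\partial_xu\circ X$ along $\dot X=u^2$, one has $\dot v_1+2v_0v_1^2+K_1=0$, and one would like the Riccati term to drive $v_1\to-\infty$ in finite time while $K_1$ stays subordinate. This fails for $\alpha>0$ because $|D|^\alpha\partial_x$ is then no longer the sign-definite singular integral exploited for $-1<\alpha<0$ but a genuinely dispersive operator of positive order $1+\alpha$; its nonlocal contribution $K_1$ cannot be absorbed by negative powers of $q(t)$ as in Lemma~\ref{le:fKdV-nonlocal}, and the dispersion generically regularizes the gradient (recall that the numerics produce a dispersive shock wave rather than a cusp already in the mildly dispersive focusing case $\alpha=-0.2$).

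What I can instead propose is a matched-asymptotics strategy. The predicted cusp exponent $|x-x^*|^{1/3}$ is exactly the generic shock profile of the dispersionless equation $u_t-u^2u_x=0$, which suggests that near the first singular time the solution concentrates on a length scale over which $|D|^\alpha\partial_x$ is formally negligible against the nonlinear steepening---and this is precisely where the energy-supercriticality $\alpha<1/2$ should enter, ensuring that the dispersive length scale loses to the nonlinear one. Making this rigorous would require a modulation/renormalization analysis around the self-similar inviscid Burgers profile, together with a proof that the dispersive correction remains a genuine perturbation up to the blow-up time; I expect this last step---controlling the feedback of the positive-order dispersion on the forming cusp---to be the crux of the difficulty.
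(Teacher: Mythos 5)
The statement you were asked to address is a \emph{conjecture}: the paper does not prove it, and its entire support is numerical. Specifically, the paper runs spectral simulations of the defocusing equation and fits the Fourier coefficients in the manner of Sulem--Sulem--Frisch: for $\alpha=0.5$ (energy critical) it observes a dispersive shock wave and global smoothness, while for $\alpha=0.2$ and $\alpha=0.1$ the code breaks with fitted cusp exponents $\mu\approx 0.33$ and $\mu\approx 0.35$, a bounded $L^\infty$ norm and an exploding gradient norm --- i.e.\ precisely the $|x-x^*|^{1/3}$ shock asserted in the third bullet (with the paper itself cautioning that a similar near-singular behavior in the fractional NLS setting of \cite{KSM} disappeared at higher resolution). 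So there is no proof in the paper to compare yours against; what can be judged is whether your analytical program is sound and whether you correctly identify its status. On the third bullet you do: you are right that it is genuinely open, your explanation of why the particle-path/Riccati machinery of Theorem \ref{th:main} cannot be transplanted to $\alpha>0$ (the kernel bounds of Lemma \ref{le:fKdV-nonlocal} exploit the negative order of the dispersion, and $K_1$ can no longer be subordinated to powers of $q(t)$) is correct, and your matched-asymptotics heuristic around the self-similar Burgers profile is exactly the mechanism the numerics suggest.

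Where you overstate matters is in the first two bullets. For the second bullet your chain is: coercivity of the defocusing Hamiltonian plus mass conservation gives an a priori $H^{\alpha/2}$ bound, which combined with local well-posedness at energy regularity yields global well-posedness. The missing link is precisely that local theory: it is known only at $\alpha=1$ (Kenig--Takaoka \cite{KT}), and the paper explicitly declines to develop a low-regularity local theory for other $\alpha$; this is why the paper can only produce global \emph{weak} solutions by compactness (Proposition \ref{weak}) and states strong global well-posedness for $\alpha>1/2$ as itself conjectural. The step you label ``expected'' is the actual open problem, not a technicality. Likewise, ``dispersed'' is a scattering-type assertion: it does not follow from the absence of solitary waves (Proposition \ref{nonex}), and no Morawetz/virial monotonicity is currently available for these nonlocal operators, so your closing caveat there should be promoted to the principal obstruction. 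For the first bullet your small-data scheme is plausible, and indeed the paper's final Remark cites exactly such a result for $0<\alpha<1$ as recently proven (\cite{SW3}, in preparation); note, however, that the resonance analysis is not ``structurally the same'' as in the $\alpha\in(-1,0)$ case of Proposition \ref{th:previous}: the group velocity $(1+\alpha)|\xi|^{\alpha}$ now degenerates at $\xi=0$ rather than at infinity, which changes the stationary-phase and weighted-norm structure of the argument.
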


\begin{remark}
The global existence and scattering for small solutions in the case $0<\alpha<1$ has been recently proven (\cite{SW3}).
\end{remark}

 

 \section{The BBM version}
 We comment here briefly on the BBM version of the modified fKdV equation, that is 
 
 \begin{equation}\label{fBBM}
 u_t+u_x+|D|^\alpha u_t\pm u^2u_x=0,
 \end{equation}
 which makes sense for $\alpha>0$, and we will restrict to $0<\alpha\leq 1.$
 
 For any $\alpha>0$ the energy
 $$E(t)=\frac{1}{2}\int_\R (u^2+|D^{\alpha/2}u|^2)\, dx$$
 is formally conserved.  By  a standard compactness method this 
 implies that the Cauchy problem for \eqref{fBBM} admits a global weak 
 solution in $L^{\infty}(\R: H^{\alpha/2}(\R))$ for any initial data in $H^{\alpha/2}(\R)$ when $\alpha>1/3$ (this condition ensures the compactness of the embedding $H^{\alpha/2}(\R)\subset L^3_{\text{loc}}(\R)$).

 One can also use the equivalent form
 
 \begin{equation*}
 \partial_t u+\partial_x(I+D^\alpha)^{-1}\left(u\pm \frac{u^3}{3}\right)=0,
 \end{equation*}
 which gives the Hamiltonian formulation
 $$u_t+J_\alpha\nabla_uH(u)=0,$$
 where the skew-adjoint operator $J_\alpha$ is given by $J_\alpha=\partial_x(I+D^\alpha)^{-1}$
 and 
  $$H(u)=\frac{1}{2}\int_\R\bigg(u^2\pm\frac{1}{6}u^4\bigg)\, dx.$$
  
  Note that the Hamiltonian $H(u)$ makes sense for $u\in H^{\alpha/2}(\R)$ (due to the energy \(E(t)\) and Sobolev embedding) if and only if $\alpha\geq 1/2$.
  
  As noticed in \cite{LPS2} for the quadratic fBBM equation, when $\alpha\geq 1$ \eqref{fBBM} is an ODE in the Sobolev space $H^s(\R), s>1/2$, which by standard arguments yields the local well-posedness of the Cauchy problem in $H^s(\R), s>1/2$.  When $\alpha=1$ (the BBM version of the modified Benjamin-Ono equation), the conservation of energy and an ODE argument (see\cite{JCS}) {\it \`a la Brezis-Gallouet} implies that this solution is in fact global.
  
  The situation is more delicate when $0<\alpha<1.$ Concerning the local 
  Cauchy problem, the local well-posedness in   $H^s(\R), s>3/2$ is 
  trivial. As in the quadratic case (see \cite{LPS2})  a local theory 
  in $H^{3/2-s_\alpha}(\R), s_\alpha>0$ can be carried out  but we 
  will focus here on the global issues. Although the  global 
  existence of small solutions is expected, things are less clear for 
  the  behavior of large solutions (global existence versus finite 
  time blow-up), and we will rely on numerical simulations, for 
  $0<\alpha<1/3$ and $1/3<\alpha<1,$ for both signs in (\ref{fBBM}).

  Equation (\ref{fBBM}) has solitary waves of the form 
  $u(x,t)=\tilde{Q}(\xi)$, $\xi=x-ct$ satisfying 
  \begin{equation}\label{QfBBM}
  	(1-c)\tilde{Q}-c|D|^{\alpha}\tilde{Q}\pm 
	\frac{1}{3}\tilde{Q}^{3}=0.
  \end{equation}
  One has for $c>1$ and the $+$ sign in (\ref{QfBBM})
  \begin{equation}\label{Q+}
  	\tilde{Q}(x) = \sqrt{c-1}Q(\beta^{1/\alpha}x),\quad \beta = \frac{c-1}{c},
  \end{equation}
  where $Q$ is the solution of (\ref{sol}) for $c=1$, and for $c<-1$ 
  and the $-$ sign in (\ref{QfBBM})
  \begin{equation}\label{Q-}
  	\tilde{Q}(x) = \sqrt{1-c}Q(\beta^{1/\alpha}x),\quad \beta = \frac{c-1}{c}.
  \end{equation}

  Thus for equation (\ref{fBBM}) solitary waves are expected for both 
  signs in front of the nonlinearity. They are propagating to the 
  right for the $+$ sign, and to the left for the $-$ sign.

  \begin{remark}
  We refer to \cite{pava} for a rather complete analysis of the stability of solitary waves to the fBBM equation:
  
  $$u_t+u_x+|D|^\alpha u_t+uux=0.$$
   \end{remark}
  
 For the numerical computations below we use 
  $N=2^{16}$ Fourier modes for the shown domains in $x$ and $N_{t}=10^{4}$ 
  time steps for the given time intervals. Note that in all cases 
  discussed below, initial data of small enough mass are simply 
  dispersed towards infinity. We do not show examples for this since 
  they are very similar to what has been shown in this context 
  before. 
  
  As a first example we address the case 
  $\alpha=0.8$ where solitary waves are known to exist. We consider 
  the initial data $u(x,0)=5\exp(-x^{2})$. The solution to 
  (\ref{fBBM}) for the $+$ 
  sign can be seen in Fig.~\ref{mfBBMa08_5gausswater}. 
  A larger solitary wave appears to form and is propagating to the right. 
  \begin{figure}[htb!]
  \includegraphics[width=0.7\textwidth]{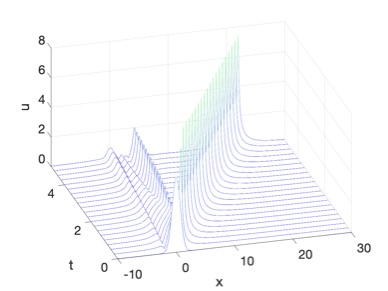}
 \caption{Solution to equation (\ref{fBBM}) with the $+$ sign for 
 $\alpha=0.8$  for 
 the initial data $u(x,0)=5\exp(-x^{2})$.}
 \label{mfBBMa08_5gausswater}
\end{figure}

The $L^{\infty}$ norm of the solution on the left of 
Fig.~\ref{mfBBMa08_5gaussinf} is in accordance with the 
interpretation that at least one large solitary wave appears. On the 
right of the same figure we show the solution at the final time 
together with a fitted solitary wave according to (\ref{Q+}). It can 
be seen that the agreement is already excellent. 
\begin{figure}[htb!]
  \includegraphics[width=0.49\textwidth]{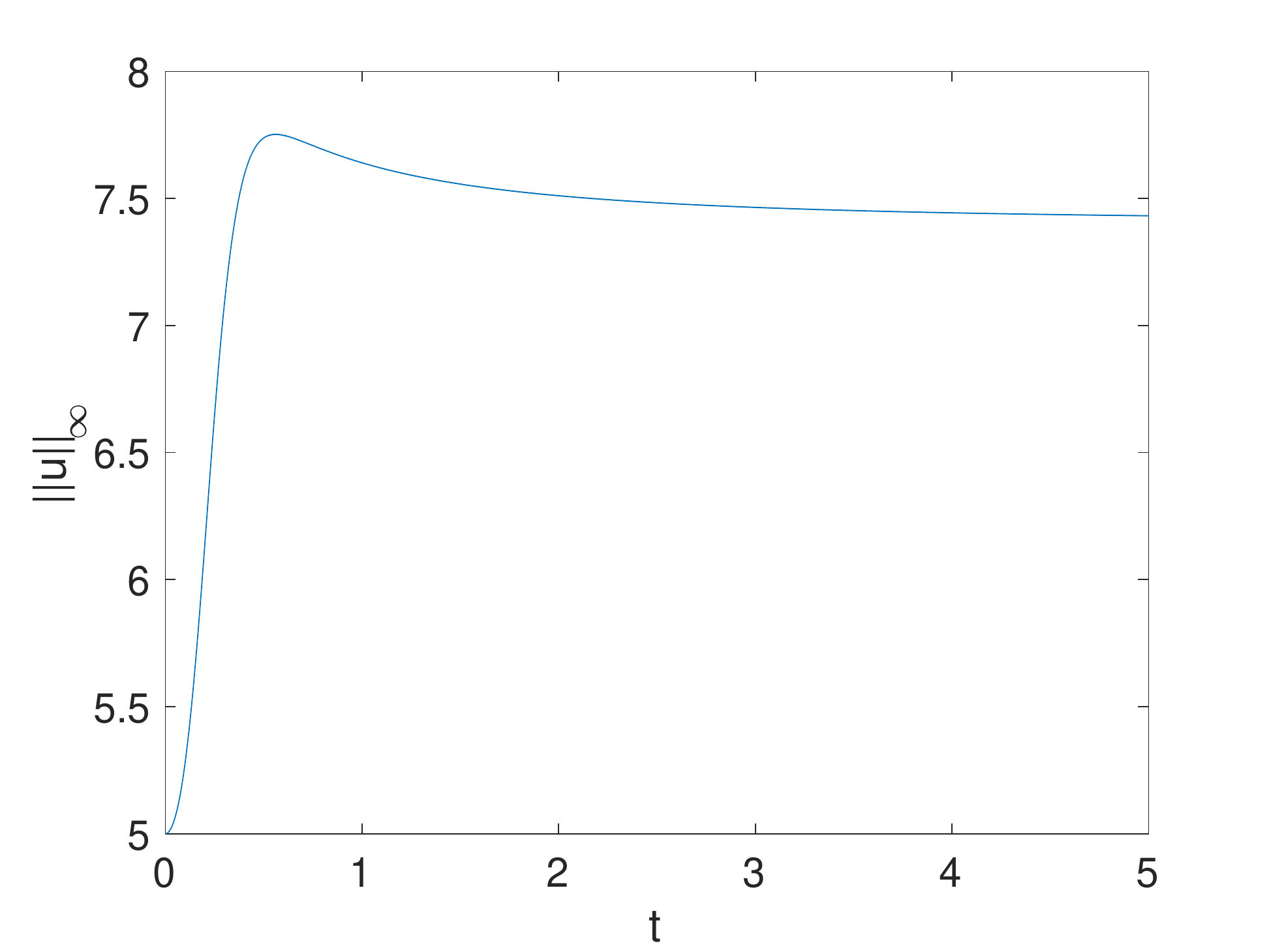}
  \includegraphics[width=0.49\textwidth]{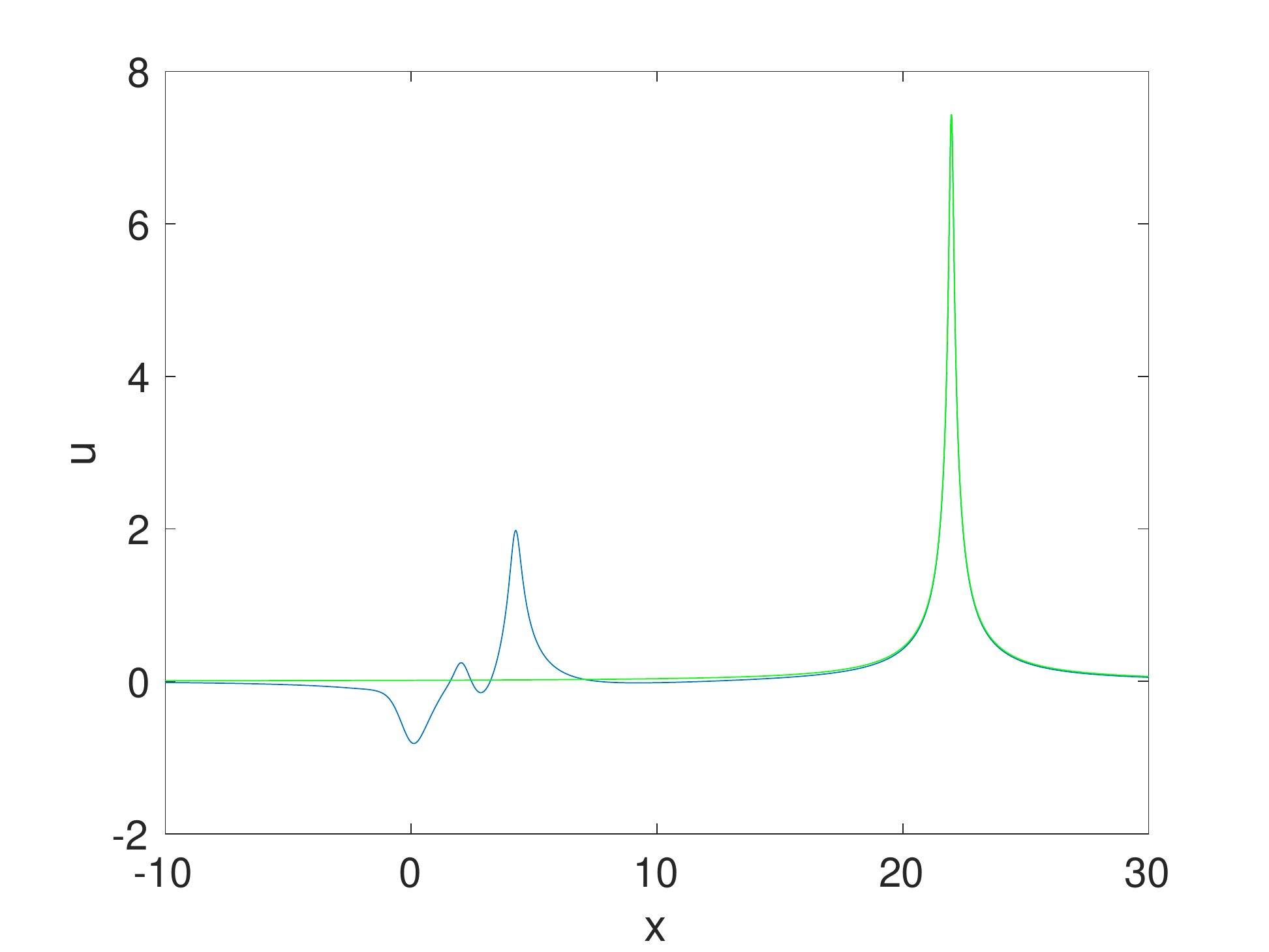}
 \caption{$L^{\infty}$ norm of the solution to equation (\ref{fBBM}) with the $+$ sign for 
 $\alpha=0.8$  for 
 the initial data $u(x,0)=5\exp(-x^{2})$  on the left, and the 
 solution at the final time together with a fitted solitary wave 
 according to (\ref{Q+}) in green on the right.}
 \label{mfBBMa08_5gaussinf}
\end{figure}

The same initial data and $\alpha=0.8$ for the $-$ sign in 
(\ref{fBBM}) lead to the solution in 
Fig.~\ref{mfBBMma08_5gausswater}. Again at least one solitary wave forms 
which travels to the left this time. 
  \begin{figure}[htb!]
  \includegraphics[width=0.7\textwidth]{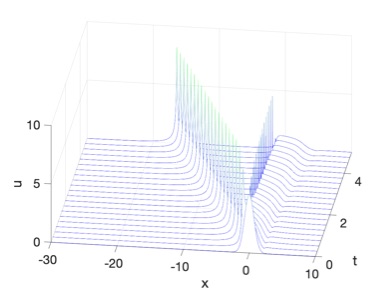}
 \caption{Solution to equation (\ref{fBBM}) with the $-$ sign for 
 $\alpha=0.8$  for 
 the initial data $u(x,0)=5\exp(-x^{2})$.}
 \label{mfBBMma08_5gausswater}
\end{figure}

This is once more confirmed by the $L^{\infty}$ norm of the solution 
on the left of Fig.~\ref{mfBBMma08_5gaussinf}, and even more so by 
the solution at the final time on the right of the same figure 
together with a fitted solitary wave according to (\ref{Q-}). This 
shows that the soliton resolution conjecture applies to both the 
equation (\ref{fBBM}) with the plus and the minus sign. In particular 
this implies that the solitons are stable for $\alpha>0.5$. 
\begin{figure}[htb!]
  \includegraphics[width=0.49\textwidth]{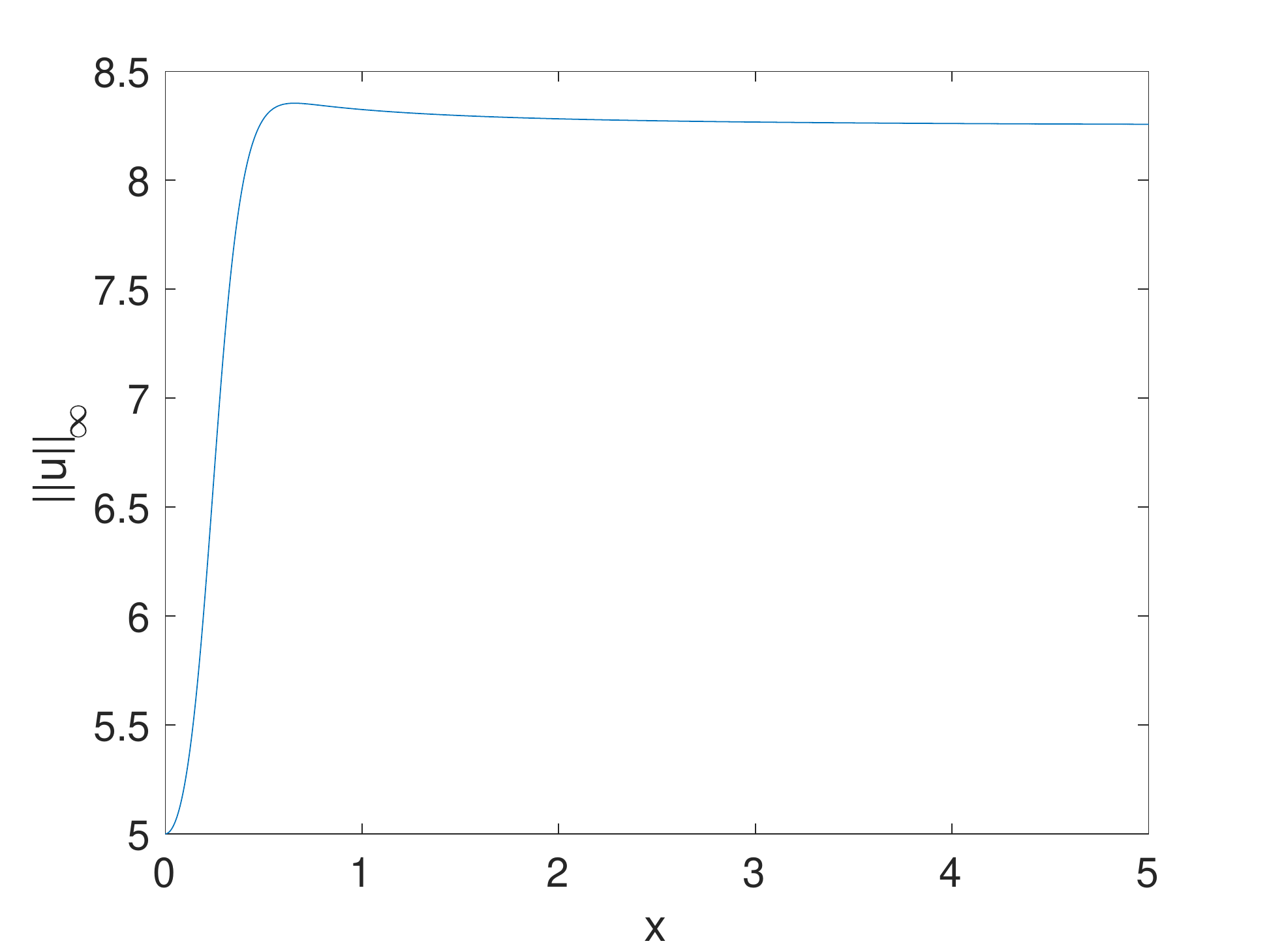}
  \includegraphics[width=0.49\textwidth]{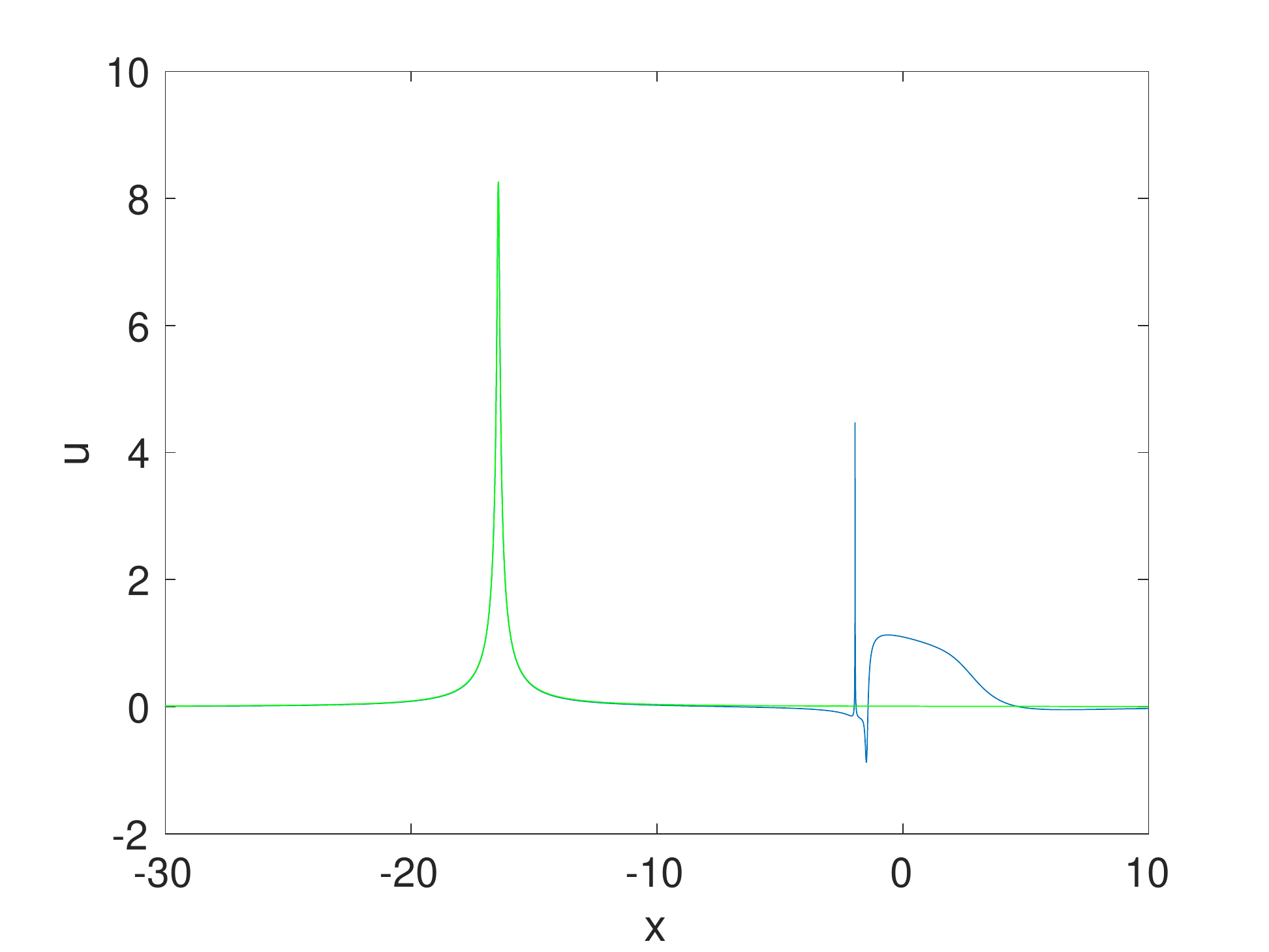}
 \caption{$L^{\infty}$ norm of the solution to equation (\ref{fBBM}) 
 with the $-$ sign for  $\alpha=0.8$  for 
 the initial data $u(x,0)=5\exp(-x^{2})$  on the left, and the 
 solution at the final time together with a fitted solitary wave 
 according to (\ref{Q-}) in green on the right.}
 \label{mfBBMma08_5gaussinf}
\end{figure}

  For $0<\alpha<1/3$, we consider as an example $\alpha=0.2$ and the 
  initial data $u(x,0)=\exp(-x^{2})$.  We find that the code breaks for $t = 
  2.5713$ since a fit of the Fourier coefficients according to 
  (\ref{mufit}) indicates the formation of a singularity with 
  $\mu\sim0.19$. The solution at this time can be seen in 
  Fig.~\ref{mfBBMalpha02gauss} on the left. The $L^{\infty}$ norm of the 
  solution in dependence of time is shown on the right of the same 
  figure. It grows until the code breaks, but as mentioned the 
  Fourier coefficients indicate that a cusp appears at this time with 
  finite $L^{\infty}$ norm. This is similar to what was observed for 
  the fBBM equation in \cite{KS}. 
 \begin{figure}[htb!]
  \includegraphics[width=0.49\textwidth]{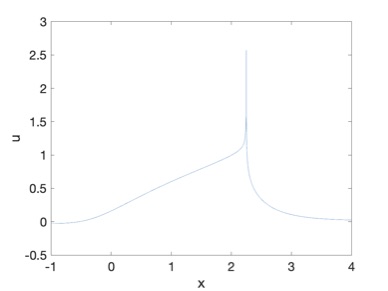}
  \includegraphics[width=0.49\textwidth]{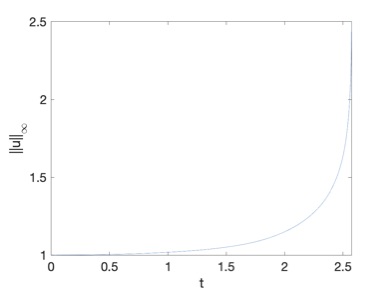}
 \caption{Solution to equation (\ref{fBBM}) with the $+$ sign for 
 $\alpha=0.2$  for 
 the initial data $u(x,0)=\exp(-x^{2})$ for $t=2.5713$ on the left, 
 and the $L^{\infty}$ norm in dependence of time   on the right.}
 \label{mfBBMalpha02gauss}
\end{figure}

The situation for the same initial data is different in the case of 
(\ref{fBBM}) with the $-$ sign. Here we get a 
dispersive shock wave as can be seen in 
Fig.~\ref{mfBBMalpham02gauss}. There is no indication of the 
formation of a singularity in this case. 
\begin{figure}[htb!]
  \includegraphics[width=0.49\textwidth]{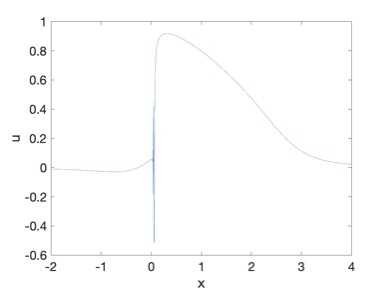}
  \includegraphics[width=0.49\textwidth]{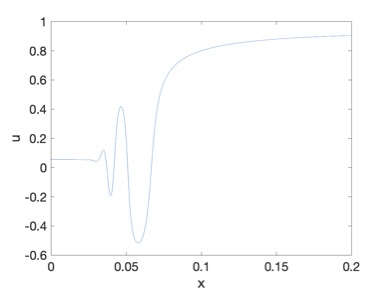}
 \caption{Solution to equation (\ref{fBBM}) with the $-$ sign for 
 $\alpha=0.2$  for 
 the initial data $u(x,0)=\exp(-x^{2})$ for $t=3$ on the left, and a 
 close-up of the DSW on the right.}
 \label{mfBBMalpham02gauss}
\end{figure}

However, for initial data with slightly larger norm, the situation is 
as in the focusing case in Fig.~\ref{mfBBMalpha02gauss}. If we 
consider the initial data $u(x,0)=1.5\exp(-x^{2)}$, then the code breaks 
for $t=1.0174$ since a fitting of the Fourier coefficients according 
to (\ref{mufit}) indicates a cusp ($\mu\sim0.17$). The solution for 
$t=1.014$ can be seen on the left of Fig.~\ref{mfBBMalpha0215gauss}. 
The $L^{\infty}$ norm of the gradient on the right of the same figure 
also indicates the formation of a cusp.
\begin{figure}[htb!]
  \includegraphics[width=0.49\textwidth]{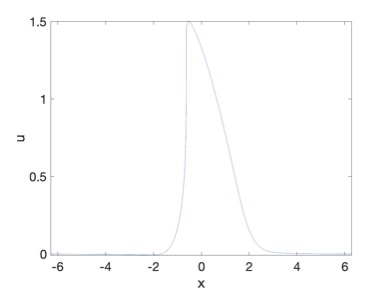}
  \includegraphics[width=0.49\textwidth]{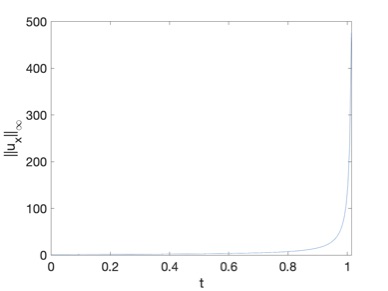}
 \caption{Solution to equation (\ref{fBBM}) with the $-$ sign for 
 $\alpha=0.2$  for 
 the initial data $u(x,0)=1.5\exp(-x^{2})$ for $t=1.014$ on the left, 
 and the $L^{\infty}$ norm of $u_{x}$ in dependence of time   on the right.}
 \label{mfBBMalpha0215gauss}
\end{figure}

 The above results can be summarized in the following 
\begin{conjecture}
	Consider smooth initial data $u_{0}\in L^{2}(\mathbb{R})$ with a single
hump for the equation (\ref{fBBM}). Then
\begin{itemize}
	\item  Initial data of sufficiently small mass will be dispersed, 
	the solutions are global in time for $0<\alpha<1$.

	\item  For $\alpha\geq1/3$, the solutions are global in time. 
	\item For $\alpha>1/2$ the long time behavior of initial data of  sufficiently large  mass is 
	characterized by solitary waves and radiation. 
	\item  For $\alpha<1/3$, initial data of sufficiently large mass 
	can lead to the formation of a cusp in finite time. 

\end{itemize}
\end{conjecture}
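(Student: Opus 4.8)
The plan is to treat the four regimes separately, since they require different tools, and to be explicit about which pieces are provable and which are the genuine obstacles that force the statement to remain a conjecture. I would start with the global-existence assertion for $\alpha\ge 1/3$. The natural starting point is the conserved energy $E(t)=\frac{1}{2}\int_\R(u^2+|D^{\alpha/2}u|^2)\,dx$, which bounds $\|u(t)\|_{H^{\alpha/2}}$ uniformly in $t$, together with the Hamiltonian form $u_t+\partial_x(I+D^\alpha)^{-1}(u\pm u^3/3)=0$. Since the multiplier $\mathrm{i}\xi/(1+|\xi|^\alpha)$ of $\partial_x(I+D^\alpha)^{-1}$ grows only like $|\xi|^{1-\alpha}$ at high frequency, the nonlinear term loses at most $1-\alpha<1$ derivatives, and a Galerkin/compactness scheme---exactly the one recalled just before \eqref{fBBM}, using the compact embedding $H^{\alpha/2}\hookrightarrow L^3_{\mathrm{loc}}$ available for $\alpha>1/3$---produces a global weak solution (the endpoint $\alpha=1/3$ being borderline). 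The obstruction to upgrading this to a global \emph{well-posed} flow is that local well-posedness is only known down to $H^{3/2}$ (or marginally below), a threshold that lies strictly above the energy space $H^{\alpha/2}$ throughout $0<\alpha\le1$; hence energy control cannot be recycled into persistence of regularity, and I expect only global weak existence, not uniqueness or smoothness, for large data.

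For the small-mass dispersion claim on the full range $0<\alpha<1$ I would follow the scheme already applied to \eqref{eq:main} in \cite{SW1,SW3}: expand the solution by Duhamel against the linear group $e^{-\mathrm{i}t\,\xi/(1+|\xi|^\alpha)}$, read off time decay from the weak dispersion of this phase by stationary phase, and close a bootstrap in a weighted Sobolev plus $Z$-type norm adapted to the cubic nonlinearity. Because the phase velocity $\xi/(1+|\xi|^\alpha)$ is bounded, the dispersion degenerates at high frequency and the decay is slow, so a normal-form / space-time resonance correction is needed to remove the nonresonant cubic interactions; the expected output is global existence with (modified) scattering for data of small $Z$- and weighted-Sobolev norm, in line with \cite{SW3}.

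The cusp-formation assertion for $\alpha<1/3$ and large mass is the part where I would try hardest to import the particle-path machinery behind Theorem \ref{th:main}, but it is here that the main obstacle appears. In the non-BBM equations the transport term $u^2u_x$ is local, which is precisely what produces the clean Riccati equation $\dot v_1+2v_0v_1^2+K_1=0$ and the blow-up of $\inf v_1$; in \eqref{fBBM}, however, the quasilinear nonlinearity sits inside the regularizing inverse $(I+D^\alpha)^{-1}$, so eliminating $u_t$ returns $u^2u_x$ entangled with a nonlocal remainder and no clean transport structure survives. A workable plan is to write the flux as $\partial_x(I+D^\alpha)^{-1}(u^3/3)$, whose convolution kernel differentiates like $|D|^{1-\alpha}$, and to isolate a leading quasilinear piece of order $1-\alpha$ that for $\alpha<1/3$ is strong enough to steepen along characteristics while the residual nonlocal term stays subquadratic in $m(t)$, in analogy with \eqref{4} and Lemma \ref{le:fKdV-nonlocal}; making this decomposition rigorous, rather than heuristic, is the crux.

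Finally, the soliton-resolution statement for $\alpha>1/2$ is, in my view, out of reach by present techniques and is the reason the whole assertion is phrased as a conjecture: a rigorous proof would require a concentration-compactness-plus-rigidity analysis (channels of energy and a Liouville theorem near the ground state \eqref{QfBBM}--\eqref{Q+}) of a kind unavailable even for the closely related fKdV and modified Benjamin-Ono models, so I would document it only through the numerics and the stability results of \cite{pava}.
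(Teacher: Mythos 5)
You have correctly recognized that this statement is a conjecture, and your partition of it into a provable core and genuinely open parts matches the paper's own treatment. The one rigorous ingredient is identical in both: formal conservation of the energy $E(t)=\frac{1}{2}\int_\R (u^2+|D^{\alpha/2}u|^2)\,dx$ plus a compactness argument based on the compact embedding $H^{\alpha/2}(\R)\subset L^3_{\mathrm{loc}}(\R)$, valid for $\alpha>1/3$, yielding global \emph{weak} solutions in $L^\infty(\R: H^{\alpha/2}(\R))$; you rightly flag both the strictness of the inequality (so the endpoint $\alpha=1/3$ of the second bullet is not actually covered) and the fact that weak existence falls short of the conjectured globality-with-regularity, since the energy space lies below every available local well-posedness threshold for $0<\alpha<1$. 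Where you diverge from the paper is in how the remaining bullets are supported: the paper's evidence is entirely numerical --- fitting Fourier coefficients to (\ref{mufit}) to detect cusp formation ($\mu\sim0.19$ for $\alpha=0.2$ with the $+$ sign, $\mu\sim0.17$ for the $-$ sign with larger data) and fitting solitary waves according to (\ref{Q+}) and (\ref{Q-}) for $\alpha=0.8$ to support soliton resolution --- whereas you sketch analytical programs (modified scattering in the spirit of \cite{SW1,SW3}, and a quasilinear decomposition of the flux $\partial_x(I+D^\alpha)^{-1}(u^3/3)$) that you honestly mark as incomplete. Your diagnosis of why the particle-path machinery of Theorem \ref{th:main} cannot be transplanted to (\ref{fBBM}) --- the nonlinearity sits behind the smoothing operator $(I+D^\alpha)^{-1}$, so no Riccati equation for $\partial_x u$ along characteristics survives --- is precisely the unstated reason the paper resorts to numerics for the $\alpha<1/3$ bullet. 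The only ingredients of the paper you omit are marginal for the stated ranges: the ODE interpretation of (\ref{fBBM}) in $H^s(\R)$, $s>1/2$, when $\alpha\geq 1$, the Brezis-Gallouet global argument at $\alpha=1$, and the role of the stability analysis of \cite{pava} (which you do cite) in making the $\alpha>1/2$ bullet plausible.
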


 \vspace{0.5cm}
\noindent {\bf Acknowledgments.} This work   was partially  supported 
by the ANR project ANuI (ANR-17-CE40-0035-02).  CK's work is partially supported by 
the isite BFC project 
NAANoD, the EIPHI Graduate School (contract ANR-17-EURE-0002), by the 
European Union Horizon 2020 research and innovation program under the 
Marie Sklodowska-Curie RISE 2017 grant agreement no. 778010 IPaDEGAN, 
and the EITAG project funded by the FEDER de Bourgogne, the region 
Bourgogne-Franche-Comt\'e and the EUR EIPHI.

\end{document}